\documentclass[a4paper,11pt,reqno]{amsart}

\usepackage[margin=1in,includehead,includefoot,asymmetric,marginparwidth=1cm]{geometry}
\usepackage[utf8]{inputenc}
\usepackage[T1]{fontenc}
\usepackage{lmodern,microtype}
\usepackage{hyperref}
\usepackage{url}
\usepackage{booktabs}
\usepackage[table]{xcolor}
\usepackage{mathtools,amssymb,amsthm}
\usepackage[foot]{amsaddr}
\usepackage{graphicx}
\usepackage{enumitem}
\usepackage[textsize=footnotesize,disable]{todonotes}

\graphicspath{{./graphics/}}

\newcommand{\cC}{\mathcal{C}}

\newcommand{\cM}{\mathcal{M}}
\newcommand{\cN}{\mathcal{N}}
\newcommand{\cO}{\mathcal{O}}
\newcommand{\cP}{\mathcal{P}}
\newcommand{\cT}{\mathcal{T}}
\newcommand{\cX}{\mathcal{X}}
\newcommand{\cY}{\mathcal{Y}}
\newcommand{\cZ}{\mathcal{Z}}
\DeclareMathOperator{\rev}{rev}
\DeclareMathOperator{\id}{id}
\DeclareMathOperator{\ord}{ord}

\newtheorem{theorem}{Theorem}
\newtheorem{corollary}[theorem]{Corollary}
\newtheorem{lemma}[theorem]{Lemma}

\newtheorem{question}[theorem]{Question}

\definecolor{defblue}{rgb}{0.1,0.1,0.7}
\newcommand{\defi}[1]{\textcolor{defblue}{\emph{#1}}}

\definecolor{burgundy}{rgb}{144,0,32}

\newcommand{\torsten}[1]{\todo[inline,color=green!40]{\textbf{Tt:} #1}}


\hypersetup{
  pdftitle={Hamilton paths and cycles in flip graphs of (almost-)perfect matchings},
  pdfauthor={Sofia Brenner, Justin Dallant, Linda Kleist, Robert Lauff, Torsten M\"utze}
}

\title{Hamilton paths and cycles in flip graphs of (almost-)perfect matchings}

\author{Sofia Brenner}
\address[Sofia Brenner]{Fakult\"at für Mathematik und Informatik, Universit\"at Leipzig, Germany}
\email{sofia.brenner@uni-leipzig.de}

\author{Justin Dallant}
\address[Justin Dallant]{Fakult\"at Informatik, TU Dresden, Germany}
\email{justin.dallant@tu-dresden.de}

\author{Linda Kleist}
\address[Linda Kleist]{Fachbereich Informatik, Universit\"at Hamburg, Germany}
\email{linda.kleist@uni-hamburg.de}

\author{Robert Lauff}
\address[Robert Lauff]{Institut f\"ur Mathematik, TU Berlin, Germany}
\email{lauff@math.tu-berlin.de}

\author{\\Torsten M\"utze}
\address[Torsten M\"utze]{Institut f\"ur Mathematik, Universit\"at Kassel, Germany}
\email{tmuetze@mathematik.uni-kassel.de}

\author{Torben Sch\"urenberg}
\address[Torben Sch\"urenberg]{Zentrum f\"ur Industriemathematik, Universit\"at Bremen, Germany}
\email{torsch@uni-bremen.de}

\thanks{This project was supported by German Science Foundation grant~522790373.}

\begin{document}

\begin{abstract}
We consider the set of matchings of a graph and a local change operation, called a flip, between them.
In the combinatorial setting, the base graphs are either complete graphs or complete bipartite graphs, and in the geometric setting, the graphs are embedded on point sets in the plane, with the requirement that edges must be drawn as straight lines and must not cross.
For base graphs with an even number of vertices, we consider perfect matchings, i.e., all vertices are matched, and for base graphs with an odd number of vertices, we consider almost-perfect matchings, i.e., all but one vertex of the graph are matched.
A 2-flip between two perfect matchings exchanges two edges, and a 1-flip between two almost-perfect matchings exchanges one edge.
The corresponding flip graph has the set of perfect or almost-perfect matchings as vertices, with pairs of them connected by an edge if and only if they differ in a 2-flip or 1-flip, respectively.
In this work, we provide a comprehensive picture of Hamiltonicity properties of these flip graphs, i.e., under which conditions the flip graphs admit spanning paths and cycles.

We prove that the flip graphs in the combinatorial setting are Hamilton-connected, i.e., they admit a Hamilton path between any two vertices, or, if the flip graphs are bipartite, we prove that they are Hamilton-laceable, i.e., they admit a Hamilton path between any two vertices from different partition classes.
For almost-perfect matchings in the complete graph under 1-flips this answers a problem raised by Aichholzer, Dorfer, Rieck and Verciani.

In the geometric setting, while the flip graphs are connected, we prove that any path in them misses exponentially many vertices, in particular, they have no Hamilton paths or cycles.
For points in convex position and almost-perfect matchings under 1-flips, we complement this by constructing a cycle in the flip graph that visits almost all vertices.

We also introduce a new directed variant of the combinatorial setting, where each matching edge is directed.
This gives rise to different types of 2-flips and 1-flips, and leads to a more fine-grained picture of properties of the associated flip graphs, such as number of components, bipartiteness, diameter, Hamilton-connectedness/-laceability.
\end{abstract}

\keywords{Matching, flip graph, Hamilton cycle, complete graph, non-crossing, perfect, almost-perfect}

\maketitle
\newpage
\section{Introduction}

Matchings in graphs are fundamental combinatorial objects, and a large body of classical results and algorithms in graph theory and optimization is devoted to them.
Think for example of Hall's marriage theorem, Tutte's 1-factor theorem, Petersen's theorem, the Hungarian algorithm, Edmonds' blossom algorithm, the Gale-Shapley algorithm, and the variety of min-max theorems that link matchings to other graph objects.

The starting point of our paper are \defi{perfect matchings} in the complete graph~$K_n$ with an even number of vertices~$n=2m$, i.e., this is an $m$-subset of edges of~$K_n$, no two of which share an end vertex.
We equip the set of these matchings with a minimum change operation, referred to as a \defi{2-flip}, which consists of replacing two edges $\{i,j\},\{k,\ell\}$ by $\{i,k\},\{j,\ell\}$ (or $\{i,\ell\},\{j,k\}$); see the left hand side of Figure~\ref{fig:flip}.
In other words, two perfect matchings of~$K_n$ differ in a 2-flip, if their symmetric difference is a 4-cycle.
In this way, we obtain a \defi{flip graph}, which has as vertices all perfect matchings, and an edge between any two matchings that differ in a 2-flip; see Figure~\ref{fig:settings}~(a).
Recall that in the \defi{perfect matching polytope}~\cite{MR371732}, two matchings are adjacent if their symmetric difference is an even cycle of arbitrary length, so our 2-flips are more restrictive and form a subset of the edges of the polytope.

\begin{figure}[h!]
\includegraphics{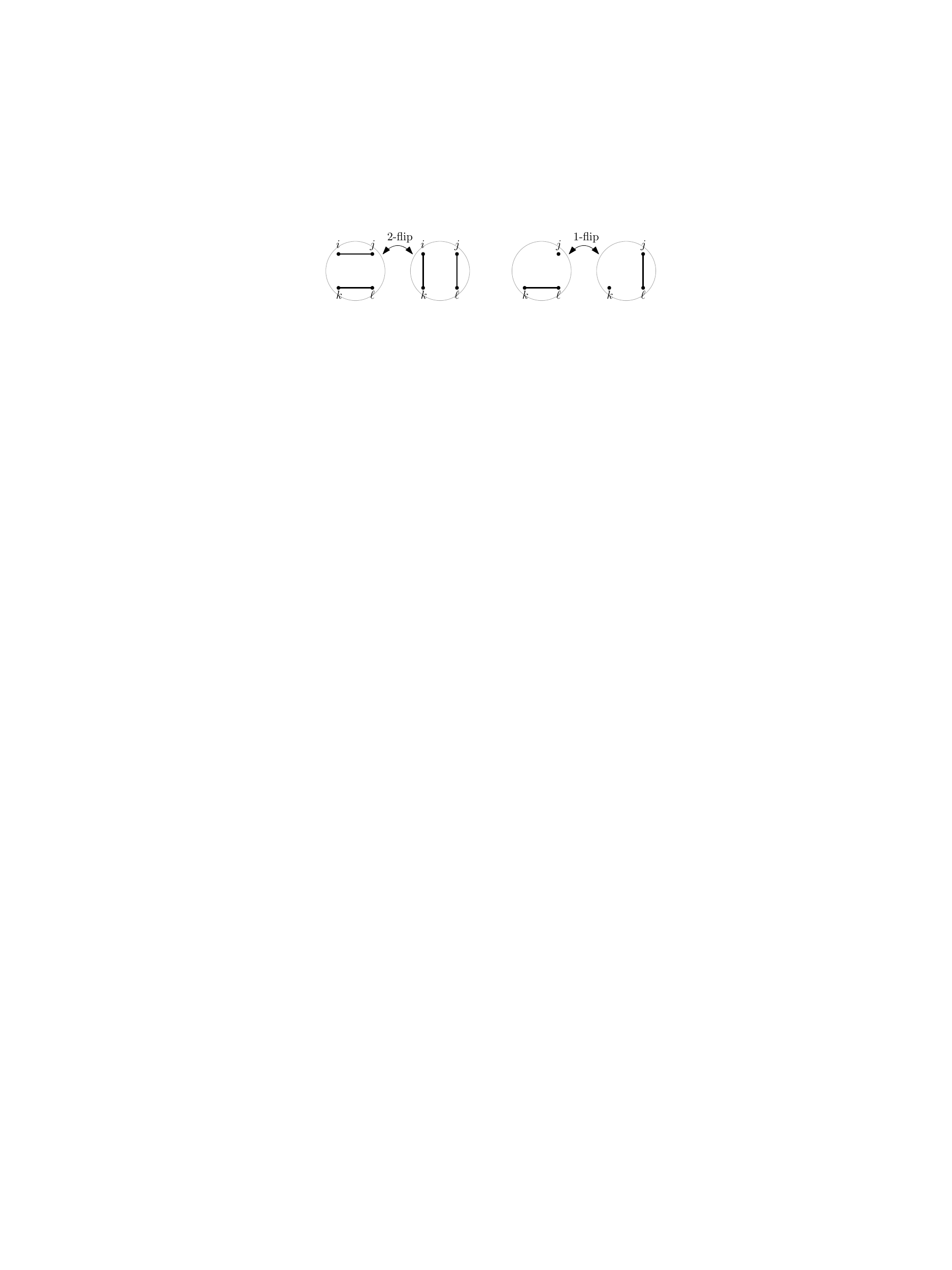}
\caption{Illustration of flip operations on perfect matchings (left) of~$K_{2m}$ and almost-perfect matchings (right) of~$K_{2m-1}$. Edges of the matchings that are not modified are not shown.}
\label{fig:flip}
\end{figure}

\begin{figure}[t!]
\includegraphics[scale=0.6]{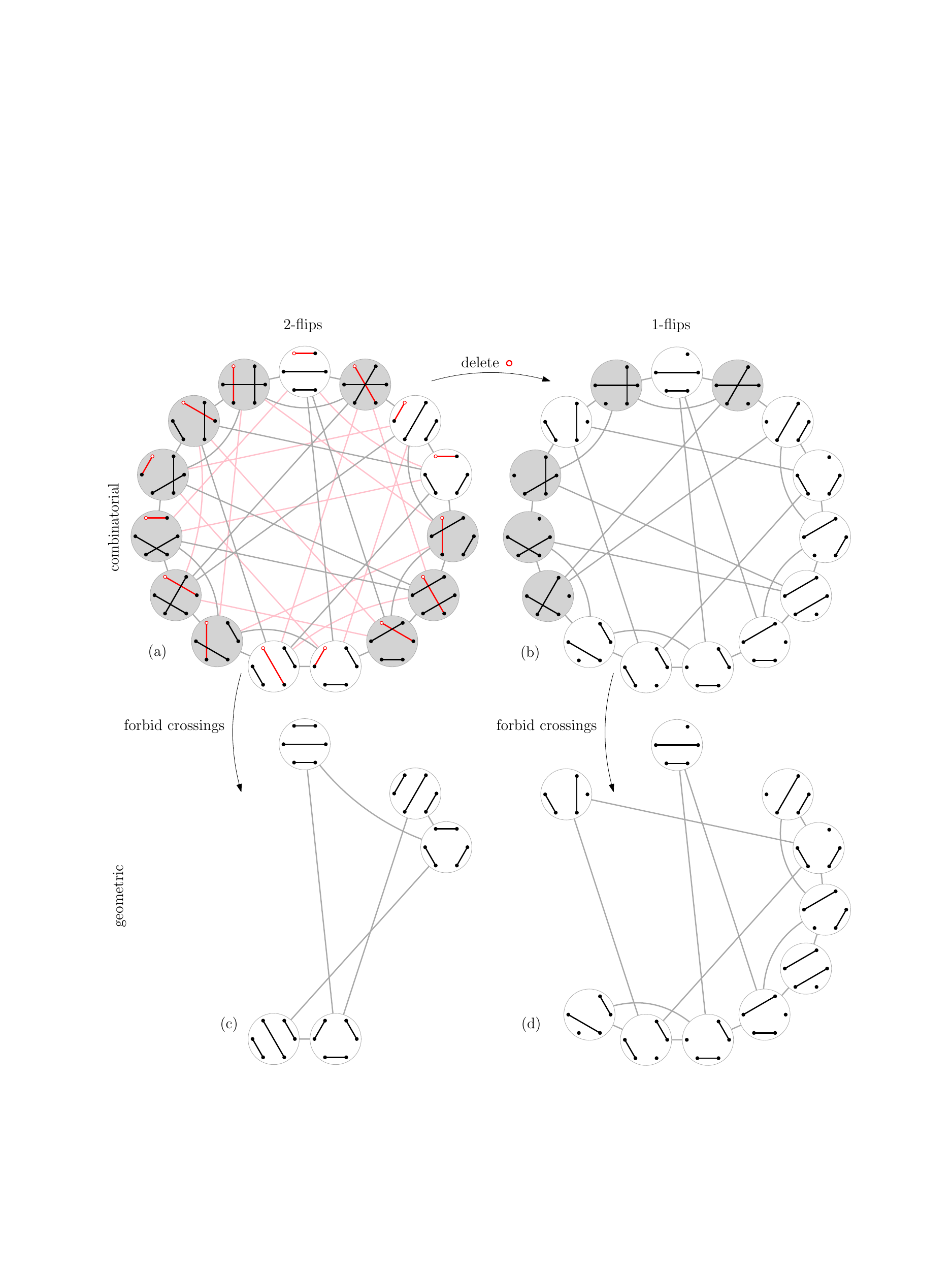}
\caption{Flip graphs on perfect and almost-perfect matchings in the different settings.
The marked vertex and its incident edge in~(a), and the corresponding flips, are deleted when going to~(b).
The shaded matchings in (a) and~(b) have crossings and are deleted when going down to the geometric setting.
The graph in~(b) is shown again in Figure~\ref{fig:K5-1flips},
the graph in~(d) is redrawn more nicely in Figure~\ref{fig:G5geom}.
}
\label{fig:settings}
\end{figure}

Our flip graph has been studied before by Diaconis and Holmes~\cite{MR1665632} in the context of phylogenetic trees, and the authors describe the construction of a \defi{Hamilton path}, i.e., a sequence of all perfect matchings, each appearing exactly once, such that any two consecutive matchings differ in a 2-flip.
Diaconis and Holmes~\cite{MR1887626} later proved mixing time results for a random walk in the flip graph.
Fabila-Monroy, Flores-Pe\~{n}aloza, Huemer, Hurtado, Urrutia and Wood~\cite{MR2535071} studied the chromatic number of the flip graph, i.e., the minimum number of colors needed so that any two neighboring vertices receive different colors.
They established the upper bound~$4m-4$, and conjectured that the true chromatic number is~$m+1$ for $m\geq 2$, which was confirmed by computer for $m=2,3,4$.
In this result, $m$ denotes the number of edges of the matching, and $n$ the number of vertices.
Interestingly, they were not able to prove a non-trivial lower bound.
Cioab\u{a}, Royle and Tan~\cite{MR4333854} later improved the upper bound for the chromatic number and investigated spectral properties of the flip graph.

We refer to the setting discussed so far with an abstract base graph (such as $K_n$) as the \defi{combinatorial} setting.
It is very natural to consider the same kind of objects and flips between them in a \defi{geometric} setting, where edges must be drawn as straight lines and must not cross.
In the simplest case, the $n=2m$ points are arranged in convex position.
Note that non-crossing perfect matchings on points in convex position are one of the classical Catalan families, counted by the Catalan numbers~$C_m=\frac{1}{m+1}\binom{2m}{m}$.
We thus obtain another flip graph, whose vertices are non-crossing matchings on the $n$ points, and two of them are connected by an edge if they differ in a 2-flip; see Figure~\ref{fig:settings}~(c).
Clearly, this flip graph is an induced subgraph of the flip graph of the combinatorial setting discussed before.
Hernando, Hurtado and Noy~\cite{MR1939072} proved that the geometric flip graph is bipartite, and that it has diameter~$m-1$ and connectivity~$m-1$.
Furthermore, they established the existence of a Hamilton cycle for even $m\geq 4$, and proved that no Hamilton path exists for odd $m\geq 5$, due to the hugely imbalanced bipartition in this case.
In fact, using a straightforward Catalan bijection from non-crossing perfect matchings to Dyck words, one can apply a result of Ruskey and Proskurowski~\cite{MR1041167} to directly obtain the existence of a Hamilton path in the flip graph, with the extra property that every 2-flip either adds or removes an edge between two consecutive points on the convex hull.
We mention this here, because in the following we will consider another type of flip that can also be understood as a restricted 2-flip.

Specifically, motivated by token-sliding games in grid graphs studied in~\cite{MR4189477,MR5070842}, Aichholzer, Br\"{o}tzner, Perz and Schnider~\cite{MR4879965} recently considered point sets with an odd number $n=2m+1$ of points, and \defi{almost-perfect matchings}, i.e., matchings with $m$ edges that cover all but one point (which they coined \defi{odd matchings}); see Figure~\ref{fig:settings}~(d).
A natural flip operation in this context, which we refer to as a \defi{1-flip}, is the following; see the right hand side of Figure~\ref{fig:flip}: if the point~$j$ is currently unmatched and $\{k,\ell\}$ is an edge in the matching, then we may replace the edge~$\{k,\ell\}$ by $\{j,\ell\}$ (or $\{j,k\}$).
In other words, two almost-perfect matchings differ in a 1-flip, if their symmetric difference is a 2-path.
Note that a 1-flip is uniquely specified by the vertex that becomes unmatched by the flip operation.
The authors of~\cite{MR4879965} proved that the corresponding flip graph is connected for point sets in general position, establishing an upper bound of~$\cO(n^2)$ on the diameter.
For convex position, they proved that the diameter is~$\Theta(n)$.
The setting of odd matchings was further studied in~\cite{MR4993079,MR5044462}; see Section~\ref{sec:related} below for more details.

It is very natural to consider almost-perfect matchings and 1-flips between them also in the combinatorial setting of the complete graph with an odd number of vertices; see Figure~\ref{fig:settings}~(b).
In fact, there is a straightforward bijection between perfect matchings of~$K_{2m}$ and almost-perfect matchings of~$K_{2m-1}$, which simply consists of removing one fixed vertex and the matching edge incident to it.
Under this bijection, the following correspondence between 2-flips in~$K_{2m}$ and 1-flips in~$K_{2m-1}$ becomes apparent:
1-flips in $K_{2m-1}$ correspond exactly to 2-flips in $K_{2m}$ that involve the removed vertex (specifically, the alternating 4-cycle contains the removed vertex).
Consequently, the flip graph of almost-perfect matchings of~$K_{2m-1}$ is a spanning subgraph of the flip graph of perfect matchings of~$K_{2m}$, where the edges in the subgraph correspond to restricted 2-flips that involve the removed vertex.
Furthermore, the geometric flip graph considered in~\cite{MR4879965} (Figure~\ref{fig:settings}~(d)) is an induced subgraph of the combinatorial flip graph of almost-perfect matchings of~$K_{2m-1}$ under 1-flips (Figure~\ref{fig:settings}~(c)).

The starting point of this work is the following question raised by Aichholzer, Dorfer, Rieck and Verciani~\cite{booklet-problem} during the workshop `Combinatorics, Algorithms and Geometry' in Kassel in March, 2026:
\begin{question}
\label{quest:workshop}
Does the flip graph of almost-perfect matchings in complete graphs under 1-flips admit a Hamilton cycle?
\end{question}

\subsection{Our results}

In this paper, we tackle Question~\ref{quest:workshop} and closely related ones using a very strong notion of Hamiltonicity.
This has two advantages: It gives stronger results, and at the same time simpler proofs, because of increased flexibility when applying induction. 
Specifically, we say that a graph is \defi{Hamilton-connected} if it admits a Hamilton path between any two distinct vertices.
A bipartite graph is \defi{Hamilton-laceable} if it admits a Hamilton path between any two vertices from different partition classes.
In particular, this requires that the partition classes have the same size, and is best possible in this case.
Clearly, being Hamilton-connected is stronger than having a Hamilton cycle, which is stronger than having a Hamilton path.

We establish Hamilton-connectedness and Hamilton-laceability for a variety of flip graphs of (almost-)perfect matchings under 2-flips and 1-flips in the combinatorial and geometric setting.
In particular, we positively answer Question~\ref{quest:workshop} of Aichholzer, Dorfer, Rieck and Verciani.
Table~\ref{tab:results} summarizes our results, and provides pointers to the theorems with more details stated later.
The top part of the table is devoted to the combinatorial setting, and the bottom part to the geometric setting.
In the rest of this section, we discuss the most important results from the table in more detail.

\torsten{Is the flip graph of \cite{MR1939072} Hamilton-laceable? For the base case $m=4$ the computer says yes.}

\definecolor{myred}{rgb}{1,0.753,0.796}
\definecolor{mygreen}{rgb}{0.565,0.933,0.565}

\begin{table}[t!]
\renewcommand{\arraystretch}{1.1}
\caption{Overview of results in this paper.
In the cases where the result was established in earlier work or directly follows from it, the corresponding citation is stated in the last column.
Existence results are highlighted in green, and non-existence results are highlighted in red.
}
\label{tab:results}
\makebox[0cm]{ 
\begin{tabular}{@{}lllll@{}}
\toprule 
Base graph & Matchings & Flip & Result & Reference \\
\midrule
\multicolumn{5}{@{}l}{\bfseries\itshape Combinatorial setting} \\
\midrule
$K_n$ ($n=2m$)   & perfect & 2-flip & \cellcolor{mygreen}Ham.-connected ($m\geq 1$) & Th.~\ref{thm:CE2-hconn} \\
$K_n$ ($n=2m+1$) & almost-perfect & 1-flip & \cellcolor{mygreen}Ham.-connected ($m\geq 1$) & Th.~\ref{thm:CO1-hconn} \\
$K_n$ ($n=2m+d$) & $m$ edges & 1-flip & \cellcolor{mygreen}Ham.-connected ($m,d\geq 1$) & Th.~\ref{thm:COd-hconn} \\
$K_{n,n}$   & perfect & 2-flip & \cellcolor{mygreen}Ham.-laceable ($n\geq 2$)  & Th.~\ref{thm:CEB-lace} \cite{MR683982} \\
$K_{n,n+1}$ & almost-perfect & 1-flip & \cellcolor{mygreen}Ham.-laceable ($n\geq 3$)  & Th.~\ref{thm:COB-lace} \cite{MR683982} \\
$K_{n,n+d}$ & $n$ edges & 1-flip & \cellcolor{mygreen}Ham.-connected ($n,d\geq 2$) & Th.~\ref{thm:COB-conn} \\
\midrule
$K_n$ ($n=2m$) & directed perfect & 2-flips: & & \\
               &                      & type~i & \cellcolor{mygreen}Ham.-laceable ($m\geq 4$) & Th.~\ref{thm:CED-typei-lace} \\
               &                      & type~ii & \cellcolor{mygreen}Ham.-laceable ($m\geq 3$) & Th.~\ref{thm:CED-typeii-lace} \\
               &                      & type~iii,iv,v & \cellcolor{myred}disconnected ($m\geq 2$) & Th.~\ref{thm:CED-basic} \\
               &                      & mixed & \cellcolor{mygreen}Ham.-connected ($m\geq 2$) & Th.~\ref{thm:CED-hconn} \\
$K_n$ ($n=2m+1$) & dir.\ almost-perf.   & 1-flips: & & \\
               &                      & type~i,ii,iii,iv: & \cellcolor{myred}disconnected ($m\geq 1$) & Th.~\ref{thm:COD-basic} \\ 
               &                      & mixed & \cellcolor{mygreen}Ham.-connected ($m\geq 1$) & Th.~\ref{thm:COD-hconn} \\
\midrule
\multicolumn{5}{@{}l}{\bfseries\itshape Geometric setting (non-crossing)} \\       
\midrule
$n=2m$ points & perfect    & 2-flip & \cellcolor{mygreen}Ham.\ cycle ($m\geq 4$ even) & Th.~\ref{thm:GE-even} \cite{MR1939072} \\
convex position               &  &     & \cellcolor{myred}miss $\Theta(2^m/m^{3/2})$ ($m\geq 5$ odd) & Th.~\ref{thm:GE-odd} \cite{MR1939072} \\
$n=2m+1$ pts. & almost-perfect & 1-flip & \cellcolor{myred}miss $\Theta(2^{m/5})$ ($m\geq 34$) & Th.~\ref{thm:GO-miss} \\
convex position               &  &     & \cellcolor{mygreen}$(1-o(1))$-fraction cycle ($m\geq 1$) & Th.~\ref{thm:GO-long} \\
$n=2m+1$ pts. & almost-perfect & 1-flip & \cellcolor{myred}miss $\Theta(4^m/m^{3/2})$ ($m\geq 5$) & Th.~\ref{thm:GOP} \\
general position & & & \cellcolor{myred} & \\
\bottomrule
\end{tabular}
}
\end{table}

As a warm-up, we consider the flip graph of perfect matchings in the complete graph under 2-flips, and strengthen the result of Diaconis and Holmes~\cite{MR1665632} to Hamilton-connectedness (Theorem~\ref{thm:CE2-hconn}).
Next, we resolve Question~\ref{quest:workshop} affirmatively by proving that the flip graph of almost-perfect matchings in the complete graph under 1-flips is Hamilton-connected (Theorem~\ref{thm:CO1-hconn}).
This construction is technically much more demanding, as 1-flips are considerably more restricted than 2-flips (recall Figure~\ref{fig:settings}~(a)+(b)).
Nonetheless, the construction for Theorem~\ref{thm:CO1-hconn} can be turned into an efficient algorithm for computing a Hamilton path in the flip graph, generating each new matching in constant time on average.
An implementation of this algorithm in C++ is available for download and experimentation on the Combinatorial Object Server~\cite{cos_match}.

In addition to the complete graph, we also consider the complete bipartite graph as a base graph, and establish Hamilton-laceability of the corresponding flip graphs of perfect matchings and almost-perfect matchings under 2-flips or 1-flips, respectively (Theorems~\ref{thm:CEB-lace} and~\ref{thm:COB-lace}).
These results are based on a simple observation that connects 1-flips to so-called star transpositions in permutations, and is obtained by applying a well-known result of Tchuente~\cite{MR683982}.

Still in the combinatorial setting, we also introduce a directed variant of the problem, where each edge of the matching has one of two possible orientations.
The directions of the edges allow distinguishing five different types of 2-flips and four different types of 1-flips; see Figures~\ref{fig:dir-2flips} and~\ref{fig:dir-1flips}, respectively.
In some cases the flip graphs are not connected (Theorems~\ref{thm:CED-basic} and~\ref{thm:COD-basic}), but when they are we prove Hamiltonicity (Theorems~\ref{thm:CED-typei-lace}, \ref{thm:CED-typeii-lace}, \ref{thm:CED-hconn} and~\ref{thm:COD-hconn}).

We note that all the flip graphs in the combinatorial settings (directed and undirected) are vertex-transitive, and so our Hamiltonicity results resolve special cases of Lov{\'a}sz'~\cite{MR0263646} well-known conjecture on Hamilton paths and cycles in vertex-transitive graphs.

We now move to the geometric setting, in which finding Hamilton cycles is considerably harder, and often impossible.
We already mentioned before the result of Hernando, Hurtado and Noy~\cite{MR1939072} that the flip graph of non-crossing perfect matchings of a set of $n=2m$ points in convex position admits a Hamilton cycle if $m\geq 4$ is even, and no Hamilton path if $m\geq 5$ is odd.
In fact, their non-existence argument uses the observation that the flip graph is bipartite (for all $m$), with hugely imbalanced partition classes for odd~$m$.
It follows that for odd~$m$ any path in the flip graph misses at least as many vertices as the size difference in the partition classes (minus~1), in this case $\Theta(2^m/m^{3/2})$, i.e., exponentially many.
We establish a similar result for non-crossing almost-perfect matchings on~$n=2m+1$ points in convex position, namely that any path in the flip graph misses~$\Theta(2^{m/5})$ many vertices (Theorem~\ref{thm:GO-miss}).
Interestingly, the reason for non-Hamiltonicity is different than in the results from~\cite{MR1939072} mentioned before, in particular, the flip graph in Theorem~\ref{thm:GO-miss} is not bipartite.
Instead, in our case it is the large number of degree-2 vertices that forces excluding many vertices.
Complementing this result, we construct a cycle that visits almost all vertices of the flip graph, namely a $(1-o(1))$-fraction (Theorem~\ref{thm:GO-long}).
This is not a contradiction, because even though the number of vertices missed in Theorem~\ref{thm:GO-miss} is exponential, their fraction is only~$o(1)$, and the two $o(1)$ terms decay at a different rate.

Hamilton paths and cycles in flip graphs are often referred to as \defi{combinatorial Gray codes}~\cite{MR1491049,MR4649606} in the literature.
As exemplified before, all of the constructions presented in this paper translate straightforwardly into efficient algorithms for computing the corresponding Gray code listings of matchings.

\subsection{Related work}
\label{sec:related}

There has been a substantial amount of previous work on flips in matchings, both in the combinatorial and geometric setting, sometimes with a more general notion of flips than we consider here, as explained below.
In particular, very recently there has been a flurry of exciting new developments regarding hardness results for the algorithmic problem of computing shortest paths in flip graphs on matchings.
Given two matchings of some base graph, the goal is to compute a shortest sequence of flips to transform one matching into the other.

Bonamy, Heinrich, Kobayashi, M\"{u}hlenthaler, Bousquet, Ito, Mary and Wasa~\cite{MR4008469} considered the shortest path problem for 2-flips between two given perfect matchings from a particular class of graphs.
They proved that the problem is PSPACE-complete even for split graphs and bipartite graphs of bounded bandwidth.
On the positive side, they showed that the problem is solvable in polynomial time for interval graphs, strongly chordal graphs, outerplanar graphs and cographs.
Cardinal and Steiner~\cite{MR4872027} proved that in the perfect matching polytope of a bipartite graph, unless P=NP, there is no efficient algorithm for computing a path of constant length between two matchings at distance~2 (!).

Aichholzer, Brenner, Dorfer, Hoang, Perz, Rieck and Verciani~\cite{MR4993079} considered odd matchings, i.e., almost-perfect matchings, in base graphs with an odd number of vertices.
They give a characterization of graphs in which any two odd matchings can be transformed into each other via 1-flips, i.e., for which the corresponding flip graph is connected.
They also showed that finding a shortest sequence of 1-flips between two odd matchings is NP-hard.
Subsequently, Dorfer~\cite{MR5044462} strengthened these hardness results to inapproximability, and also proved hardness of computing the diameter of the flip graph.

Houle, Hurtado, Noy and Rivera-Campo~\cite{MR2190792} considered plane perfect matchings on general point sets, but allowed a more powerful flip operation, namely symmetric difference with an alternating crossing-free cycle of arbitrary (even) length.
They proved that the corresponding flip graph is connected, and that its diameter is at most~$n-2$, for any set of $n=2m$ points.
It is unknown whether the flip graph remains connected if we impose any bound on the length of the alternating flipping cycle, in particular for 2-flips; see~\cite{MR2455502}. 
Aichholzer et al.~\cite{MR2519379} considered an even more powerful flip operation, namely symmetric difference with any number of alternating crossing-free cycles of arbitrary (even) lengths.
They established an upper bound of $\cO(\log n)$ for the diameter of the flip graph, which almost matches an $\Omega(\log n/\log \log n)$ lower bound of Razen~\cite{razen:2008}.

Binucci, Montecchiani, Perz and Tappini~\cite{binucci-montecchiani-perz-tappini:2025} proved that for point sets in general position, finding a minimum length sequence of 2-flips between two given plane perfect matchings is NP-hard.

Apart from matchings, many other combinatorial objects and their corresponding flip graphs have been studied intensively, and the flip graphs often arise as the skeleta of high-dimensional polytopes.
A classical example is the \defi{associahedron}, the flip graph of triangulations of a convex polygon, where two triangulations are connected if they differ in exchanging a single diagonal.
Lucas~\cite{MR920505} proved that this flip graph admits a Hamilton cycle.
Lucas, Roelants van Baronaigien and Ruskey~\cite{MR1239499} provided an efficient algorithm, and a simpler Hamiltonicity proof was published by Hurtado and Noy~\cite{MR1723053}.
Huemer, Hurtado, Noy and Oma\~{n}a-Pulido~\cite{MR2510231} constructed Gray codes for non-crossing set partitions and dissections of a convex polygon, both with fixed and variable number of blocks or diagonals, respectively.
Note that dissections of a convex polygon with the largest possible number of diagonals are triangulations, so this setting generalizes what we discussed before.
In a spectacular breakthrough, Dorfer~\cite{dorfer_preprint} recently proved that computing shortest paths in the associahedron is NP-hard.

Another class of polytopes that yields flip graphs on geometric objects are the (weak and strong) \defi{rectangulotopes} of Cardinal and Pilaud~\cite{MR4833064}, for which Merino and M\"utze~\cite{MR4598046} constructed Hamilton cycles.
Here the combinatorial objects are rectangulations, i.e., subdivisions of a rectangle into smaller ones, and the flip operations are local modifications of these subdivisions.

We close this section by mentioning Kotzig's infamous \defi{perfect 1-factorization conjecture}~\cite{MR173249}, which asserts that the edges of~$K_n$, $n=2m$, can be decomposed into $n-1$ perfect matchings, such that the union of any two of them forms a Hamilton cycle.

\subsection{Outline of this paper}

In Section~\ref{sec:prelim}, we collect definitions that are used throughout this paper.
In Section~\ref{sec:comb} we prove the results in the combinatorial setting listed in the top part of Table~\ref{tab:results}.
The results for the newly introduced (combinatorial) setting of directed matchings are treated separately in Section~\ref{sec:directed}.
In Section~\ref{sec:geom}, we turn to the geometric setting, proving the results listed in the bottom part of Table~\ref{tab:results}.

\section{Preliminaries}
\label{sec:prelim}

We define $[n]\coloneq \{1,2,\ldots,n\}$ and $[i,j]\coloneq\{i,i+1,\ldots,j\}$.
For any sequence $x=(x_1,\ldots,x_n)$, we write $\rev(x)\coloneq (x_n,x_{n-1},\ldots,x_1)$ for the reverse sequence.
We refer to a cycle of length~$\ell$ or a path of length~$\ell$ as an \defi{$\ell$-cycle} or \defi{$\ell$-path}, respectively.
Recall that a 1-flip in an almost-perfect matching can be specified uniquely by describing the vertex to become unmatched through the flip.
Specifically, given an almost-perfect matching~$M$ with unmatched vertex~$j$ and some matched vertex~$k$, we write $u(M,k)$ for the matching obtained from~$M$ by replacing the edge~$\{k,\ell\}$ by the edge~$\{j,\ell\}$, where $\ell$ is the neighbor of~$k$ in~$M$.
Recall that the \defi{Catalan numbers} are defined as $C_n\coloneq \frac{1}{n+1}\binom{2n}{n}$, and they have exponential asymptotic growth $C_n=(1+o(1)) 4^n/(\sqrt{\pi}n^{3/2})$.

\section{Combinatorial setting}
\label{sec:comb}

\subsection{Perfect matchings in complete graphs}

As mentioned in the introduction, the set of perfect matchings of $K_n$, $n=2m$, is in one-to-one correspondence with the set of almost-perfect matchings of $K_{n-1}$, and the bijection is given by removing one fixed vertex.
Consequently, both families of objects have the same counting sequence, namely the double factorials, $(n-1)(n-3)\cdots 3\cdot 1=(n-1)!!$ (OEIS A001147 \cite{oeis}).
In particular, the number of matchings is always odd.
Under this bijection, 2-flips in~$K_n$ are in correspondence either with 2-flips in $K_{n-1}$ (if the removed vertex is not one of the four involved in the 2-flip) or with 1-flips in $K_{n-1}$ (if the removed vertex is one of the four involved in the 2-flip).

\begin{theorem}
\label{thm:CE2-hconn}
Let $m\geq 1$ and $n\coloneq 2m$.
The flip graph of perfect matchings of~$K_n$ under 2-flips is Hamilton-connected.
\end{theorem}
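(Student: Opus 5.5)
We argue by induction on~$m$, partitioning the flip graph~$G_m$ of perfect matchings of~$K_{2m}$ according to the partner of the vertex~$n=2m$.
For $m=1$ the graph is a single vertex, and the statement is vacuous.
For $m=2$ there are three matchings, and any two of them differ in a 2-flip, so $G_2\cong K_3$, which is Hamilton-connected.
For $m\geq 3$ and each $i\in[n-1]$, let $V_i$ be the set of perfect matchings containing the edge~$\{i,n\}$.
Deleting this edge identifies $V_i$ with the perfect matchings of $K_{n-2}$ on the vertex set $[n-1]\setminus\{i\}$.
Two matchings in~$V_i$ differ in a 2-flip if and only if their reductions do, so the subgraph induced by~$V_i$ is isomorphic to~$G_{m-1}$.
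By induction it is therefore Hamilton-connected, and it has $(n-3)!!\geq 3$ vertices.

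Next we describe the edges between the blocks. For $i\neq j$, a matching $M\in V_i$ in which $j$ is matched to some~$k$ has a 2-flip neighbour in~$V_j$, namely $M-\{i,n\}-\{j,k\}+\{j,n\}+\{i,k\}$.
Hence every vertex of~$V_i$ has neighbours in every other block~$V_j$, and the edges between $V_i$ and $V_j$ form a perfect matching between them.
The block structure is thus a complete graph $K_{n-1}$ on the blocks, with many connecting edges between each pair.

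Given distinct start and end matchings $s,t$, we consider two cases.

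\emph{Case 1: $s\in V_a$ and $t\in V_b$ with $a\neq b$.}
We order the blocks as $a=i_1,i_2,\ldots,i_{n-1}=b$.
We then traverse each block $V_{i_r}$ by a Hamilton path from an entry vertex $x_r$ to an exit vertex $y_r$, where $y_r$ is adjacent to $x_{r+1}\in V_{i_{r+1}}$.
We set $x_1=s$ and $y_{n-1}=t$, and we must choose the remaining entries and exits so that $x_r\neq y_r$ in every block.
This is possible because each block has at least $3$ vertices, and the cross edges between consecutive blocks form a perfect matching.
Concretely, we choose $y_r$ with $y_r\neq x_r$ and such that its partner $x_{r+1}$ in $V_{i_{r+1}}$ avoids the one forbidden vertex (namely $t$ if $r+1=n-1$).
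There are at least $3-1-1\geq 1$ choices for $y_r$.
Hamilton-connectedness of each block then finishes this case.

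\emph{Case 2: $s,t\in V_a$ for the same block.}
This is the main obstacle.
We take a Hamilton path $P$ of~$V_a$ from $s$ to~$t$, given by induction.
We pick an edge $uv$ on~$P$ such that $u$ and $v$ have their cross-neighbours $u'$ and $v'$ in different blocks $V_c$ and $V_d$.
Such an edge exists: if all vertices of~$P$ sent their $c$-neighbour into a single block, this would contradict the fact that consecutive matchings on~$P$ differ by only a 2-flip.
More simply, every vertex has a neighbour in every block, so we may take $u'\in V_c$ and $v'\in V_d$ for any chosen $c\neq d$.
We then replace the edge $uv$ by a detour $u\to u'$, followed by all the other blocks, and returning through $v'\to v$.
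This detour is a Hamilton path of the union of the blocks other than~$V_a$, from $u'$ to~$v'$.
It can be built exactly as in Case~1, with the blocks ordered $c=i_1,\ldots,i_{n-2}=d$ and with $n-2\geq 2$ blocks.
Here we need $u'\neq v'$, which holds automatically since they lie in different blocks.
The one point to check is that $P$ has at least one edge, which is true because $s\neq t$.
Splicing the detour into~$P$ yields the desired Hamilton path from~$s$ to~$t$.
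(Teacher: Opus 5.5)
Your proof is correct and is essentially the paper's argument: you partition by the partner of a fixed vertex, use induction inside each block (isomorphic to the flip graph for $m-1$), and chain all blocks via the cross edges, choosing each exit so that the next entry avoids the prescribed endpoint. The one difference is that the paper picks the pivot vertex to be one whose partners in $s$ and $t$ differ (possible since $s\neq t$), so $s$ and $t$ always lie in different blocks and your Case~2 never arises; your detour splicing for that case is nonetheless valid, and you can delete the first, garbled justification for choosing $uv$, since the second one suffices.
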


\begin{figure}[b]
\includegraphics{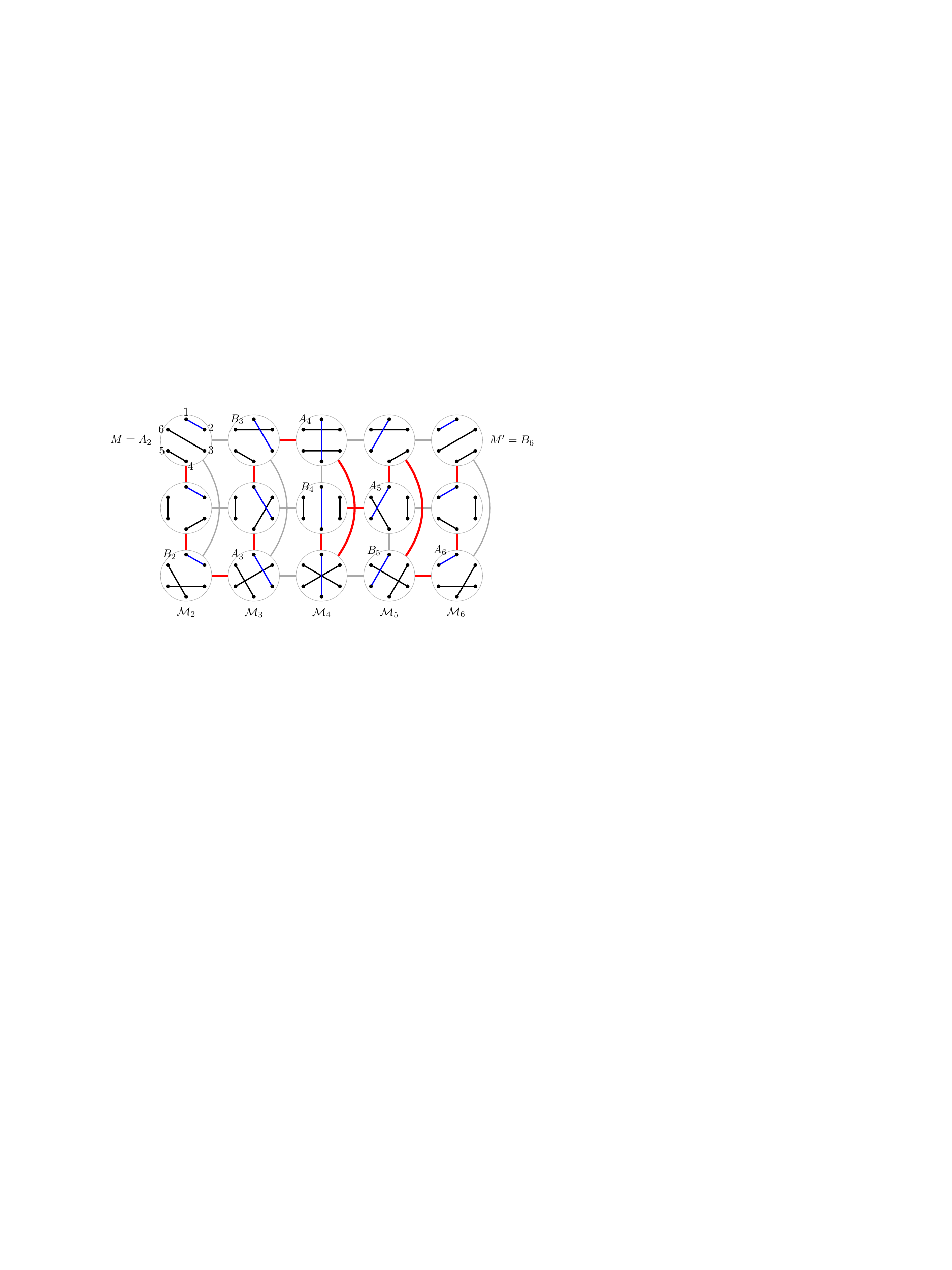}
\caption{Illustration of the proof of Theorem~\ref{thm:CE2-hconn} for $m=3$.
Edges between sets~$\cM_i$ and~$\cM_j$ for $j>i+1$ are not shown.}
\label{fig:hconn}
\end{figure}

The idea for this proof is very natural, and was already used in similar form by Diaconis and Holmes~\cite{MR1665632} to establish the existence of a Hamilton path in the flip graph.

\begin{proof}
We use induction on $m$.
For the induction base $m=2$, the flip graph is a triangle, which is Hamilton-connected.
For the induction step, let $m\geq 3$, and let $M$ and~$M'$ be two distinct perfect matchings of~$K_n$.
We label the vertices $1,\ldots,n$ as follows:
Consider two consecutive edges in the symmetric difference~$M\triangle M'$ and label their vertices $2,1,n$ such that $\{1,2\}\in M$ and $\{1,n\}\in M'$.
The remaining vertex labels~$3,\ldots,n-1$ are assigned arbitrarily.

We partition the set of all perfect matchings according to the edge incident with vertex~1.
Specifically, for $i=2,\ldots,n$, we let $\cM_i$ be the set of matchings that contain the fixed edge~$\{1,i\}$; see Figure~\ref{fig:hconn}.
Note that $M\in\cM_2$ and $M'\in\cM_n$.
We choose pairs of matchings $A_i,B_i\in\cM_i$ for $i=2,\ldots,n$ satisfying the following conditions:
\begin{itemize}[topsep=1mm,leftmargin=4mm]
\item $A_2=M$ and $B_n=M'$;
\item $A_i\neq B_i$ for $i=2,\ldots,n$;
\item $A_{i+1}$ differs from $B_i$ by a 2-flip for all $i=2,\ldots,n-1$.
\end{itemize}
For any matching $B_i\in\cM_i$, there is a suitable 2-flip to reach some matching in $\cM_{i+1}$.
Thus, when $A_{n-1}$ is chosen, there are at least two choices for~$B_{n-1}$, and not both of their neighbors in~$\cM_n$ can be equal to the prescribed last matching $B_n=M'$, so one of the neighbors can become~$A_n$.

By induction, for $i=2,\ldots,n$ there is a path~$P_i$ in the flip graph from $A_i$ to $B_i$ through all matchings in~$\cM_i$.
The concatenation $P_2,P_3,\ldots,P_n$ is the desired Hamilton path in the entire flip graph.
\end{proof}

\subsection{Almost-perfect matchings in complete graphs}
\label{sec:CO1-hconn}

An immediate consequence of Theorem~\ref{thm:CE2-hconn} and the aforementioned bijection between perfect matchings of~$K_n$ and almost-perfect matchings of~$K_{n-1}$ is that the flip graph of almost-perfect matchings under 2-flips and 1-flips is Hamilton-connected.

The following theorem strengthens this result and eliminates 2-flips, i.e., we obtain Hamilton-connectedness under 1-flips, which yields an affirmative answer to Question~\ref{quest:workshop} raised by Aichholzer, Dorfer, Rieck and Verciani~\cite{booklet-problem}.

\begin{theorem}
\label{thm:CO1-hconn}
Let $m\geq 1$ and $n\coloneq 2m+1$.
The flip graph of almost-perfect matchings of~$K_n$ under 1-flips is Hamilton-connected.
\end{theorem}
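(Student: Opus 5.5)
We argue by induction on $m$, proving a strengthened, more flexible statement so that the induction hypothesis has enough room to work. For small $m$ ($m=1,2$, possibly $m=3$) the flip graph is small enough to check directly: for $m=1$ it is a triangle, and for $m=2$ it has $15$ vertices. For the induction step, fix the vertex $n$ and partition the almost-perfect matchings of $K_n$ into classes $\cM_0$ and $\cM_i$ for $i\in[n-1]$. Here $\cM_i$ consists of the matchings containing the edge $\{n,i\}$, and $\cM_0$ consists of the matchings in which $n$ is unmatched.

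\emph{Structure of the classes.} Deleting the edge $\{n,i\}$ identifies $\cM_i$ with the almost-perfect matchings of a copy of $K_{n-2}$. A 1-flip inside this copy is a 1-flip in $K_n$ that stays within $\cM_i$. By induction, each $\cM_i$ with $i\neq 0$ therefore induces a Hamilton-connected subgraph. The class $\cM_0$ behaves differently. It is an independent set of size $(n-2)!!$, since no 1-flip keeps $n$ unmatched. We handle it by absorption, using the following local observation. Let $X\in\cM_0$ and let $\{k,\ell\}\in X$. Set $Y\coloneq X-\{k,\ell\}+\{n,\ell\}\in\cM_\ell$ and $Z\coloneq X-\{k,\ell\}+\{n,k\}\in\cM_k$. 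Then $Y$ and $Z$ are adjacent, and $X$ is adjacent to both. So whenever the edge $YZ$ lies on a path, we may replace it by $Y,X,Z$. Each such cross edge absorbs exactly one $X$, namely $Y$ with $n$ unmatched, so distinct matchings $X$ never compete for the same edge.

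\emph{Plan of the induction step.} It suffices to build a Hamilton path of $G-\cM_0$ between the prescribed endpoints that contains, for every $X\in\cM_0$, at least one of its $m$ absorbing cross edges. If an endpoint lies in $\cM_0$, we first step to a neighbour of it. Traversing each $\cM_i$ in a single piece gives only $2m-1$ crossings, far fewer than $|\cM_0|=(2m-1)!!$. Hence we must enter and leave each $\cM_i$ many times. For this we strengthen the induction hypothesis to a path-cover statement: for any $t$ (up to a suitable bound growing with the class size) prescribed pairwise disjoint pairs of vertices, obeying a mild separation condition, the vertex set can be covered by $t$ vertex-disjoint paths joining the prescribed pairs. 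The $t=1$ case is exactly Hamilton-connectedness.

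\emph{Realising the cover.} We organise the absorbing edges systematically. Group the matchings $X\in\cM_0$ by their edge at vertex $n-1$, say $\{n-1,j\}$, and absorb every such $X$ through the cross edge between $\cM_{n-1}$ and $\cM_j$. The $Y$'s then range over matchings of $\cM_{n-1}$ with $j$ unmatched, and the $Z$'s over matchings of $\cM_j$ with $n-1$ unmatched. We then route the global path back and forth between $\cM_{n-1}$ and the other classes. Each pass uses one absorbing edge, and the pieces inside each class are supplied by the strengthened path-cover hypothesis. Finally we verify that the strengthened statement itself propagates through the same decomposition.

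I expect the main obstacle to be this last bookkeeping: choosing the right strengthened hypothesis, with realistic bounds on the number of pairs and the constraints on their positions, so that it is weak enough to prove inductively but strong enough to absorb the exponentially large independent class $\cM_0$. I would first try to absorb only a sub-collection of $\cM_0$ in each pass, recursively one vertex label at a time, so that each class $\cM_i$ is asked to provide only $\cO(m)$ disjoint paths per level.
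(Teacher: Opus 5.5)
\textbf{There is a genuine gap.} Your local observations are right: the triangles $X,Y,Z$ exist, and $\cM_0$ is an independent set of size $(2m-1)!!$, which forces many crossings. But the proposal stops exactly where the proof has to begin. The strengthened path-cover hypothesis is never stated, and ``verify that the strengthened statement itself propagates'' is the entire difficulty.

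Your quantitative hope is also contradicted by a simple count. Every cross edge lies in a triangle with only one $X$. Apart from at most two endpoint $X$'s, each $X$ needs its own cross edge, so the Hamilton path of $G-\cM_0$ contains at least $(2m-1)!!-2$ cross edges. It is therefore cut into at least $(2m-1)!!-1$ pieces inside the $2m$ classes $\cM_1,\dots,\cM_{n-1}$. That is on average about $(2m-1)!!/(2m)$ pieces per class, a $1/(2m)$-fraction of the class size, not $\cO(m)$. This holds however you organise the recursion, because it is a count at the top level.

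A disjoint-path-cover statement for arbitrary prescribed pairs cannot hold at this scale. A vertex all of whose neighbours are prescribed endpoints, with no pair consisting of two of them, cannot be covered. So the endpoint structure would have to be rigid, not ``mildly separated''.

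Your own organisation shows how rigid it must be. The map $X\mapsto Y$ is a bijection $\cM_0\to\cM_{n-1}$, and each $Y\in\cM_{n-1}$ has exactly two neighbours outside $\cM_{n-1}$, namely its own $X$ and $Z$. Hence every internal $Y$ has exactly one path neighbour inside $\cM_{n-1}$, and the path meets $\cM_{n-1}$ only in pieces of at most two vertices. Moreover, in each $\cM_j$ all $(2m-3)!!$ matchings with $n-1$ unmatched must be piece endpoints, paired up globally so that no cycles arise. This is a problem of the same kind as the original, one level down, and nothing in the proposal solves it.

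Note that this single-vertex decomposition is exactly what works in Theorem~\ref{thm:COd-hconn} for $d\ge 2$. There, the class with the removed vertex unmatched is itself a flip graph handled by induction. For $d=1$ it degenerates into the independent set you are fighting.

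\textbf{The missing idea.} The paper avoids any path-cover strengthening by removing the two vertices $n-1,n$ at once, going from $m$ to $m-2$. Consider the classes $\cX_{i,j}$ (edges $\{n-1,i\},\{n,j\}$), $\cY_i$ ($n$ unmatched, $\{n-1,i\}$ present), $\cY_i'$ ($n-1$ unmatched, $\{n,i\}$ present) and $\cZ_i$ (edge $\{n-1,n\}$, $i$ unmatched); all of them are copies of $\cM_{m-2}$. Take $M\in\cX_{i,i+1}$ with unmatched vertex $k$, and unmatch successively $n$, $i$, $n-1$, $k$. This passes through $\cY_i$, $\cZ_i$, $\cY_i'$ and ends at the matching of $\cX_{i+1,i}$ that differs from $M$ only by swapping the neighbours of $n-1$ and $n$.

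Thus the problematic sets are absorbed locally, with one such 4-step ``tick'' per matching. Take a Hamilton path $Q$ of $\cX_{i,i+1}$, given by plain Hamilton-connectedness, and traverse all five sets in zigzag order along $Q$. Moves between consecutive ticks happen in the layers $\cX_{i,i+1}$ and $\cX_{i+1,i}$, both of which carry a copy of $Q$. Since $|\cM_{m-2}|$ is odd, the zigzag ends in $\cX_{i+1,i}$.

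The rest is bookkeeping. The classes are ordered by a Hamilton path of the auxiliary graph on ordered pairs that uses every diagonal edge $(i,i+1)\to(i+1,i)$ (Lemma~\ref{lem:pairs}). There is a case split on whether $M,M'$ share an edge. Base cases $m\le 3$ are needed because the step goes back by two.
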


An efficient C++ implementation of the construction in the following proof can be found at~\cite{cos_match}.

In preparation for the proof, we need the following definitions and lemma.
For $n\geq 2$, we define $P_n \coloneq \{(i,j) \mid i,j\in [n] \text{ and } i\neq j\}$ as the set of ordered pairs of distinct entries.
We define a graph~$G_n$ with vertex set~$P_n$ by adding the edges $\{(i,j),(i,k)\}$ and $\{(i,j),(k,j)\}$ for all $i\in[n]$, $j\in[n]\setminus\{i\}$, $k\in[n]\setminus\{i,j\}$, referred to as \defi{grid edges}, and the edges $\{(i,i+1),(i+1,i)\}$ for all $i\in[n]$, referred to as \defi{diagonal edges}; see Figure~\ref{fig:G5}.
All these operations are understood modulo~$n$, with $1,\ldots,n$ as representatives of the equivalence classes.
In particular, the graph~$G_n$ contains the diagonal edge~$\{(1,n),(n,1)\}$.
The graph~$G_n$ captures the operations of changing a single entry in each pair $(i,j)\in P_n$ (grid edges), and of swapping the order of the two entries of the pairs~$(i,i+1)$ for $i\in[n]$ (diagonal edges).

\begin{figure}[h!]
\includegraphics{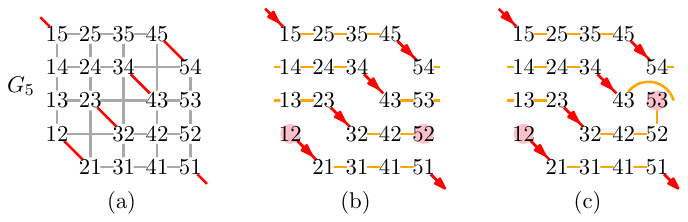}
\caption{(a) Illustration of the graph~$G_5$.
Horizontal and vertical lines indicate grid edges between any two vertices with the same ordinate or abscissa, respectively, whereas red edges are the diagonal edges.
(b)+(c) Illustration of the two Hamilton paths mentioned in Lemma~\ref{lem:pairs}.}
\label{fig:G5}
\end{figure}

\begin{lemma}
\label{lem:pairs}
For any $n\geq 5$, the graph~$G_n$ has a Hamilton path that starts at~$(1,2)$, ends at~$(n,2)$, and contains all diagonal edges, visiting any two pairs $(i,i+1),(i+1,i)$ for $i\in[n]$ in that order.
The same statement holds for the ending pair~$(n,3)$ instead of~$(n,2)$.
\end{lemma}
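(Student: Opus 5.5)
The plan is to build the Hamilton path explicitly, essentially row by row. Call the set of pairs with first entry~$i$ the \emph{row}~$i$. Each row is a clique of $n-1$ vertices under grid edges, and so is each column. The diagonal edge $\{(i,i+1),(i+1,i)\}$ joins row~$i$ to row~$i+1$ cyclically. The path must start at $(1,2)$ and traverse $(1,2),(2,1)$ in that order, so the first step is forced to be this diagonal edge into row~2.

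\textbf{Middle rows.} For $i=2,\ldots,n-1$, enter row~$i$ at $(i,i-1)$ via the diagonal edge from $(i-1,i)$. Visit all $n-1$ vertices of row~$i$ in any order, which is possible because the row is a clique. Finish at $(i,i+1)$ and take the diagonal edge to $(i+1,i)$. This uses the diagonal edges for $i=1,\ldots,n-1$ in the required orientation and arrives at $(n,n-1)$.

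\textbf{Rows $n$ and~$1$.} What remains unvisited is row~$n$ and row~$1$ minus $(1,2)$. We must still use the diagonal edge $(n,1)\to(1,n)$. The plan is to traverse part of row~$n$ ending at $(n,1)$, cross to $(1,n)$, and sweep the remaining vertices of row~$1$ ending at some $(1,j)$. We then take the column edge $(1,j)\to(n,j)$ and finish row~$n$ at the prescribed endpoint.
\begin{itemize}
\item For endpoint $(n,2)$, use
$(n,n-1),(n,1),(1,n),(1,n-1),\ldots,(1,3),(n,3),(n,4),\ldots,(n,n-2),(n,2)$.
One checks directly that this covers row~$n$ and row~$1\setminus\{(1,2)\}$ exactly once for every $n\geq5$.
\item For endpoint $(n,3)$, use
$(n,n-1),(n,2),(n,1),(1,n),(1,3),(1,n-1),\ldots,(1,4),(n,4),(n,5),\ldots,(n,n-2),(n,3)$.
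Here the column edge leaves row~$1$ at $j=4$. This requires $4\neq n-1$, i.e.\ $n\geq6$, and it covers everything; the row-$n$ run $(n,5),\ldots,(n,n-2)$ is empty when $n=6$.
\end{itemize}

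\textbf{The obstacle $n=5$, endpoint $(5,3)$.} This is the one case where the scheme above fails, because $(5,4)$ is both the arrival vertex in row~$5$ and the only usable column exit. Here I would perturb the middle part instead. One option is to leave some middle row through a column edge into row~$5$ or row~$1$ and return later, freeing up the row-$5$ vertices. Alternatively, since $G_5$ has only $20$ vertices, one can exhibit an explicit Hamilton path by hand, as in Figure~\ref{fig:G5}. I expect this small-case patch to be the only real difficulty; the general construction is routine verification.
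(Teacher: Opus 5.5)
Your row-by-row construction is valid wherever you actually carry it out. The middle rows work because each row is a clique and $(i,i-1)\neq(i,i+1)$. Both final segments check out: the one for $(n,2)$ for all $n\geq 5$, and the one for $(n,3)$ for $n\geq 6$. This is a different route from the paper. The paper sweeps column by column with the rule $(i,j)\to(i+1,j)$ for $j\neq i+1$ and $(i,i+1)\to(i+1,i)$ otherwise. It then obtains the $(n,3)$ version from the $(n,2)$ version by a local modification: the shortcut $\{(n-1,3),(1,3)\}$ plus the appended edge $\{(n,2),(n,3)\}$. Your version uses the clique structure of the rows instead, and leaves free choice inside rows $2,\ldots,n-1$.

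The gap is the case $n=5$ with endpoint $(5,3)$. The lemma is claimed for all $n\geq 5$, but here you only suggest remedies (``perturb the middle part'', ``exhibit a path by hand'') without producing a path, so this case is unproved as written. The paper only ever applies the lemma to $G_{2m-1}$ with $m\geq 4$, so $n=5$ is not used later, but it is part of the statement.

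Your diagnosis is also not right. $(5,4)$ is the only column exit only because you placed $(1,3)$ directly after $(1,n)$. Instead, visit the leftover vertices of row $n$ before crossing the diagonal, and leave row~$1$ at $(1,3)$ straight into the target. The final segment
\[
(n,n-1),(n,2),(n,4),(n,5),\ldots,(n,n-2),(n,1),(1,n),(1,n-1),\ldots,(1,3),(n,3)
\]
uses only row edges, the diagonal $(n,1)\to(1,n)$, and the column edge $(1,3)\to(n,3)$. It covers row $n$ and row $1$ minus $(1,2)$ exactly once for every $n\geq 5$. For $n=5$ the run $(n,4),\ldots,(n,n-2)$ is empty, giving $(5,4),(5,2),(5,1),(1,5),(1,4),(1,3),(5,3)$. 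With this single segment for all $n\geq 5$, no small-case patch is needed and your proof is complete.
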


\begin{proof}
A path~$P$ from~$(1,2)$ to~$(n,2)$ satisfying those constraints is obtained from the rule $(i,j) \rightarrow(i+1,j)$ if $j\neq i+1$ and $(i,i+1) \rightarrow (i+1,i)$ otherwise (modulo $n$); see Figure~\ref{fig:G5}~(b).
A path~$P'$ from~$(1,2)$ to~$(n,3)$ is obtained from~$P$ by replacing the two edges~$\{(n-1,3),(n,3)\}$ and $\{(n,3),(1,3)\}$ by the shortcut edge $\{(n-1,3),(1,3)\}$, and by adding the edge $\{(n,2),(n,3)\}$ at the end; see Figure~\ref{fig:G5}~(c).
\end{proof}

\begin{figure}[b!]
\includegraphics[page=1,scale=0.8]{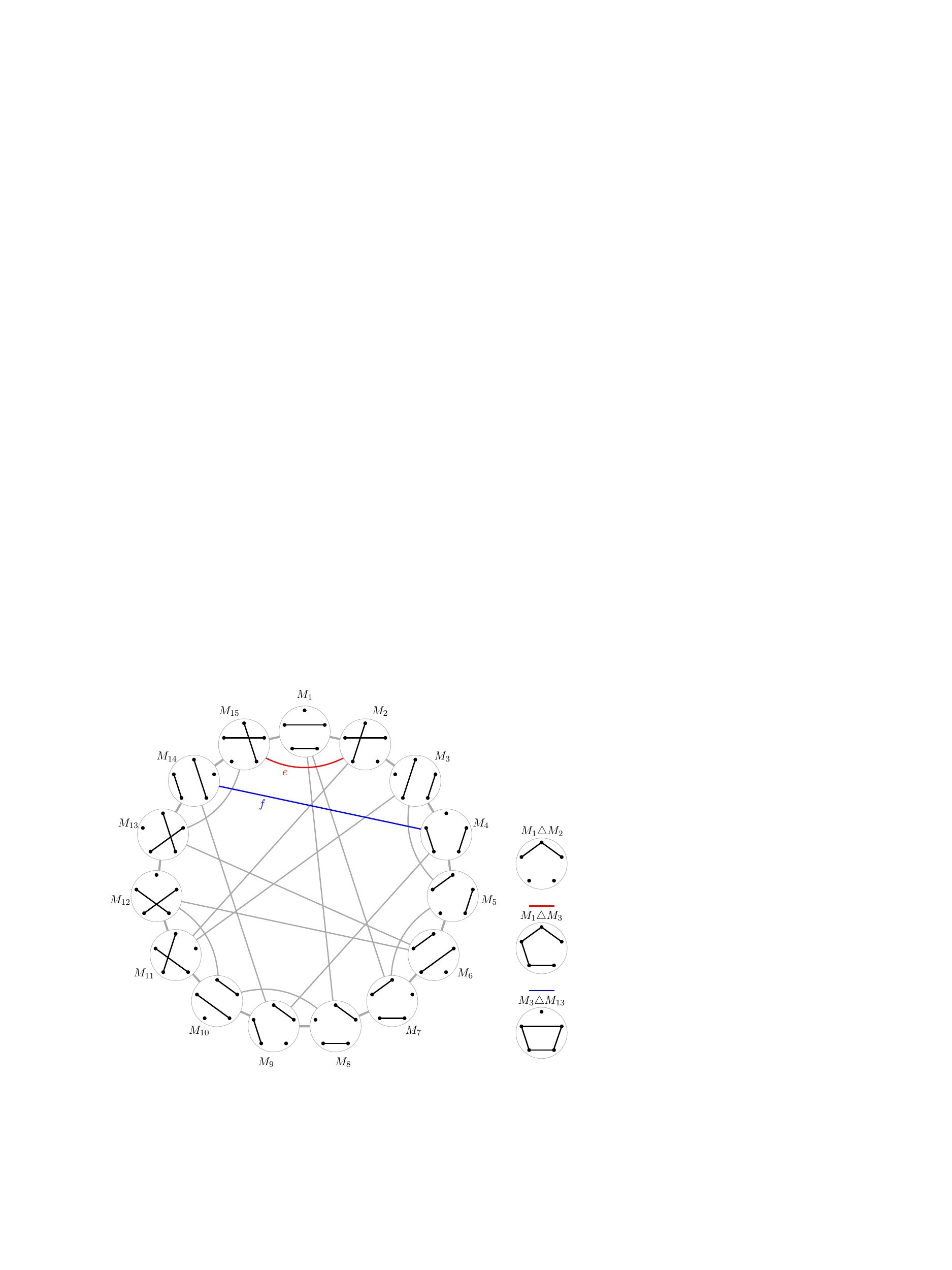}
\caption{The flip graph of almost-perfect matchings of~$K_5$ under 1-flips, with a Hamilton cycle on the perimeter.
By small modifications one obtains Hamilton paths between the pairs of matchings shown at the right.
The graphs obtained by taking the symmetric difference are drawn with the 5~vertices permuted suitably, to show the three different isomorphism classes more clearly (2-path, 4-path, or 4-cycle).}
\label{fig:K5-1flips}
\end{figure}

\begin{figure}[t!]
\makebox[0cm]{ 
\includegraphics[page=3,scale=0.6]{G57}
}
\caption{A symmetric Hamilton cycle in the flip graph of almost-perfect matchings of~$K_7$ under 1-flips.
The matchings in the $i$th row, $i=1,\ldots,6$, are obtained by rotating those in the top row (=row~0) by $2\pi/7\cdot i$ in clockwise direction (not all rotated matchings are shown).
By small modifications one obtains Hamilton paths between the pairs of matchings shown at the bottom.
The graphs obtained by taking the symmetric difference are drawn with the 7~vertices permuted suitably, to show the six different isomorphism classes more clearly (2-path, 4-path, 6-path, 4-cycle, 6-cycle, 2-path plus 4-cycle).}
\label{fig:K7-1flips}
\end{figure}

\begin{proof}[Proof of Theorem~\ref{thm:CO1-hconn}]
We denote the set of almost-perfect matchings of~$K_n$, $n=2m+1$, i.e., matchings with $m$ edges, by~$\cM_m$.
To cover the different possibilities of combining a start and target matching, note that the symmetric difference~$M\triangle M'$ of the edge sets of any two distinct matchings~$M,M'\in\cM_m$ is a non-empty union of cycles and at most one path, all of even length.
Since permutating the vertices of~$K_n$ is obviously a symmetry, all the following arguments only distinguish the isomorphism type of~$M\triangle M'$.

We prove the statement by induction on~$m$.
We first settle the base cases $m\in\{1,2,3\}$.

For $m=1$, the flip graph is a triangle, so the statement holds. 

For $m=2$, the flip graph is shown in Figure~\ref{fig:K5-1flips}, with the Hamilton cycle~$C\coloneq (M_1,\ldots,M_{15})$ on the perimeter.
In the following, we will need the edges $e\coloneq \{M_2,M_{15}\}$ and~$f\coloneq \{M_4,M_{14}\}$.
Given two distinct matchings~$M,M'\in\cM_2$, the symmetric difference~$M\triangle M'$ is either a 2-path, a 4-path, or a 4-cycle.
Hamilton paths covering each of these three cases are given as follows (the symmetric difference between start and target matching is shown in the right part of the figure, with a suitable permutation of the vertices):
\begin{itemize}[leftmargin=4mm,topsep=1mm]
\item from~$M_1$ to $M_2$: $C\setminus \{\{M_1,M_2\}\}$;
\item from~$M_1$ to~$M_3$: $(C\setminus \{\{M_1,M_{15}\},\{M_2,M_3\}\})\cup \{e\}$;
\item from~$M_3$ to~$M_{13}$: $(C\setminus\{\{M_3,M_4\},\{M_{13},M_{14}\}\})\cup \{f\}$.
\end{itemize}

For $m=3$, the $|\cM|=105$ matchings split into 15 equivalence classes under rotations, each of size~7.
A path visiting one representative $M_1,\ldots,M_{15}$ of each class is shown in the top row of Figure~\ref{fig:K7-1flips}.
Applying one more flip visits~$M_{16}$, a rotated copy of~$M_1$, i.e., it leads back to the first equivalence class.
Therefore, repeatedly applying the same (rotated) flips yields a Hamilton cycle~$C\coloneq (M_1,\ldots,M_{105})$ in the flip graph.
In the following we will need the edges $e\coloneq \{M_7,M_{50}\}$, $f\coloneq \{M_2,M_{49}\}$, $g\coloneq \{M_2,M_9\}$, $h_1\coloneq \{M_{10},M_{95}\}$ and $h_2\coloneq \{M_{11},M_{105}\}$.
Given two distinct matchings~$M,M'\in\cM_3$, the symmetric difference~$M\triangle M'$ is either a 2-path, a 4-path, a 6-path, a 4-cycle, a 6-cycle, or a 2-path plus a 4-cycle.
Hamilton paths covering each of these six cases are given as follows (the symmetric difference between start and target matching is shown at the bottom of the figure, with the vertices suitably permuted):
\begin{itemize}[leftmargin=4mm,topsep=1mm]
\item from~$M_1$ to~$M_2$: $C\setminus \{\{M_1,M_2)\}\}$.
\item from~$M_6$ to~$M_{49}$: $(C\setminus \{\{M_6,M_7\},\{M_{49},M_{50}\}\})\cup\{e\}$.
\item from~$M_3$ to~$M_{50}$: $(C\setminus \{\{M_2,M_3\},\{M_{49},M_{50}\}\})\cup\{f\}$.
\item from~$M_3$ to~$M_{10}$: $(C\setminus \{\{M_2,M_3\},\{M_9,M_{10}\}\}\}\cup\{g\}$.
\item from~$M_1$ to~$M_6$: $(C\setminus \{\{M_1,M_2\},\{M_6,M_7\},\{M_{49},M_{50}\}\})\cup\{e,f\}$.
\item from~$M_1$ to~$M_{94}$: $(C\setminus \{\{M_1,M_{105}\},\{M_{94},M_{95}\},\{M_{10},M_{11}\}\})\cup\{h_1,h_2\}$.
\end{itemize}

For the induction step, let $m\geq 4$, and note that the induction step goes back by 2~steps from~$m$ to~$m-2$.
Let $M,M'\in\cM_m$ be two distinct matchings.

Case~(a): We first consider the case that~$M$ and~$M'$ have at least one edge in common.
We label the vertices $1,\ldots,n$ as follows; see Figure~\ref{fig:tick1}~(a):
The end vertices of the edge that is common to~$M$ and~$M'$ receive the labels~2 and~$n$.
Furthermore, we choose two consecutive edges from the symmetric difference~$M\triangle M'$ and label the three vertices $1,n-1,n-2$ such that $\{n-1,1\}\in M$ and~$\{n-1,n-2\}\in M'$.
The remaining vertex labels~$3,\ldots,n-3$ are assigned arbitrarily.

We partition the set~$\cM_m$ into several sets of matchings as follows; see Figure~\ref{fig:tick2}:
\begin{itemize}[leftmargin=4mm,topsep=1mm]
\item $\cX_{i,j}$ for $i,j\in[n-2]$, $i\neq j$: matchings in which both edges~$\{n-1,i\}$ and~$\{n,j\}$ are present.
\item $\cY_i$ for $i\in[n-2]$: matchings in which vertex~$n$ is unmatched and the edge~$\{n-1,i\}$ is present.
\item $\cY_i'$ for $i\in[n-2]$: matchings in which vertex~$n-1$ is unmatched and the edge~$\{n,i\}$ is present.
\item $\cZ_i$ for $i\in[n-2]$: matchings in which vertex~$i$ is unmatched and the edge~$\{n-1,n\}$ is present.
\end{itemize}
Note that $M\in\cX_{1,2}$ and $M'\in\cX_{n-2,2}$, and furthermore $|\cX_{i,j}|=|\cY_i|=|\cY_i'|=|\cZ_i|=|\cM_{m-2}|$ (and these cardinalities are independent of $i$ and~$j$).

\begin{figure}
\includegraphics[page=1]{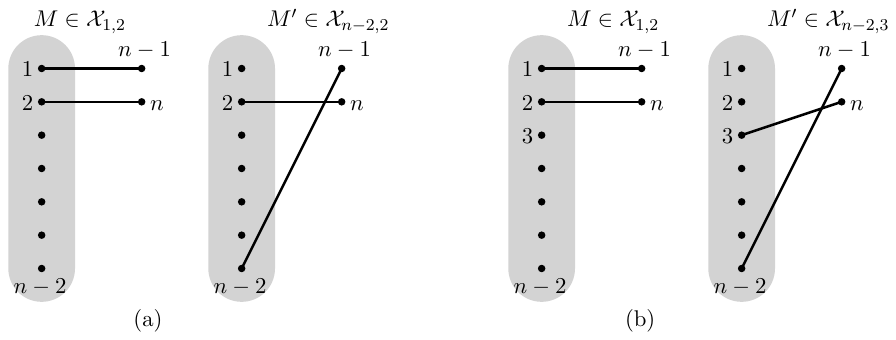}
\caption{Illustration of the two cases in the proof of Theorem~\ref{thm:CO1-hconn} and the corresponding labeling of vertices in the start and target matchings.}
\label{fig:tick1}
\end{figure}

\begin{figure}[b!]
\makebox[0cm]{ 
\includegraphics[page=2]{tick}
}
\caption{Modified flip sequence to include matchings in the sets~$\cY_i\cup \cZ_i\cup \cY_i'$.}
\label{fig:tick2}
\end{figure}

We apply Lemma~\ref{lem:pairs} to obtain a listing~$L$ of all pairs $(i,j)$, $i,j\in[n-2]$, $i\neq j$, that starts at $(i,j)=(1,2)$ and ends at $(n-2,2)$ such that all pairs $(i,i+1),(i+1,i)$ for $i\in[n-2]$ appear consecutively.
The ordering~$L$ determines the order in which we visit the matchings of~$\cM_m$, namely, it determines the neighbors to which the vertices $n-1$ and~$n$ are matched.
Specifically, for every pair~$(i,j)$ in~$L$ with $j\neq i\pm 1$, we visit all matchings in the set~$\cX_{i,j}$, and for every two consecutive pairs~$(i,i+1),(i+1,i)$ in~$L$, we visit all matchings in~$\cX_{i,i+1}\cup \cY_i\cup \cZ_i\cup \cY_i'\cup \cX_{i+1,i}$.

We now describe these two steps in more detail; see Figures~\ref{fig:tick-tick} and~\ref{fig:M91} for illustrations.
We first define~$A_{1,2}\coloneq M$ and $B_{n-2,2}\coloneq M'$.

\begin{figure}[t!]
\includegraphics{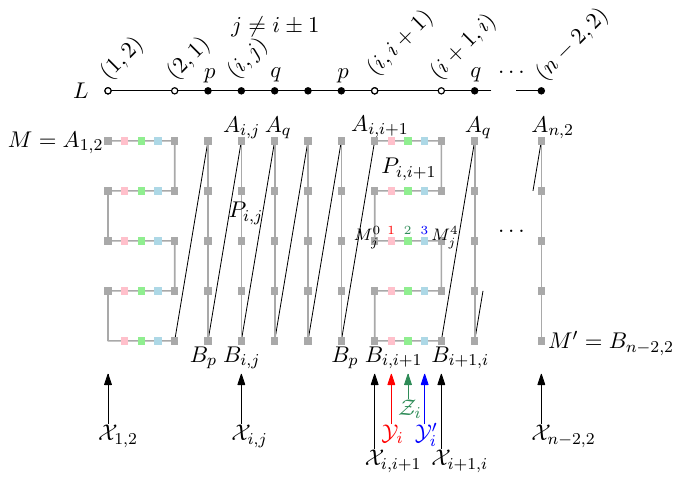}
\caption{Structure of the resulting Hamilton path.}
\label{fig:tick-tick}
\end{figure}

For every pair~$(i,j)$ in~$L$ with $j\neq i\pm 1$, we select two matchings~$A_{i,j},B_{i,j}\in\cX_{i,j}$, as follows:
Let $p$ and~$q$ be the pairs directly before and after~$(i,j)$ in~$L$, respectively (if they exist).
We let $A_{i,j}$ be the matching obtained from~$B_p$ by the uniquely determined 1-flip that matches $n-1$ and~$n$ to $i$ and~$j$, respectively (one of them already has the correct neighbor).
Unless $(i,j)=(n-2,2)$ is the last pair, we let $B_{i,j}$ be an arbitrary matching distinct from~$A_{i,j}$, such that if $q=(k,j)$ or $q=(i,k)$, then the vertex~$k$ is unmatched in~$B_{i,j}$.
This distinct choice is possible even if the unmatched vertex is the same in $A_{i,j}$ and~$B_{i,j}$, by the assumption that $m\geq 4$, as two matching edges can be chosen freely.

\begin{figure}[b!]
\makebox[0cm]{ 
\includegraphics[page=1,scale=1.4]{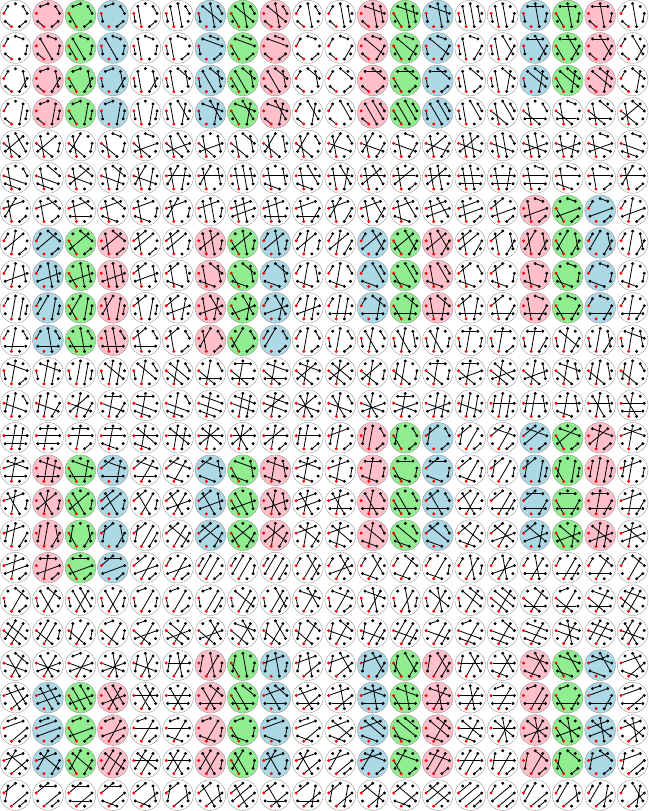}
}
\caption{Example of the construction for $m=4$ (read line by line from top to bottom); matchings from the sets $\cY_i,\cZ_,\cY_i'$ are highlighted in red, green, blue, respectively, as in Figures~\ref{fig:tick2} and~\ref{fig:tick-tick}, and the two vertices~$n$ and~$n-1$ separated in the induction step are drawn in red (figure continues on next page).}
\label{fig:M91}
\end{figure}

\addtocounter{figure}{-1}
\begin{figure}[t!]
\makebox[0cm]{ 
\includegraphics[page=2,scale=1.4]{m9}
}
\caption{(figure continued from previous page).}
\label{fig:M92}
\end{figure}

For every two consecutive pairs~$(i,i+1),(i+1,i)$ in~$L$, we select~$A_{i,i+1}\in\cX_{i,i+1}$ and $B_{i+1,i}\in\cX_{i+1,i}$, as follows:
Let $p$ and~$q$ be the pairs directly before~$(i,i+1)$ and after~$(i+1,i)$ in~$L$, respectively (if they exist).
Unless $(i,i+1)=(1,2)$ is the first pair, we let $A_{i,i+1}$ be the matching obtained from~$B_p$ by the uniquely determined 1-flip that matches $n-1$ and~$n$ to $i$ and~$i+1$, respectively (one of them already has the correct neighbor).
We then let $B_{i,i+1}\in\cX_{i,i+1}$ be an arbitrary matching distinct from~$A_{i,i+1}$, such that if $q=(k,i)$ or $q=(i+1,k)$, then the vertex~$k$ is unmatched in~$B_{i,i+1}$.
This is possible even if the unmatched vertex is the same in $A_{i,i+1}$ and~$B_{i,i+1}$, by the assumption that $m\geq 4$, as two matching edges can be chosen freely.
We then let $B_{i+1,i}$ be the matching obtained from~$B_{i,i+1}$ by exchanging the neighbors of the vertices~$n-1$ and~$n$ (to be $i+1$ and~$i$ instead of $i$ and~$i+1$). 

We note that for the last two pairs $p,(n-2,2)$ in~$L$, as $m\geq 4$, there are at least two choices for~$B_p$ in each of the two cases before, leading to two different possibilities for $A_{n-2,2}$, at least one of which is different from the prescribed last matching~$B_{n-2,2}=M'$.

By induction, for every pair~$(i,j)$ in~$L$ with $j\neq i\pm 1$, there is a path~$P_{i,j}$ in the flip graph from~$A_{i,j}$ to~$B_{i,j}$ through all matchings in~$\cX_{i,j}$.
Indeed, since the edges $\{n-1,i\}$ and $\{n,j\}$ are fixed, the set~$\cX_{i,j}$ and 1-flips between them is isomorphic to the flip graph on~$\cM_{m-2}$.

For every two consecutive pairs~$(i,i+1),(i+1,i)$ in~$L$, by induction there is a path~$Q=(M_1,\ldots,M_\ell)$ in the flip graph from~$A_{i,i+1}$ to~$B_{i,i+1}$ through all matchings in~$\cX_{i,i+1}$.
We modify this path so that it also includes all matchings in $\cY_i\cup \cZ_i\cup \cY_i'\cup \cX_{i+1,i}$ and ends at~$B_{i+1,i}$.
Indeed, starting with any matching~$M_j^0\coloneq M_j\in\cX_{i,i+1}$, $j\in[\ell]$, with unmatched vertex~$k\in[n-2]\setminus\{i,i+1\}$, the flip sequence of unmatching first~$n$, then~$i$, then~$n-1$, then~$k$, creates four additional matchings $M_j^1\coloneq u(M_j^0,n),M_j^2\coloneq u(M_j^1,i),M_j^3\coloneq u(M_j^2,n-1),M_j^4\coloneq u(M_j^3,k)$.
By definition we have that $M_j^1\in\cY_i$, $M_j^2\in\cZ_i$, $M_j^3\in\cY_i'$, $M_j^4\in\cX_{i+1,i}$, i.e., $M_j^4$ differs from~$M_j^0$ only in exchanging the neighbors of~$n-1$ and~$n$; see Figure~\ref{fig:tick2}.
We define $M_j^*\coloneq (M_j^0,\ldots,M_j^4)$ for all $j\in[\ell]$.
Using that $\ell=|\cM_{m-2}|$ is always odd, we see that the sequence $P_{i,i+1}\coloneq (M_1^*,\rev(M_2^*),M_3^*,\rev(M_4^*),\ldots,\rev(M_{\ell-1}^*),M_\ell^*)$ is a path through all matchings in~$\cX_{i,i+1}\cup \cY_i\cup \cZ_i\cup \cY_i'\cup \cX_{i+1,i}$ that starts at~$A_{i,i+1}$ and ends at~$B_{i+1,i}$.

The concatenation of the paths~$P_{i,j}$ for all pairs $(i,j)$ with $j\neq i\pm 1$ and of the paths $P_{i,i+1}$ for the consecutive pairs~$(i,i+1),(i+1,i)$, in the order in which they appear along~$L$, is the desired Hamilton path in the entire flip graph.

Case~(b): It remains to consider the case that $M$ and~$M'$ have no edges in common.
We label the vertices $1,\ldots,n$ as follows; see Figure~\ref{fig:tick1}~(b):
We choose two vertex-disjoint pairs of consecutive edges from the symmetric difference~$M\triangle M'$ and label the three vertices $1,n-1,n-2$ and $2,n,3$ such that $\{n-1,1\},\{n,2\}\in M$ and~$\{n-1,n-2\},\{n,3\}\in M'$.
The remaining vertex labels~$4,\ldots,n-3$ are assigned arbitrarily.
From this point the proof proceeds similarly to before, using instead a listing~$L$ of pairs that starts at~$(1,2)$ and ends at~$(n-2,3)$, as provided by Lemma~\ref{lem:pairs}.
\end{proof}

We generalize Theorem~\ref{thm:CO1-hconn} as follows.

\begin{theorem}
\label{thm:COd-hconn}
Let $m\geq 1$, $d\geq 1$ and $n\coloneq 2m+d$.
The flip graph of $m$-edge matchings of~$K_n$ under 1-flips is Hamilton-connected.
\end{theorem}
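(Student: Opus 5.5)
We prove Theorem~\ref{thm:COd-hconn} by induction on~$d$, with Theorem~\ref{thm:CO1-hconn} as the base case $d=1$.

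For the step from $d-1$ to $d\geq 2$, let $\cN_{m,d}$ denote the set of $m$-edge matchings of~$K_n$, $n=2m+d$. Fix a vertex~$v$ and partition $\cN_{m,d}$ into two parts:
\begin{itemize}[leftmargin=4mm,topsep=1mm]
\item the set $\cU$ of matchings in which $v$ is unmatched; this is a copy of $\cN_{m,d-1}$ on $K_{n-1}$;
\item for each $w\neq v$, the set $\cX_w$ of matchings containing the edge~$\{v,w\}$; this is a copy of $\cN_{m-1,d}$ on $K_{n-2}$.
\end{itemize}
A 1-flip leaving~$\cX_w$ towards~$\cU$ unmatches~$v$, i.e.\ replaces $\{v,w\}$ by $\{w,u\}$ for some unmatched~$u$. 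A flip from $\cX_w$ to $\cX_{w'}$ replaces $\{v,w\}$ by $\{v,w'\}$ for an unmatched~$w'$. Since $d\geq 2$, every matching has at least two unmatched vertices, so these moves are abundant. This suggests an induction on~$m$ inside the induction on~$d$: the blocks $\cX_w$ are handled by induction on~$m$ (same~$d$), and the block~$\cU$ by induction on~$d$. The case $m=1$ is the line graph of~$K_n$ restricted appropriately, namely the graph on edges of~$K_n$ where two edges are adjacent iff they share a vertex. This is the line graph of a complete graph, and its Hamilton-connectedness is easy to verify directly; alternatively one can check it from the Johnson graph $J(n,2)$.

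Given distinct start and target matchings $M,M'$, choose~$v$ and an ordering of the blocks. We choose~$v$ so that $M$ and~$M'$ lie in different blocks whenever possible. For example, if some vertex is matched in~$M$ but unmatched in~$M'$, take~$v$ to be that vertex. If not, $M$ and~$M'$ have the same unmatched set. In that case, if some vertex has different partners in $M$ and~$M'$, take~$v$ to be it, so that $M\in\cX_w$ and $M'\in\cX_{w'}$ with $w\neq w'$. Such a vertex always exists since $M\neq M'$.

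We then traverse the blocks in an order starting at the block of~$M$ and ending at the block of~$M'$, with $\cU$ placed somewhere in the middle. Within each block we choose entry and exit matchings $A,B$ greedily, as in the proof of Theorem~\ref{thm:CE2-hconn}, so that $A\neq B$ and consecutive blocks are joined by a 1-flip. Each block has a Hamilton path from $A$ to~$B$ by induction. The transitions $\cX_w\to\cX_{w'}$ require $w'$ unmatched in the exit matching of~$\cX_w$. The transitions into and out of~$\cU$ require the exit matching of $\cX_w$ to have some unmatched~$u$, which always holds, and the exit matching of~$\cU$ to have the next~$w$ unmatched, after which the flip matches $v$ to a matched neighbour. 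All of these conditions are satisfiable because two or more vertices are free and the subgraphs are large.

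The main obstacle is ensuring the freedom needed for the final transition: the entry matching of the last block must differ from the prescribed~$M'$. As in the proof of Theorem~\ref{thm:CE2-hconn}, we handle this by having at least two choices for the exit matching of the penultimate block, leading to two distinct entry candidates. Small cases, where a block has very few elements (e.g.\ $m-1=0$, so that $\cX_w$ is a single matching), need separate care. For $m=1$ we argue directly. For $m\geq 2$, we order the blocks so that singleton-like blocks never need $A\neq B$. Since $|\cX_w|\geq 3$ when $m\geq 2$ and $d\geq 2$ (as $n-2\geq 4$), this is unproblematic.
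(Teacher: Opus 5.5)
Your proposal follows the paper's proof essentially verbatim: the same partition by the status of one fixed vertex (unmatched, or matched to $w$), and the same choice of that vertex from the structure of $M\triangle M'$ so that $M$ and $M'$ lie in different blocks. It also uses the same greedy choice of entry and exit matchings, with two candidates in the penultimate block. The one difference is that the paper extends the induction to the trivial case $m=0$ and lets singleton blocks pass through the same argument, rather than treating $m=1$ separately via Hamilton-connectedness of the line graph of $K_n$. One slip to fix: for a transition from $\mathcal{U}$ into $\cX_w$, the exit matching of $\mathcal{U}$ must have $w$ \emph{matched}, so that the flip replaces $\{k,w\}$ by $\{v,w\}$; you wrote unmatched.
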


It turns out that the case~$d=1$ is the hardest one, and is settled by Theorem~\ref{thm:CO1-hconn}, whereas all other cases can be settled by a straightforward induction.

\begin{proof}
We also extend the statement to the trivial case~$m=0$, in which case there is only one matching (the empty set), i.e., the flip graph is a single vertex.
On the other hand, for any $m\geq 1$ and $d=1$ the statement is established by Theorem~\ref{thm:CO1-hconn}.
To prove the remaining cases, we argue by induction on~$m$ and~$d$.

Specifically, let $m\geq 1$, $d\geq 2$ and $n\coloneq 2m+d$, and let $M$ and~$M'$ be two distinct $m$-edge matchings of~$K_n$.
We label the vertices~$1,\ldots,n$ as follows:
If $M\triangle M'$ contains a path, we label the vertices of one of its terminal edges~$1,2$ such that vertex~1 is unmatched in~$M$ and $\{1,2\}\in M'$.
If $M\triangle M'$ contains no paths, but only cycles, then we consider two consecutive edges on one of them and label their vertices $2,1,3$ such that $\{1,2\}\in M$ and $\{1,3\}\in M'$.
The remaining vertex labels~$3,\ldots,n$ or $4,\ldots,n$, respectively, are assigned arbitrarily.

We partition the set of all $m$-edge matchings of~$K_n$ according to whether the vertex~1 is matched, and if so, according to the edge incident with vertex~1.
Specifically, we let $\cM_1$ be the set of matchings in which vertex~1 is unmatched, and for $i=2,\ldots,n$ we let $\cM_i$ be the set of matchings that contain the fixed edge~$\{1,i\}$.
We choose a total ordering~$\cN=(\cN_1,\ldots,\cN_n)$ of the sets $\cM_1,\ldots,\cM_n$ such that $M\in\cN_1$ and~$M'\in\cN_n$.
We then choose pairs of matchings $A_i,B_i\in\cN_i$ for $i=1,\ldots,n$ satisfying the following conditions:
\begin{itemize}[topsep=1mm,leftmargin=4mm]
\item $A_1=M$ and $B_n=M'$;
\item if $|\cN_i|>1$ then $A_i\neq B_i$ for $i=1,\ldots,n$;
\item $A_{i+1}$ differs from $B_i$ by a 1-flip for all $i=1,\ldots,n-1$.
\end{itemize}
If $m=1$, then if $\cN_i=\cM_j$ for some $j\in[2,n]$ we have $|\cN_i|=1$, so~$A_i$ and~$B_i$ are uniquely determined, and if $\cN_i=\cM_1$ and $\cN_{i+1}=\cM_j$ for some~$j\in[2,n]$, then we choose $B_i$ as a matching in which vertex~$j$ is matched.
If $m\geq 2$, on the other hand, then each set~$\cN_i$, $i=1,\ldots,n-1$, contains at least three matchings for which a 1-flip leads to a matching in~$\cN_{i+1}$.
Specifically, if $\cN_i=\cM_j$ for some $j\in[2,n]$ and $\cN_{i+1}=\cM_k$ for some $k\in[2,n]$, then these are matchings in which vertex~$k$ is unmatched.
If $\cN_i=\cM_j$ for some $j\in[2,n]$ and~$\cN_{i+1}=\cM_1$, then all matchings in~$\cN_i$ have this property.
Lastly, if $\cN_i=\cM_1$ and~$\cN_{i+1}=\cM_j$ for some $j\in[2,n]$, then these are matchings in which vertex~$j$ is matched.
Thus, when $A_{n-1}$ is chosen, there are at least two choices for~$B_{n-1}$, and not both of their neighbors in~$\cN_n$ can be equal to the prescribed last matching $B_n=M'$, so one of the neighbors can become~$A_n$.

By induction, for $i=1,\ldots,n$ there is a path~$P_i$ in the flip graph from~$A_i$ to~$B_i$ through all matchings in~$\cN_i$.
Indeed, if $\cN_i=\cM_1$, then the induction step fixes the unmatched vertex~1 (so $m$ is unchanged and $d$ is decremented by~1), whereas if $\cN_i=\cM_j$ for some $j\in[2,n]$, then the induction step fixes the edge~$\{1,i\}$ (so $m$ is decremented by~1 and $d$ is unchanged).
The concatenation $P_1,P_2,\ldots,P_n$ is the desired Hamilton path in the entire flip graph.
\end{proof}

\subsection{Perfect matchings in complete bipartite graphs}

Perfect matchings of~$K_{n,n}$ are in one-to-one correspondence with permutations of the set~$[n]$; see Figure~\ref{fig:bip}~(a).
Indeed, if we label the vertices in both partition classes by $1,\ldots,n$, then every perfect matching is a collection of edges $(i,\pi(i))$, where $i$ is a vertex from the first class and $\pi(i)$ is its matching partner in the second class, which we can encode compactly as the permutation $(\pi(1),\ldots,\pi(n))$ in one-line notation.
Consequently, those matchings are counted by the factorial numbers~$n!$.
Under this bijection, 2-flips in~$K_{n,n}$ corresponds to transpositions in the permutations.
Specifically, a 2-flip involving the edges $(i,\pi(i))$ and $(j,\pi(j))$ corresponds to the transposition $\pi(i)\leftrightarrow \pi(j)$.

\begin{theorem}[Tchuente \cite{MR683982}]
\label{thm:tchuente}
For any~$n\geq 4$ and any transposition tree on~$[n]$, the flip graph of permutations of~$[n]$ under transpositions from this tree is Hamilton-laceable.
\end{theorem}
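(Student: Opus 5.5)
Tchuente's theorem is cited from the literature rather than proved in the paper, so what follows is a plan for a self-contained proof, by induction on~$n$. The Cayley graph of~$S_n$ generated by the transpositions of a tree~$T$ is bipartite, with the even and odd permutations as its two classes, since each generator is an odd permutation. The classes have equal size $n!/2$, so Hamilton-laceability is the right property to aim for. We have to show that for any even $\pi$ and odd $\sigma$ there is a Hamilton path from $\pi$ to~$\sigma$. The base case $n=4$ involves only two tree shapes up to isomorphism, the path $P_4$ and the star $K_{1,3}$. Each gives a graph on $24$ vertices, and I would verify laceability for these two graphs directly, by computer or by an explicit symmetric case analysis using vertex-transitivity.

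For the induction step, choose a leaf $v$ of~$T$ with neighbor $w$, and partition $S_n$ into the $n$ classes $\cC_k=\{\pi : \pi(v)=k\}$, i.e.\ by the value sitting at position~$v$. Inside each class the permissible moves are the transpositions of the tree $T-v$ acting on the remaining $n-1$ positions. So each $\cC_k$ induces a copy of the Cayley graph for the tree $T-v$ on $n-1$ symbols, which is Hamilton-laceable by induction once $n-1\ge 4$. The edge $\{v,w\}$ is the only generator that leaves a class. It joins $\pi\in\cC_k$ to the permutation in $\cC_{\pi(w)}$ obtained by swapping the entries at positions $v$ and~$w$. Between any two classes $\cC_k$ and~$\cC_\ell$ there are $(n-2)!$ such cross edges, namely those from permutations with $\pi(v)=k$ and $\pi(w)=\ell$.

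Given even $\pi$ and odd $\sigma$, order the classes as $\cC_{k_1},\ldots,\cC_{k_n}$ with $k_1=\pi(v)$ and $k_n=\sigma(v)$; this is possible if $\pi(v)\neq\sigma(v)$. Then choose entry and exit vertices $A_i,B_i\in\cC_{k_i}$ as in the proof of Theorem~\ref{thm:CE2-hconn}, under two requirements. First, $B_i$ and $A_{i+1}$ must be joined by a cross edge. Second, $A_i$ and $B_i$ must lie in opposite parity classes, because each class is itself bipartite and we need to apply laceability inside it. Parity comes out right automatically: a cross edge flips parity, so if $A_i$ is even then $B_i$ must be odd and $A_{i+1}$ is again even, and the chain ends at an odd $B_n=\sigma$, as required. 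The choices are plentiful because there are $(n-2)!/2$ odd permutations in $\cC_{k_i}$ with a prescribed value at~$w$. The only delicate step is at the end: the $A_n$ produced by the chain must differ from~$\sigma$, and it must have parity opposite to~$\sigma$, which it does. To secure $A_n\neq\sigma$, pick $B_{n-1}$ among at least two candidates, so that their neighbours in $\cC_{k_n}$ are distinct and at least one of them is not~$\sigma$. Concatenating the Hamilton paths of the classes then gives the desired path.

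The main obstacle is the case $\pi(v)=\sigma(v)$, when both endpoints lie in the same class. Here I would either choose a different leaf, since if every leaf $v$ has $\pi(v)=\sigma(v)$ we can look for some other position, or argue as follows. Since $\pi\neq\sigma$, some position $u$ satisfies $\pi(u)\neq\sigma(u)$. If $T$ has a leaf $u$ with this property, use $u$ in place of~$v$. Otherwise $\pi$ and $\sigma$ agree on all leaves, and then I would apply a splitting trick inside the class $\cC_{k_1}$. Use laceability there to build a path from $\pi$ to an auxiliary vertex $x$ covering only part of the class, leave the class through~$x$, sweep through all the other classes, and re-enter at a vertex $y$ from which a path to $\sigma$ covers the rest of~$\cC_{k_1}$. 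The cleanest way to get this is to split $\cC_{k_1}$ once more, by the value at a second leaf of $T-v$, into subclasses, and to route through those subclasses in the same chained fashion, with the excursion through the other classes inserted between two consecutive subclasses. Making the parities match at the junctions, and handling small $n$ (where $T-v$ has only $3$ vertices, so $n=5$ needs the base case $n=4$), is where I expect most of the care to be needed.
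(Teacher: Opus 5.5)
\textbf{There is a gap in the case where $\pi$ and $\sigma$ lie in the same class.} The paper does not prove this statement; it only cites Tchuente. So your argument has to stand on its own. Most of the framework is sound. The leaf decomposition into the classes $\cC_k$ works, and so does chaining entry/exit vertices $A_i,B_i$ of opposite parity across the $(v\,w)$ cross edges. The observation that parity already forces $A_n\neq\sigma$ is also correct, so the two-candidate argument is unnecessary. Together these settle the case $\pi(v)\neq\sigma(v)$.

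You rightly flag the remaining case as the crux, but you do not resolve it. Your remedy is to split $\cC_{k_1}$ again by the entry at a leaf $u$ of $T-v$, and this only helps if $\pi(u)\neq\sigma(u)$. Every leaf of $T-v$ other than $w$ is a leaf of $T$, where by assumption $\pi$ and $\sigma$ agree. So you land in the same situation one level down, unless $w$ happens to become a leaf with $\pi(w)\neq\sigma(w)$. For example, take the path $1-2-\cdots-7$ with $\pi=1234567$ and $\sigma=1254367$. For either choice $v\in\{1,7\}$, the leaves of $T-v$ are $\{2,7\}$ or $\{1,6\}$, and $\pi$ and $\sigma$ agree there. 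No strengthened induction hypothesis is formulated that would make this regress terminate, and the parity bookkeeping for it is not checked. As written, the induction step does not close.

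\textbf{The missing idea needs only laceability of the class itself.} By induction, take a Hamilton path $P$ in $\cC_{k_1}$ from $\pi$ to $\sigma$. On $\cC_{k_1}$ the entry at position $w$ takes $n-1\geq 2$ different values, and only the transpositions $(w\,u)$ with $u\neq v$ change it. Hence $P$ contains an edge $\{x,y\}$ with $x(w)\neq y(w)$.

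Deleting this edge splits $P$ into a path from $\pi$ to $x$ and a path from $y$ to $\sigma$. Let $x'$ and $y'$ be the neighbours of $x$ and $y$ obtained by swapping the entries at positions $v$ and $w$. They lie in the distinct classes $\cC_{x(w)}$ and $\cC_{y(w)}$, both different from $\cC_{k_1}$. They have opposite parity, because $x$ and $y$ do.

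Your chaining argument from the first case then gives a Hamilton path through the remaining $n-1$ classes from $x'$ to $y'$. Concatenating the three pieces gives the required Hamilton path from $\pi$ to $\sigma$. With this, you never need to look for a different leaf.

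Separately, the base case $n=4$ is still a genuine finite check you have not carried out. It cannot be pushed lower, since for $n=3$ the graph is a $6$-cycle, which is not Hamilton-laceable.
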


The transposition tree is a graph with vertex set~$[n]$, and an edge~$\{i,j\}$ present in the graph indicates that the transposition $\pi(i)\leftrightarrow \pi(j)$ is valid.

\begin{figure}[b!]
\includegraphics{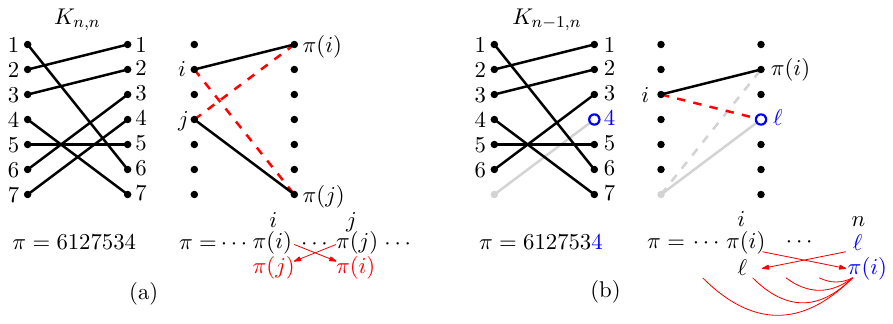}
\caption{Correspondence between (a) perfect matchings of~$K_{n,n}$ and permutations, and between 2-flips and transpositions; (b) almost-perfect matchings of~$K_{n-1,n}$ and permutations, and between 1-flips and star transpositions.}
\label{fig:bip}
\end{figure}

An immediate consequence of Theorem~\ref{thm:tchuente} and the bijection between perfect matchings of~$K_{n,n}$ and permutations on~$[n]$ is the following. 

\begin{theorem}
\label{thm:CEB-lace}
For any $n\geq 2$, the flip graph of perfect matchings of~$K_{n,n}$ under 2-flips is Hamilton-laceable.
\end{theorem}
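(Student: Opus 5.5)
The proof reduces the statement to Theorem~\ref{thm:tchuente} through the bijection between perfect matchings of~$K_{n,n}$ and permutations of~$[n]$ described above. Under this bijection, the flip graph of perfect matchings of~$K_{n,n}$ under 2-flips is isomorphic to the Cayley graph of the symmetric group generated by \emph{all} transpositions. A 2-flip exchanging the edges $(i,\pi(i))$ and $(j,\pi(j))$ for $(i,\pi(j))$ and $(j,\pi(i))$ is exactly the transposition of positions $i$ and~$j$, and every pair $i\neq j$ is allowed. I first check that this map is a graph isomorphism: the symmetric difference of two perfect matchings is a 4-cycle if and only if the corresponding permutations differ by a single transposition of two positions.

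For $n\geq 4$, I fix any spanning tree~$T$ of the complete graph on~$[n]$, say the star with center~1. By Theorem~\ref{thm:tchuente}, the graph on permutations generated by the transpositions of~$T$ is Hamilton-laceable. Its bipartition is into even and odd permutations, since every transposition changes the sign. This graph is a spanning subgraph of our flip graph, and the flip graph is bipartite with the same bipartition, because every 2-flip is a transposition. Hence any Hamilton path between two permutations of opposite parity in the subgraph is also a Hamilton path in the flip graph, which gives Hamilton-laceability for $n\geq 4$.

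The cases $n=2,3$ are not covered by Theorem~\ref{thm:tchuente} and must be checked directly; I expect this to be the only real obstacle, and a small one. For $n=2$, the flip graph is a single edge between the two permutations, which is trivially Hamilton-laceable. For $n=3$, the flip graph is $K_{3,3}$, since each of the 3 even permutations is adjacent to each of the 3 odd ones via the 3 transpositions. A Hamilton path between any even and any odd vertex of $K_{3,3}$ exists: alternate through the remaining vertices in any order, which is possible because the graph is complete bipartite with equal sides.
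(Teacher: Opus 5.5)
Your proposal is correct and follows the paper's proof: the bijection with permutations, a direct check of $n=2$ (a single edge) and $n=3$ ($K_{3,3}$), and Tchuente's theorem for $n\geq 4$. You also make explicit a routine point the paper leaves implicit: the graph restricted to a transposition tree is a spanning subgraph of the flip graph with the same parity bipartition, so its Hamilton paths are Hamilton paths of the flip graph.
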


\begin{proof}
For $n=2$ the flip graph is a single edge, and for $n=3$ the flip graph is $K_{3,3}$, both of which are Hamilton-laceable.
For $n\geq 4$ the statement follows from Theorem~\ref{thm:tchuente}.
\end{proof}

As in Tchuente's result, one may even restrict the allowed 2-flips further, so that they correspond to a tree of transpositions, and still obtain Hamilton-laceability.
Note however, that as any transposition changes the parity of a permutation, the start and end vertex of a Hamilton path may never have the same parity, i.e., we cannot hope to strengthen the theorem to Hamilton-connectedness.

\subsection{Almost-perfect matchings in complete bipartite graphs}

We observe that the set of perfect matchings of~$K_{n,n}$ is in one-to-one correspondence with the set of almost-perfect matchings of~$K_{n-1,n}$; see Figure~\ref{fig:bip}~(b).
Indeed, the bijection is given by removing one vertex from the first partition class, the last one, say.
We can encode an almost-perfect matching of $K_{n-1,n}$ by a permutation of length~$n$, where we first write the matching partners of the vertices in the first partition class, and the unmatched vertex last.
Note that a 1-flip in the almost-perfect matching corresponds to a \defi{star transposition} in the permutation, i.e., a transposition of the last entry with some earlier entry.
Specifically, the unmatched vertex (listed last) becomes matched and instead a matched vertex becomes unmatched.
Thus, applying Theorem~\ref{thm:tchuente} with a star graph centered at the last index as transposition tree gives the following result.

\begin{theorem}
\label{thm:COB-lace}
For any $n\geq 3$, the flip graph of almost-perfect matchings of $K_{n,n+1}$ under 1-flips is Hamilton-laceable.
\end{theorem}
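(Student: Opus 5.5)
The plan is to transfer Theorem~\ref{thm:tchuente} through the bijection just described, treating small $n$ separately. Fix the first partition class $\{a_1,\ldots,a_n\}$ and the second class $\{1,\ldots,n+1\}$ of $K_{n,n+1}$. An almost-perfect matching (with $n$ edges) is encoded by the permutation $\pi=(\pi(1),\ldots,\pi(n+1))$ of $[n+1]$, where $\pi(k)$ is the partner of $a_k$ for $k\le n$ and $\pi(n+1)$ is the unique unmatched vertex of the second class. This is a bijection onto all $(n+1)!$ permutations.

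First I check that 1-flips correspond exactly to star transpositions $\pi(k)\leftrightarrow\pi(n+1)$ with $k\in[n]$. A 1-flip makes the unmatched vertex $j$ matched and some matched vertex unmatched. If the vertex becoming unmatched lay in the first class, that class would have an unmatched vertex. This is impossible, since all $n$ vertices of the first class are covered. So the new unmatched vertex is some $\pi(k)$ in the second class, and the edge $\{a_k,\pi(k)\}$ is replaced by $\{a_k,j\}$, which is precisely swapping positions $k$ and $n+1$. Conversely, every star transposition arises this way. Hence the flip graph is isomorphic to the Cayley graph of $S_{n+1}$ generated by the transpositions of the star $K_{1,n}$ centred at $n+1$.

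This star is a transposition tree on $[n+1]$. Theorem~\ref{thm:tchuente} with $n+1\ge4$, that is $n\ge3$, then gives Hamilton-laceability directly. Bipartiteness, with classes the even and odd permutations, is automatic since each transposition changes parity.

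There is no real obstacle here. The only care needed is the index shift: Theorem~\ref{thm:tchuente} is applied to permutations of length $n+1$, which is why the hypothesis is $n\ge3$ rather than $n\ge4$. The other point to state explicitly is why the unmatched vertex must lie in the larger class, which is what makes every 1-flip a star transposition and not some other transposition.
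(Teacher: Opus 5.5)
Your proposal is correct and follows essentially the same route as the paper: encode each almost-perfect matching of $K_{n,n+1}$ as a permutation of $[n+1]$ with the unmatched vertex in the last position, observe that 1-flips are exactly the star transpositions with the last position, and apply Theorem~\ref{thm:tchuente} to the star centred at $n+1$, which needs $n+1\geq 4$. Your explicit handling of this index shift, and your argument that the vertex becoming unmatched must lie in the larger class, are details the paper leaves implicit.
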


For $n=2$ the flip graph is a 6-cycle, so Hamilton-laceability does not hold.

In analogy to Theorem~\ref{thm:COd-hconn}, we extend Theorem~\ref{thm:COB-lace} as follows.

\begin{theorem}
\label{thm:COB-conn}
For any $n\geq 1$ and $d\geq 2$, the flip graph of $n$-edge matchings of~$K_{n,n+d}$ under 1-flips is Hamilton-connected.
\end{theorem}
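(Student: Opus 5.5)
We follow the proof of Theorem~\ref{thm:COd-hconn}: induction on~$n$ and~$d$, partitioning by one vertex on the small side. Let the classes be $U=\{u_1,\ldots,u_n\}$ and $W=\{w_1,\ldots,w_{n+d}\}$. Every $n$-edge matching saturates~$U$ and leaves exactly $d$ vertices of~$W$ unmatched. A 1-flip replaces an edge $\{u,w\}$ by $\{u,w'\}$, where $w'$ is unmatched. We include the trivial case $n=0$ (a single vertex) as an additional induction base. For $d\geq 2$ the flip graph is not bipartite: for $n=1$ it is $K_{1+d}$, which contains triangles. So Hamilton-connectedness is the right target.

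\textbf{Setup.} Given distinct matchings $M,M'$, choose $u\coloneq u_1$ with different partners in $M$ and~$M'$, say $w_a$ and~$w_b$. Such a $u$ exists because both matchings saturate~$U$ and are distinct. For $j\in[n+d]$, let $\cM_j$ be the set of matchings containing the edge $\{u_1,w_j\}$. Deleting $u_1$ and $w_j$ shows that $\cM_j$ is isomorphic to the flip graph of $(n-1)$-edge matchings of $K_{n-1,n-1+d}$, with the same~$d$. By induction each $\cM_j$ is therefore Hamilton-connected, or a single vertex when $n=1$.

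\textbf{Chaining the classes.} Two classes $\cM_j$ and $\cM_k$ are joined by a flip from a matching $B\in\cM_j$ in which $w_k$ is unmatched; the flip moves $u_1$ from $w_j$ to~$w_k$. Any ordering of the classes is usable, so order them $\cN_1=\cM_a,\ldots,\cN_{n+d}=\cM_b$. Then choose $A_i,B_i\in\cN_i$ with $A_1=M$, $B_{n+d}=M'$, $A_i\neq B_i$, and $A_{i+1}$ obtained from $B_i$ by one flip. Concatenating the Hamilton paths from $A_i$ to $B_i$ inside each~$\cN_i$ gives the desired Hamilton path.

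\textbf{Case $n=1$.} Each class has size~1 and the graph is $K_{1+d}$, so the claim is immediate.

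\textbf{Case $n\geq 2$, counting valid choices of $B_i$.} In $\cN_i=\cM_j$, the valid $B_i$ leading to $\cM_k$ are the matchings of the sub-instance in which $w_k$ is unmatched. Their number is the number of $(n-1)$-edge matchings of $K_{n-1,n+d-2}$, which equals $(n+d-2)!/(d-1)!$. For $n\geq 2$ and $d\geq 2$ this is at least $d\geq 2$. It is at least~$3$ unless $n=2$ and $d=2$, in which case it equals~$2$.

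\textbf{Main obstacle.} We must ensure $B_i\neq A_i$ and that $A_{n+d}\neq M'$. Since $A_i$ may itself be a valid choice of~$B_i$, two valid choices might leave only one usable option. At step $n+d-1$ we additionally need the resulting $A_{n+d}$ to avoid~$M'$. Distinct choices of $B_{n+d-1}$ yield distinct $A_{n+d}$, because the flip map is injective. Hence we need at least two valid $B_{n+d-1}\neq A_{n+d-1}$, which requires at least three candidates.

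For $n=2$, $d=2$, we would resolve this with a little slack. First choose $A_{n+d-1}$ not to be a valid choice for $B_{n+d-1}$: the candidate set is a proper subset of $\cN_{n+d-1}$, and $A_{n+d-1}$ is determined by $B_{n+d-2}$, over which we have a choice. Alternatively, we could simply verify this small case, $K_{2,4}$ with $12$ matchings, by computer or by hand.

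Finally, when $n-1=0$ each class is a single vertex, so $A_i=B_i$ is forced. This is consistent with the convention used in the proof of Theorem~\ref{thm:COd-hconn}.
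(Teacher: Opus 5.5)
Your proposal is correct and is essentially the paper's proof: the same partition by the partner of one vertex of the small side, the same chaining of the classes with $A_i\neq B_i$, the same count showing at least three transition matchings unless $n=d=2$, and the exceptional case $K_{2,4}$ settled by direct verification (the paper does this with a figure). One small caveat: your ``slack'' alternative for $K_{2,4}$ does not work for an arbitrary fixed ordering of the classes, because the choice of $B_{n+d-2}$ can be forced by the requirement $B_{n+d-2}\neq A_{n+d-2}$ (e.g.\ when the partner of $u_2$ in $M$ is the vertex indexing the third class $\cN_3$); it does go through once you also use your freedom to order the two middle classes.
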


We can think of such a perfect matching as pair~$(\pi,D)$, where $\pi=(\pi(1),\ldots,\pi(n))$ are the matching partners of the vertices in the first partition class, i.e., $\pi(i)\in[n+d]$ for all $i\in[n]$, and $D=[n+d]\setminus\{\pi(1),\ldots,\pi(n)\}$ is the set of the unmatched vertices in the second partition class.
Each 1-flip exchanges one entry from the sequence~$\pi$ with an element from the set~$D$.
This can be thought of as a star transposition where multiple symbols are set apart (in $D$), generalizing the aforementioned case $d=1$ (in which case $D$ is a singleton and so $(\pi,D)$ can be viewed as a permutation of~$[n+1]$).

\begin{proof}
For $n=1$ and and~$d\geq 2$ the flip graph is complete, so the statement is trivially true.

\begin{figure}[t!]
\includegraphics{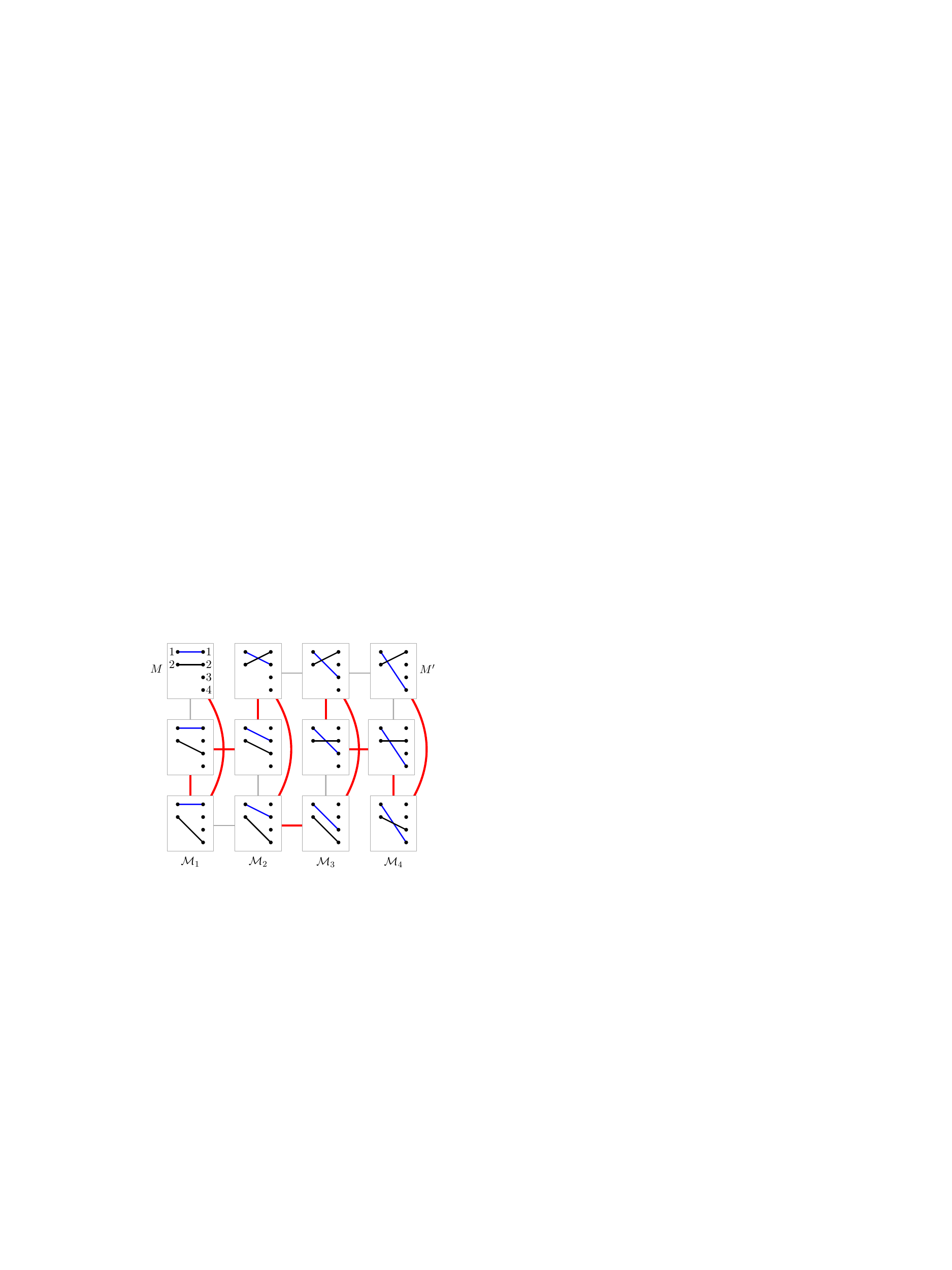}
\caption{Illustration of the proof of Theorem~\ref{thm:COB-conn}.
Edges between sets~$\cM_i$ and~$\cM_j$ for $j>i+1$ are not shown.}
\label{fig:bip-hconn}
\end{figure}

For $n\geq 2$, let $M$ and~$M'$ be two distinct $n$-edge matchings of~$K_{n,n+d}$.
We assume w.l.o.g.\ that $(1,1),(2,2)\in M$ and $(1,n+d)\in M'$.
We partition the set of all $n$-edge matchings of~$K_{n,n+d}$ according to the edge incident with vertex~1 in the first partition class.
Specifically, for $i=1,\ldots,n+d$, we let $\cM_i$ be the set of matchings that contain the fixed edge~$(1,i)$.
For $n=2$ and $d=2$ this partition is shown in Figure~\ref{fig:bip-hconn}, including a Hamilton path from~$M$ to one of three possible matchings~$M'\in\cM_4$.
Solutions for the other two cases can be obtained by straightforward modifications.
For the rest of the proof we assume that $n>2$ or $d>2$.
We choose pairs of matchings $A_i,B_i\in\cM_i$ for $i=1,\ldots,n+d$ satisfying the following conditions:
\begin{itemize}[topsep=1mm,leftmargin=4mm]
\item $A_1=M$ and $B_{n+d}=M'$;
\item $A_i\neq B_i$ for $i=1,\ldots,n+d$;
\item $A_{i+1}$ differs from $B_i$ by a 1-flip for all $i=1,\ldots,n+d-1$.
\end{itemize}
By the assumption $n>2$ or $d>2$, each set~$\cM_i$ contains at least three matchings for which a 1-flip leads to a matching in~$\cM_{i+1}$.
Specifically these matchings in~$\cM_i$ have the vertex~$i+1$ unmatched.
Thus, when $A_{n+d-1}$ is chosen, there are at least two choices for~$B_{n+d-1}$, and not both of their neighbors in~$\cM_{n+d}$ can be equal to the prescribed last matching $B_{n+d}=M'$, so one of the neighbors can become~$A_{n+d}$.

By induction, for $i=1,\ldots,n+d$ there is a path~$P_i$ in the flip graph from~$A_i$ to~$B_i$ through all matchings in~$\cM_i$.
The concatenation~$P_1,P_2,\ldots,P_{n+d}$ is the desired Hamilton path in the entire flip graph.
\end{proof}

\section{Directed matchings in complete graphs}
\label{sec:directed}

\subsection{Directed perfect matchings}

In this section, we consider \emph{directed} perfect matchings of~$K_n$, where $n=2m$ for some integer~$m\geq 1$, i.e., perfect matchings in which every edge has one of two possible orientations.
The total number of such matchings is $2^m(n-1)!!$ (OEIS A001813).

Flips between matchings occur by taking the symmetric difference with a \emph{directed} 4-cycle, of which there are five different types; see Figure~\ref{fig:dir-2flips}.

\begin{figure}[h!]
\includegraphics[page=1]{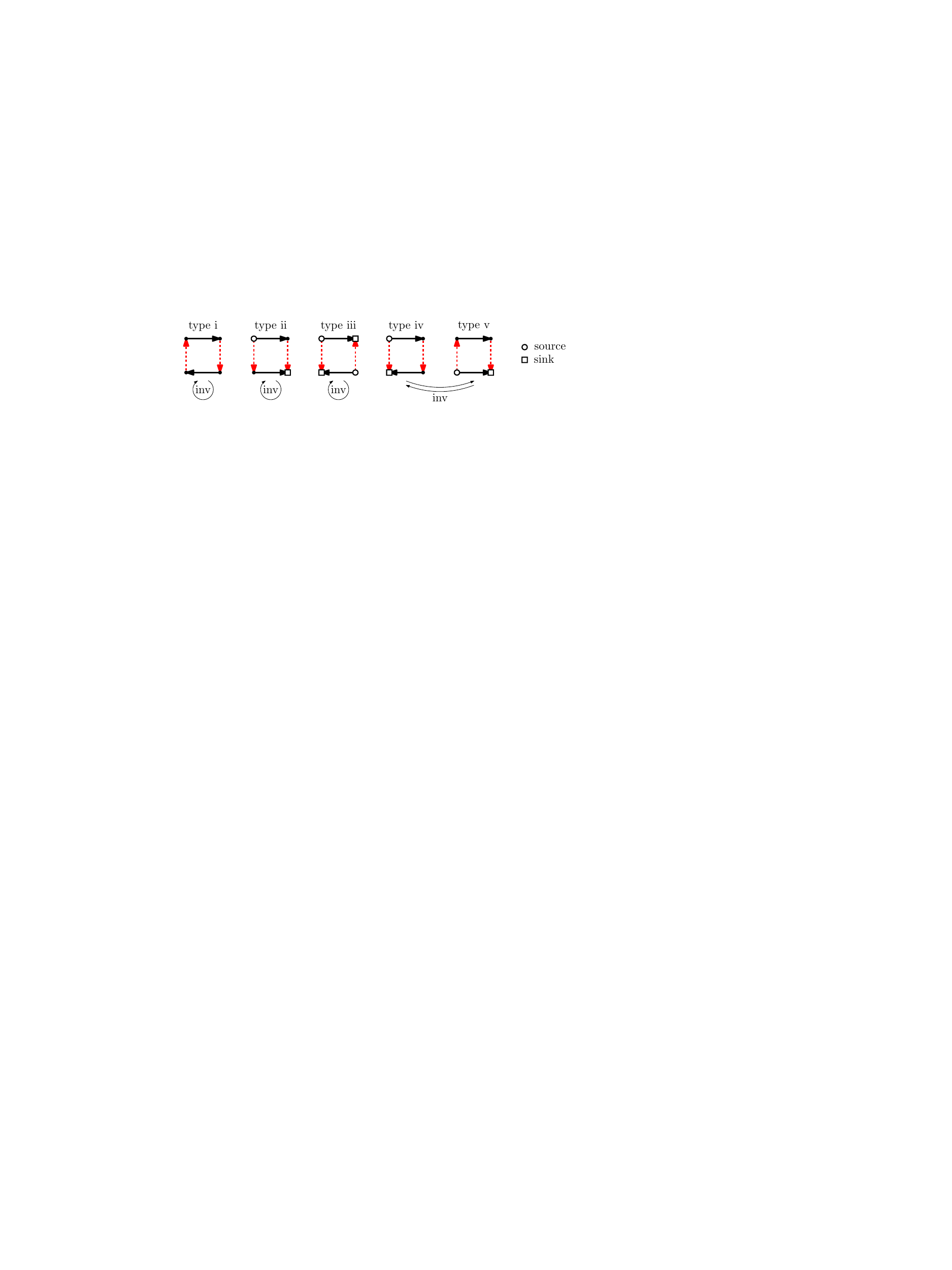}
\caption{The five different types of directed 2-flips.
Solid edges are in the original matching, and dashed edges are in the target matching.
Sources and sinks along the 4-cycles are highlighted, as they distinguish the different types.
The bottom of the figure indicates which flip operations are inverse to each other.}
\label{fig:dir-2flips}
\end{figure}

\begin{figure}[t!]
\includegraphics[page=1]{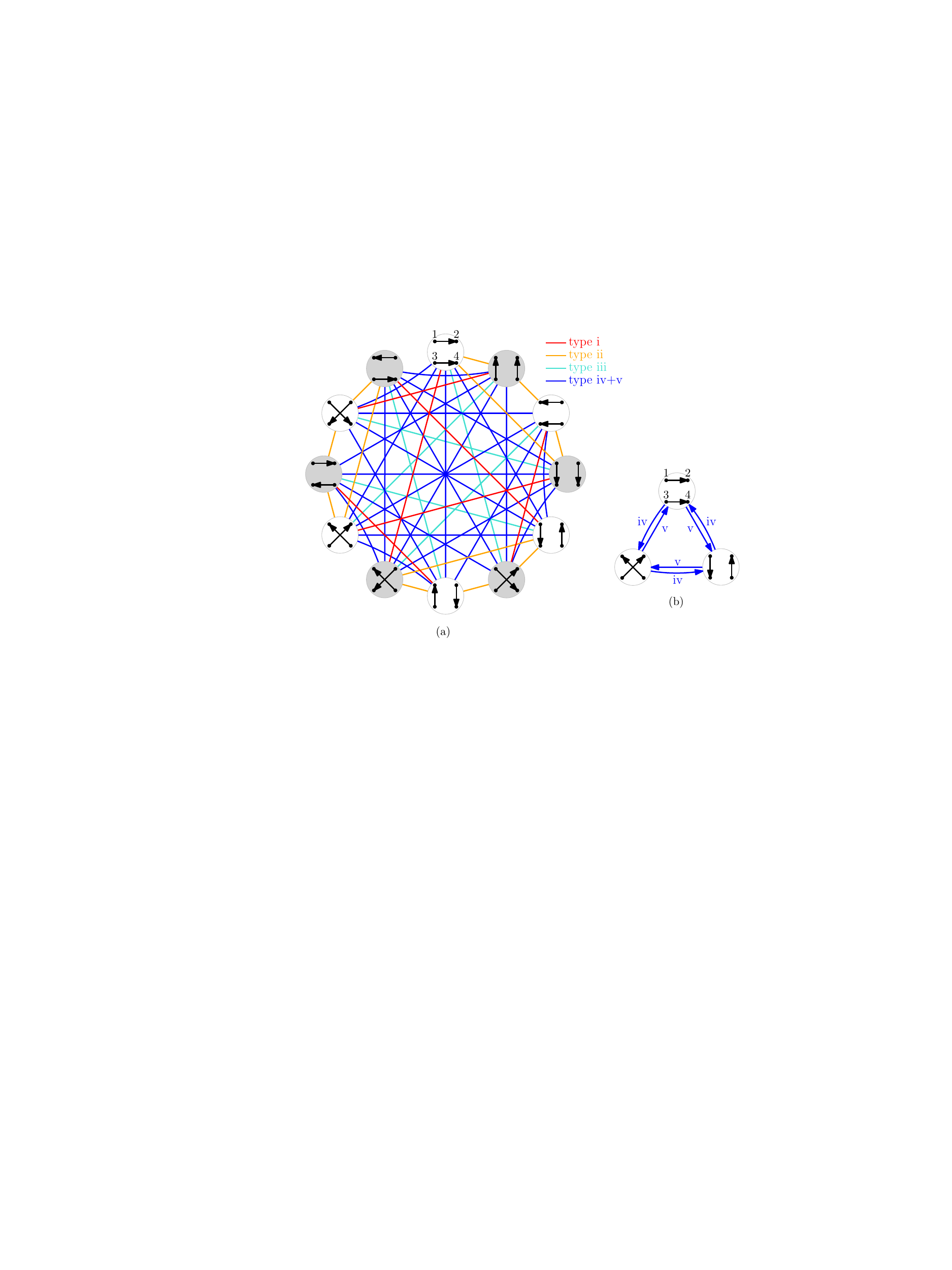}
\caption{(a) Flip graph of directed perfect matchings of~$K_4$, with edges colored according to the different flip types.
For type~iv and~v, the same color is used, and each of these two types corresponds to one of the two orientations of the edge.
Matchings with even and odd parity are non-shaded and shaded, respectively.
(b)~Subgraph of~(a) with type-iv and type-v flips distinguished, showing that two type-iv flips simulate one type-v flip, and vice versa.}
\label{fig:G4d}
\end{figure}

Note that flips of type~i, ii and iii are self-inverse, whereas type~iv is the inverse of type~v and vice versa.
In the former cases, the flip graph is undirected, whereas in the latter cases the flip graph is directed.
This subtlety is irrelevant when it comes to connectivity, because one type-iv flip can be simulated by two type-v flips, and vice versa; see Figure~\ref{fig:G4d}~(b).
Furthermore, in our Hamiltonicity results (Theorem~\ref{thm:CED-hconn}), we consider both flip types~iv and~v together.

Observe that for each flip type (and combinations of them), the corresponding flip graph is always vertex-transitive.
This is because, similarly to the undirected case, any two directed perfect matchings of~$K_n$ are isomorphic under a suitable permutation of the vertices.

\subsubsection{Parity of permutations and directed matchings}

For a permutation~$\pi=(\pi_1,\ldots,\pi_n)$ of~$[n]=[2m]$, we let $M_\pi$ denote the directed perfect matching with the edges $(\pi_1,\pi_2),(\pi_3,\pi_4),\ldots,(\pi_{n-1},\pi_n)$.
Note that exactly $m!$ permutations of~$[n]$ correspond to the same matching~$M_\pi$, namely the ones that differ only in the ordering of the pairs of consecutive entries~$(\pi_{2i-1},\pi_{2i})$, $i\in[m]$.
The parity of all these permutations is the same, because swapping two pairs of consecutive entries $(\pi_{2i-1},\pi_{2i})\leftrightarrow (\pi_{2j-1},\pi_{2j})$ can be seen as a sequence of two transpositions $\pi_{2i-1}\leftrightarrow \pi_{2j-1}$ and $\pi_{2i}\leftrightarrow \pi_{2j}$.
In this way, we can assign a \defi{parity} to each matching~$M$, as the parity of one of the permutations~$\pi$ such that $M=M_\pi$; see Figure~\ref{fig:G4d}~(a).

\subsubsection{Basic properties}

\begin{theorem}
\label{thm:CED-basic}
Let $m\geq 2$ and~$n\coloneq 2m$.
The flip graph of directed perfect matchings of~$K_n$ under 2-flips of exactly one of the five types has the following properties:
\begin{itemize}[topsep=1mm,leftmargin=4mm]
\item type~i: bipartite and connected for $m\geq 4$;
\item type~ii: bipartite and connected for $m\geq 3$;
\item type~iii: bipartite and has exactly $\binom{n}{m}=\binom{2m}{m}$ isomorphic connected components;
\item type~iv or v: non-bipartite and has exactly two isomorphic connected components.
\end{itemize}
\end{theorem}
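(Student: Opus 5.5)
The plan is to attach to each type of 2-flip an invariant or semi-invariant via the parity of directed matchings defined above, and then prove connectivity (or count components) by induction on~$m$. A 2-flip touches two edges $(\pi_{2i-1},\pi_{2i})$ and $(\pi_{2j-1},\pi_{2j})$. After reordering the pairs we may assume they are the first two pairs of~$\pi$. The flip then replaces $(\pi_1,\pi_2,\pi_3,\pi_4)$ by a fixed rearrangement of these four entries, determined by the type. So the parity change caused by a flip of a given type is the sign of that rearrangement in~$S_4$, and it does not depend on the matching.

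\emph{Step~1: parity and bipartiteness.} I will read off from Figure~\ref{fig:dir-2flips} the rearrangement for each type and compute its sign.
\begin{itemize}[topsep=1mm,leftmargin=4mm]
\item I expect types~i, ii and~iii to be odd rearrangements. Then every flip changes parity, so the parity classes give a bipartition.
\item I expect types~iv and~v to be even rearrangements, which would make parity invariant.
\end{itemize}

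\emph{Step~2: types~iv/v.} If the sign is even, the flip graph has at least two components, namely the two parity classes. Non-bipartiteness should follow from exhibiting an odd closed walk. In $K_4$ the underlying undirected graph of the type-iv/v flips should contain a triangle; I would check this in Figure~\ref{fig:G4d}. This odd closed walk lifts to any $m$ by fixing the remaining $m-2$ edges.

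The two classes are isomorphic: relabelling the vertices by a transposition of~$[n]$ preserves flip types and swaps the parities. It then remains to show that each parity class is connected, which I do by induction on~$m$ with base case $m=2$ (inspect Figure~\ref{fig:G4d}).

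\emph{Step~3: type~iii.} Here I expect every vertex of the 4-cycle to be a source or a sink. In that case a flip replacing $a\to b$, $c\to d$ must produce $a\to d$, $c\to b$, so the set of tail vertices is invariant. This yields at least $\binom{2m}{m}$ classes, all isomorphic via vertex relabellings. Within a class with fixed tail set~$T$ and head set~$H$, a matching is a bijection $T\to H$, and the flips are exactly the transpositions. This is the flip graph of $K_{m,m}$ under 2-flips, which is connected, for instance by Theorem~\ref{thm:CEB-lace} or directly because transpositions generate~$S_m$. It is also bipartite by parity.

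\emph{Step~4: types~i and~ii, connectivity.} I will argue by induction on~$m$ as in Theorem~\ref{thm:CE2-hconn}. Partition the matchings by the directed edge at vertex~$1$. Each part contains flip graphs of $K_{n-2}$ of the same type, and these are connected by induction. It then suffices to show that any two parts are joined by at least one flip, or by a short chain of flips through other parts. The key local task is to show that the chosen type can both move the partner of vertex~$1$ and reverse the orientation of an edge. Reversal can be achieved by a composition of flips using an auxiliary third edge, which is where $m\geq 3$ (or $m\geq 4$ for type~i) enters.

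I expect this to be the main obstacle, especially establishing the thresholds. I would first try to find explicit short flip sequences on $3$ or $4$ edges that realise an orientation reversal and a partner change. The base cases $m=3$ (type~ii) and $m=4$ (type~i) I would check by computer. One also has to confirm that connectivity fails below these thresholds, for consistency with the statement.
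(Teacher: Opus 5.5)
Your proposal is correct. On several points it matches the paper:
\begin{itemize}
\item bipartiteness via the parity of the associated permutation;
\item type~iii via invariance of the tail set, reducing to transpositions on $K_{m,m}$;
\item non-bipartiteness of types~iv/v via a triangle in $K_4$, lifted by fixing the other edges.
\end{itemize}
Your isomorphism of the two type-iv components via relabelling by a transposition is a more explicit version of the paper's appeal to vertex-transitivity. One small imprecision: the rearrangement is not unique, since types~ii, iv and~v each admit two flips on the same pair of edges. However, its sign depends only on the type, which is all you use.

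The routes differ for the connectivity claims. For types~i and~ii the paper does not induct. It exhibits seven type-i flips on four edges and five type-ii flips on three edges that reverse a single edge, and combines these with connectivity of undirected matchings under 2-flips. That argument is computer-free, explains the thresholds $m\geq 4$ and $m\geq 3$ by the size of these gadgets, and also yields the linear diameter bound in the subsequent corollary.

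Your induction works too, and it needs no edge reversal. A type-ii flip on $1\to j$, $c\to d$ yields either $1\to c$, $j\to d$ or $c\to 1$, $d\to j$. The unique type-i flip yields $j\to c$, $d\to 1$. So the quotient graph on the parts $\cM_j^{\pm}$ is connected by single flips. For type~i only transitions $\cM_j^+\leftrightarrow\cM_d^-$ occur, so the quotient is $K_{n-1,n-1}$ minus a perfect matching, which is connected. The price is that the base cases $m=3$ (120 vertices) and $m=4$ (1680 vertices) must be checked by computer. The induction cannot start lower, because the parts are then disconnected.

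For types~iv/v, the paper shows that one flip realises a cyclic shift of three consecutive entries of~$\pi$, and then uses that 3-cycles generate the alternating group. Your induction, which you leave unelaborated, also goes through. Each part intersected with a parity class is exactly one parity class of the smaller flip graph, and the quotient on parts is again connected. For $m\geq 3$, a spare edge can be reoriented so that every link between parts is realised inside a prescribed parity class.

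Finally, you do not need to confirm disconnectedness below the thresholds; the statement makes no claim there.
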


For $m=2,3$ the flip graph under type-i flips is disconnected, namely, it has six connected components.
For $m=2$ those can be seen in Figure~\ref{fig:G4d}~(a).
For $m=2$ the flip graph under type-ii flips is disconnected, namely, it has three connected components; see Figure~\ref{fig:G4d}~(a).

\begin{proof}
The proof is split into cases according to the different types of flips.

{\bf Type-i flips:}
To argue that the flip graph is bipartite, we consider a permutation~$\pi$ associated to a matching.
A type-i flip in the matching changes two pairs of entries $(\pi_{2i-1},\pi_{2i})$ and~$(\pi_{2j-1},\pi_{2j})$ to $(\pi_{2j},\pi_{2i-1})$ and $(\pi_{2i},\pi_{2j-1})$, which can be seen as a sequence of three transpositions $\pi_{2i-1}\leftrightarrow \pi_{2j}$ followed by $\pi_{2i}\leftrightarrow \pi_{2i-1}$ followed by $\pi_{2j-1}\leftrightarrow \pi_{2i}$.
Consequently, each flip toggles the parity of the matching, so the partition classes of the flip graph are given by the parity of the matchings.

To prove connectivity, Figure~\ref{fig:d2flips-typei} shows a sequence of seven type-i flips on a set of 4~edges that reverses the orientation of exactly one of the edges, and does not change any other edges.

\begin{figure}[h!]
\includegraphics[page=2,scale=0.8]{d2flips}
\caption{Sequence of type-i flips to reverse a single edge.}
\label{fig:d2flips-typei}
\end{figure}
This is enough to argue that any directed perfect matching can be reached from any other via type-i flips.

{\bf Type-ii flips:}
Similarly to type-i flips, each type-ii flip toggles the parity of a permutation~$\pi$ associated to the matching, implying that the flip graph is bipartite, with the partition classes given by the parity of the matchings.
Specifically, a type-ii flip in the matching changes two pairs of entries $(\pi_{2i-1},\pi_{2i})$ and~$(\pi_{2j-1},\pi_{2j})$ to $(\pi_{2i},\pi_{2j-1})$ and $(\pi_{2i},\pi_{2j})$, which can be seen as a single transposition $\pi_{2i}\leftrightarrow \pi_{2j-1}$.

To prove connectivity, Figure~\ref{fig:d2flips-typeii} shows a sequence of five type-ii flips on a set of 3~edges that reverses the orientation of exactly one of the edges, and does not change any other edges.
\begin{figure}[h!]
\includegraphics[page=3,scale=0.8]{d2flips}
\caption{Sequence of type-ii flips to reverse a single edge.}
\label{fig:d2flips-typeii}
\end{figure}
This is enough to argue that any directed perfect matching can be reached from any other via type-ii flips.

{\bf Type-iii flips:}
Note that under this type of flip, every vertex retains the property of being incident with an out-edge or in-edge of the matching, respectively.
Thus, the entire flip graph has $\binom{n}{m}$ connected components, one for each choice of which $m$ vertices out of the $n$ vertices have out-edges, and each of these components is isomorphic to the flip graph for perfect matchings of~$K_{m,m}$ discussed in Theorem~\ref{thm:CEB-lace}.

{\bf Type-iv or type-v flips:}
As they are inverse to each other, the two corresponding flip graphs differ only in the orientation of their edges.
Thus, for the rest of the proof it is sufficient to consider only type-iv flips.

As can be see from Figure~\ref{fig:G4d}, the flip graph has triangles, so it is not bipartite.

To argue that the flip graph has at least two connected components, consider a permutation~$\pi$ associated to a matching.
A type-iv flip in the matching changes two pairs of entries $(\pi_{2i-1},\pi_{2i})$ and~$(\pi_{2j-1},\pi_{2j})$ to $(\pi_{2i-1},\pi_{2j})$ and $(\pi_{2i},\pi_{2j-1})$, which can be seen as a sequence of two transpositions $\pi_{2i}\leftrightarrow \pi_{2j}$ followed by $\pi_{2j-1}\leftrightarrow \pi_{2j}$.
Consequently, applying a flip preserves the parity of the matching, and never toggles it.
Thus the flip graph has at least two connected components, induced by all matchings with even or odd parity.

It remains to show that each of these two subgraphs is connected.
For this consider two permutations~$\pi$ and~$\sigma$ that differ in cyclically shifting three consecutive elements, w.l.o.g.\ $\pi=12345\cdots n=\id$ and $\sigma=23145\cdots n$ (in particular, both permutations have even parity).
We can apply a single type-iv flip to $M_\pi$ to exchange the edges $(1,2)$ and $(3,4)$ for the edges $(1,4)$ and $(2,3)$, and we note that this yields a matching~$M_\sigma$.
It is well-known that shifting 3-cycles generate the alternating group, and thus any two matchings with the same parity are connected to each other.

The two components of the flip graph are isomorphic, as the entire flip graph is vertex-transitive.
\end{proof}

\begin{corollary}
The diameter of the flip graphs of directed perfect matchings of~$K_n$, where $n=2m$, under 2-flips of either type~i or type~ii (or both) is linear in~$m$.
\end{corollary}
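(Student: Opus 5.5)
We need to show the diameter is $O(m)$ (an upper bound, linear) and presumably $\Omega(m)$ as well. Since type-i and type-ii flips are each restricted cases of the union, it suffices to prove the upper bound separately for type~i alone and for type~ii alone, and the lower bound for the union of both types.

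\emph{Lower bound.} Every 2-flip changes exactly two edges of the matching, regardless of type. Start from a directed perfect matching $M$ and pick a target $M'$ sharing no edge with $M$, even as undirected edges. Then each flip can reduce the number of edges not yet in their final position by at most two. Hence at least $m/2$ flips are needed, and the diameter is $\Omega(m)$ for type~i, type~ii, and both together.

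\emph{Upper bound.} We follow the connectivity argument in the proof of Theorem~\ref{thm:CED-basic} and make it quantitative, in two phases.

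\textbf{Phase 1 (fix the underlying undirected matching).} We show that $O(m)$ flips of the given type transform the underlying undirected matching of $M$ into that of $M'$, ignoring orientations. Decompose the symmetric difference of the undirected versions into alternating cycles. An alternating cycle of length $2k$ can be resolved by $k-1$ undirected 2-flips, each of which makes one edge of $M'$ correct while shortening the cycle. Every undirected 2-flip $\{a,b\},\{c,d\}\to\{a,c\},\{b,d\}$ is realised by some type-ii flip, after possibly first reorienting the current edges. The key point is that for any two directed edges and any desired undirected pairing, some flip of the given type yields that pairing up to orientations. I would verify this by inspecting Figure~\ref{fig:dir-2flips}: for type~ii the pattern $(\pi_{2i-1},\pi_{2i}),(\pi_{2j-1},\pi_{2j})\to$ transposing $\pi_{2i}\leftrightarrow\pi_{2j-1}$ produces a specific re-pairing, and choosing which edge plays the role of $i$ gives both possible re-pairings. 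For type~i the three-transposition description similarly gives both re-pairings by swapping the roles of $i$ and $j$. So Phase~1 uses at most $m$ flips, since orientations produced along the way are irrelevant here.

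\textbf{Phase 2 (fix orientations).} Once the undirected matchings agree, at most $m$ edges have the wrong orientation. Each can be reversed by the constant-length gadgets: 7 type-i flips on 4 edges (Figure~\ref{fig:d2flips-typei}, needing $m\ge4$) or 5 type-ii flips on 3 edges (Figure~\ref{fig:d2flips-typeii}, needing $m\ge3$). These gadgets change nothing else, so Phase~2 costs at most $7m$ flips, and the total is $O(m)$. For small $m$, where the graphs are disconnected, the statement is vacuous or asymptotic anyway.

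\emph{Main obstacle.} The delicate step is Phase~1: checking that each flip type really realises both undirected re-pairings of any two edges in any orientation state. If only one re-pairing is realisable from a given orientation, I would first spend $O(1)$ gadget flips to reorient one of the two edges and then flip, which still costs $O(1)$ per step and keeps the bound linear.
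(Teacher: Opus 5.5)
Your proposal is correct and follows essentially the paper's argument. The lower bound comes from edge-disjoint matchings, and the upper bound from the constant-length single-edge reversals of Figures~\ref{fig:d2flips-typei} and~\ref{fig:d2flips-typeii} combined with one flip per newly created edge, which is the paper's ``at most two orientation changes plus one additional flip''.

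One correction: your claim that swapping the roles of $i$ and $j$ gives both undirected re-pairings is false. A type-i flip always joins each tail to the other edge's head, and a type-ii flip always joins tail to tail and head to head, so for a single flip type your fallback of first reversing one edge is genuinely needed. Phase~1 therefore costs $O(m)$ rather than at most $m$ flips, which still gives the linear bound.
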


\begin{proof}
Changing the orientation of one edge needs only a constant number of flips (recall Figures~\ref{fig:d2flips-typei} and~\ref{fig:d2flips-typeii}).
Adding or removing an edge requires at most two orientation changes plus one additional flip, i.e., again a constant number of flips.
This yields a linear upper bound on the diameter of the flip graph.

A linear lower bound is also easy to see, by considering two edge-disjoint matchings.
\end{proof}

\subsubsection{Hamiltonicity properties}

\begin{theorem}
\label{thm:CED-typei-lace}
Let $m\geq 4$ and~$n\coloneq 2m$.
The flip graph of directed perfect matchings of~$K_n$ under 2-flips of type~i is Hamilton-laceable.
\end{theorem}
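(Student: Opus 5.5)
We argue by induction on~$m$, in the same spirit as the proof of Theorem~\ref{thm:CE2-hconn}, with the base case $m=4$ verified by computer. By Theorem~\ref{thm:CED-basic}, the flip graph under type-i flips is bipartite, and its partition classes are given by the parity of the matchings. Vertex-transitivity, together with a parity-reversing symmetry (for instance, the transposition of two vertices), shows that the two classes have equal size. So let $M$ and~$M'$ be directed perfect matchings of opposite parity. We must construct a Hamilton path from~$M$ to~$M'$.

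For the induction step, let $m\geq 5$. We fix a vertex, label it~$1$, and partition the matchings into the $2(n-1)$ sets $\cM_{i}^{+}$ and~$\cM_i^{-}$, $i\in[2,n]$. These consist of the matchings containing the directed edge $(1,i)$ or $(i,1)$, respectively. Each set induces a copy of the type-i flip graph on $K_{n-2}$, so by induction it is Hamilton-laceable. The two partition classes of each copy are the restrictions of the global parity classes; deleting the fixed edge changes the parity only by a constant that depends on the set. We will choose vertex~$1$ so that $M$ and~$M'$ lie in different sets. This is possible unless $M$ and~$M'$ agree on every edge, which is excluded since $M\neq M'$. We will also choose the labels so that $M$ lies in the first set of our ordering and~$M'$ in the last.

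Next we order the $2(n-1)$ sets as $\cN_1,\ldots,\cN_{2n-2}$ so that consecutive sets are joined by type-i flips. A type-i flip that involves the edge at vertex~$1$ moves from $(1,i)$ to an edge $(j,1)$ or similar, according to the source/sink pattern of type~i. We read off from Figure~\ref{fig:dir-2flips} which pairs of sets $\cM_i^{\pm},\cM_j^{\pm}$ are adjacent, and we choose the ordering as a Hamilton path in this small ``quotient graph'' between the sets containing $M$ and~$M'$. I expect the quotient graph to be rich enough (it is complete-like on the indices $i$, alternating between the signs $+$ and~$-$) that such an ordering exists.

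We then pick entry and exit matchings $A_k,B_k\in\cN_k$ greedily. Here $A_1=M$ and $B_{2n-2}=M'$, each $B_k$ is flip-adjacent to~$A_{k+1}$, and $A_k$ and~$B_k$ must have \emph{opposite} parity within~$\cN_k$. Since every flip toggles the global parity, the parity constraint is automatically consistent along the chain once $M$ and~$M'$ have opposite parity: the total number of vertices is even, each set is balanced, and so each subpath has odd length. Concatenating the Hamilton paths given by the induction hypothesis yields the result.

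The main obstacle is this last choice. For each~$k$ we need many matchings $B_k\in\cN_k$ of the required parity that have a type-i flip into~$\cN_{k+1}$, so that at the final step we can avoid $A_{2n-2}=M'$ (and avoid the wrong parity there). Because type-i flips constrain orientations, the number of such boundary matchings is smaller than in the undirected case. I would first count them explicitly: they are a constant fraction of~$\cN_k$ once $m\geq 5$. A further risk is that, if the quotient ordering fails for certain positions of $M$ and~$M'$, a small case split on the relative orientations of their edges at vertex~$1$ will be needed.
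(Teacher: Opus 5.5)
\textbf{There is a genuine gap in the step where you order the $2n-2$ blocks.} Your overall scheme matches the paper's: induction from the computer-checked case $m=4$, blocks $\cM_i^{\pm}$ according to the edge at vertex~$1$, and concatenation of Hamilton paths whose ends $A_k,B_k$ have opposite parity. But no choice of vertex~$1$ and no ordering of whole blocks can close the gap, so the case split you anticipate will not work.

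In a type-i flip the alternating 4-cycle is consistently oriented: edges $(a,b),(c,d)$ become $(d,a),(b,c)$, so each of the four vertices switches between being a tail and a head. Hence every flip that touches the edge at vertex~$1$ goes between some $\cM_i^+$ and some $\cM_j^-$ with $j\neq i$. The quotient graph on the blocks is therefore $K_{n-1,n-1}$ minus a perfect matching.

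An ordering that uses each block exactly once is a Hamilton path in this balanced bipartite graph. So its first and last blocks must have opposite signs, i.e., vertex~$1$ must be a tail in one of $M,M'$ and a head in the other. This is impossible whenever $M$ and $M'$ have the same set of tails. An example is when $M'$ is obtained from $M$ by exchanging the heads of two of its edges. Such $M\neq M'$ have opposite parity, since their permutations differ by one transposition, so the theorem must cover them. Yet for every choice of vertex~$1$ they lie in blocks of the same sign.

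\textbf{The missing idea is that some block must be visited twice.} The paper handles exactly this case, only sketching it, by splitting one intermediate block into two parts traversed at different times. Say $M\in\cM_i^+$, $M'\in\cM_j^+$, and the split block is $\cM_k^+$ with $k\notin\{i,j\}$. Then there are $2n-1$ blocks alternating $+,-,+,\ldots,-,+$.

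This needs an argument beyond the induction hypothesis applied to whole blocks: each part must be covered by a path whose ends connect to the neighbouring $-$ blocks. One way is to take a Hamilton path of $\cM_k^+$ supplied by induction and cut it at an edge $XY$. You then choose the $-$ block after the first part and the one before the second part according to the heads of the edges in $X$ and in~$Y$, respectively. Since $X$ and $Y$ are adjacent, the parities of all entry and exit matchings stay consistent automatically.

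The remaining ingredients of your plan are fine: equal class sizes via a parity-reversing vertex transposition, parity bookkeeping along the chain, and enough exit matchings $B_k$ for $m\geq 5$.
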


We postpone the proof of Theorem~\ref{thm:CED-typei-lace} to after the following result and its proof.

\begin{theorem}
\label{thm:CED-typeii-lace}
Let $m\geq 3$ and~$n\coloneq 2m$.
The flip graph of directed perfect matchings of~$K_n$ under 2-flips of type~ii is Hamilton-laceable.
\end{theorem}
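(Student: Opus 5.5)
We first pin down what a type-ii flip does. In the permutation encoding from the proof of Theorem~\ref{thm:CED-basic}, a type-ii flip turns the directed edges $(a,b),(c,d)$ into $(a,d),(c,b)$: the tails stay, the heads are exchanged. (As literally written in that proof the second pair reads $(\pi_{2i},\pi_{2j})$; we read this as a typo, and treat the flip as a single transposition of the two heads.) Concretely, a type-ii flip transposes $\pi_{2i}\leftrightarrow\pi_{2j}$, or equivalently $\pi_{2i-1}\leftrightarrow\pi_{2j-1}$; we should double-check which of these it is against Figure~\ref{fig:dir-2flips}. Either way it is a single transposition, so the bipartition is given by parity, as already noted in Theorem~\ref{thm:CED-basic}.

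We then use induction on~$m$, modelled on the proof of Theorem~\ref{thm:CE2-hconn}, but now with laceability as the invariant. For the base case $m=3$ (there are $2^3\cdot 15=120$ matchings) we would verify laceability by computer, as was done for the small base cases of Theorem~\ref{thm:CO1-hconn}. A hand argument could instead use a rotational symmetry, but a computer check is the honest plan.

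\textbf{Induction step.} Let $m\geq 4$, and let $M,M'$ have opposite parities. We partition the matchings by the directed edge at vertex~$1$. Let $\cM_{i}^{+}$ be the set of matchings containing $(1,i)$ and $\cM_i^{-}$ the set containing $(i,1)$, for $i\in[2,n]$. Fixing this edge, each class induces a copy of the flip graph for $m-1$, and parity restricted to the class is the parity of the remaining directed matching, up to a global sign. So each class is Hamilton-laceable by induction.

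\textbf{Moves between classes.} A type-ii flip involving the edge at~$1$ changes $(1,i)$ to $(1,j)$ when $1$ is a tail. When $1$ is a head it changes $(i,1)$ to $(j,1)$. A type-ii flip never changes whether $1$ is a tail or a head. This is a problem: the out-classes and the in-classes are never joined through vertex~$1$. We therefore refine the choice. We still split on the edge at~$1$, but order the classes so that consecutive classes are linked by flips not involving~$1$? That is impossible, since such flips stay inside a class. The fix is to split instead on the edge at a vertex chosen to be a tail in both $M$ and $M'$. If no such vertex exists, we pick a head in both, and by symmetry (reversing all orientations maps type-ii to itself) this case is equivalent. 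However, a tail in one matching might be a head in the other for every vertex. Then an extra preliminary step is needed. By the pigeonhole principle, at least one vertex is a tail in both or a head in both unless the tail sets of $M$ and $M'$ are complementary. In that complementary case we handle things by first taking the Hamilton path in a different component order, or by choosing a suitable neighbour $M''$ of $M$. The latter needs care and is the main obstacle.

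\textbf{Assembling the path.} Suppose vertex~$1$ is a tail in all matchings considered. This cannot cover everything either, since matchings where $1$ is a head must also be visited. So within the chain we must alternate: in-classes $\cM_i^-$ link among themselves, out-classes link among themselves, and a bridge between the two families must go through a flip where vertex~$1$ changes role. Such a flip does not exist. This suggests instead partitioning by the \emph{undirected} partner of vertex~$1$ together with which of its edges is fixed, or more simply by the vertex $\pi$-position. The concrete plan is as follows. Partition by the head $h$ of the edge whose tail is vertex~$1$, or by the tail $t$ of the edge whose head is~$1$, giving $2(n-1)$ blocks. Order all out-blocks $\cM_2^+,\dots,\cM_n^+$ consecutively, linked by type-ii flips at~$1$. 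The in-blocks are then reached via one intermediate flip sequence that reverses an edge, namely the 5-flip gadget of Figure~\ref{fig:d2flips-typeii} applied inside the last out-block and the first in-block jointly. As in Theorem~\ref{thm:CE2-hconn}, we choose the entry and exit points $A_i,B_i$ with the correct parities, so that laceability in each block applies. The number of choices for $B_{i}$ for $m\geq4$ lets us avoid the prescribed endpoint. Getting this parity bookkeeping and the single bridge right is the step we expect to be hardest.
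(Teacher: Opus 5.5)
There is a genuine gap, and it is at the very first step: you are using the wrong flip. The move $(a,b),(c,d)\mapsto(a,d),(c,b)$ keeps tails as tails and swaps the heads, i.e.\ $\pi_{2i}\leftrightarrow\pi_{2j}$. That is the \emph{type-iii} flip. By the proof of Theorem~\ref{thm:CED-basic} it preserves the in/out role of every vertex, and its flip graph has $\binom{2m}{m}$ components. Under your reading the statement is therefore false, which is exactly why you find that $\cM_j^{+}$ and $\cM_k^{-}$ can never be joined.

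The paper states that a type-ii flip is the single transposition $\pi_{2i}\leftrightarrow\pi_{2j-1}$ of a head with a tail; the misprint is in the first displayed pair, not the second. So a type-ii flip is $(a,b),(c,d)\mapsto(a,c),(b,d)$. Source and sink are opposite on the 4-cycle, and each combination (tail/head before)$\times$(tail/head after) occurs at exactly one of $a,b,c,d$, as the proof of Theorem~\ref{thm:CED-hconn} also notes.

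Your proposed bridge fails on either reading. Role-preserving flips can never reverse an edge, so the gadget of Figure~\ref{fig:d2flips-typeii} cannot exist for your flip. With the true flip, a multi-flip gadget that reverses the edge at vertex~$1$ passes through matchings lying in other blocks $\cM_\ell^{\pm}$. Those are already covered by the inductive Hamilton paths, so they would be visited twice. A gadget avoiding vertex~$1$, on the other hand, never leaves its block.

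With the correct flip no bridge is needed, and your block skeleton (blocks $\cM_i^{\pm}$, induction inside blocks, entry/exit matchings $A_i,B_i$ of opposite parity) goes through directly, as in the paper. The transitions are as follows:
\begin{itemize}
\item If vertex~$1$ plays the role of $a$ or $c$, the flip moves from $\cM_j^{+}$ to $\cM_k^{+}$ or $\cM_k^{-}$ respectively, where $k\neq j$ is a \emph{tail} of its edge.
\item If vertex~$1$ plays the role of $b$ or $d$, the flip moves from $\cM_j^{-}$ to $\cM_k^{+}$ or $\cM_k^{-}$ respectively, where $k\neq j$ is a \emph{head}.
\end{itemize}
Hence all $2n-2$ blocks can be chained in any order with $M$'s block first, $M'$'s block last, and no $\cM_j^{+}$ adjacent to $\cM_j^{-}$. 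Each block has many admissible exit matchings $B_i$: prescribe the edge at $k$ with the required direction, and reverse one of the remaining $m-2$ edges to fix parity. This freedom also lets you avoid landing on $M'$ at the last transition.

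The labelling only serves to put $M$ and $M'$ into different blocks. Use a reversed edge $(1,2)\in M$, $(2,1)\in M'$ if one exists; otherwise take $(1,2)\in M$ on a cycle of $M\triangle M'$, with preceding vertex~$3$. Your case analysis on the tail sets of $M$ and $M'$ is unnecessary.
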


\begin{proof}
We argue by induction on~$m$.
For $m=3$ the flip graph has 120 vertices, and can be checked to be Hamilton-laceable with computer help.

For the induction step, let $m\geq 4$\footnote{In fact, the following inductive arguments already work under the assumption~$m\geq 3$, but this would not be meaningful, as the base case would be missing.}, and let $M$ and~$M'$ be two distinct directed perfect matchings of~$K_n$ with opposite parity.
We label the vertices $1,\ldots,n$ as follows:
If $M\triangle M'$ contains a directed cycle of length~2, we label its vertices $1,2$ such that $(1,2)\in M$ and $(2,1)\in M'$.
If $M\triangle M'$ contains no directed cycles of length~2, but only longer cycles, then let~$(1,2)\in M$ be an edge on one of them, and label the previous vertex on the cycle~3 such that either $(1,3)\in M'$ or $(3,1)\in M'$.
The remaining vertex labels~$3,\ldots,n$ or $4,\ldots,n$, respectively, are assigned arbitrarily.

We partition the set of all directed perfect matchings of~$K_n$ according to the edge starting or ending at vertex~1.
Specifically, for $i=2,\ldots,n$, let $\cM_i^+$ and~$\cM_i^-$ be the sets of matchings that contain the fixed edge~$(1,i)$ or~$(i,1)$, respectively.
We choose a total ordering $\cN=(\cN_1,\ldots,\cN_{2n-2})$ of those sets such that $M\in \cN_1$ and~$M'\in \cN_{2n-2}$, with the additional requirement that if $\cN_i=\cM_j^+$ for some $j\in[2,n]$ then $\cN_{i+1}\neq \cM_j^-$, and similarly if $\cN_i=\cM_j^-$ for some $j\in[2,n]$ then $\cN_{i+1}\neq \cM_j^+$ for all $i=1,\ldots,2n-3$.
We then choose pairs of matchings $A_i,B_i\in\cN_i$ for $i=1,\ldots,2n-2$ satisfying the following conditions:
\begin{itemize}[topsep=1mm,leftmargin=4mm]
\item $A_1=M$ and $B_{2n-2}=M'$;
\item $A_i$ and $B_i$ have opposite parity for $i=1,\ldots,2n-2$ (in particular, $A_i\neq B_i$);
\item $A_{i+1}$ differs from $B_i$ by a type-ii flip for all $i=1,\ldots,2n-3$.
\end{itemize}
Note that each set~$\cN_i$ contains at least three matchings for which a type-ii flip leads to a matching in~$\cN_{i+1}$.
Specifically, if $\cN_i=\cM_j^+$ for some $j\in[2,n]$, then these are matchings in which the edge incident with the vertex~$k$ goes out of~$k$, whereas if $\cN_i=\cM_j^-$, then these are matchings in which the corresponding edge goes into~$k$ (the parity can be fixed by directing one of the remaining $m-2$ edges appropriately); see Figure~\ref{fig:AiBi}.
Thus, when $A_{2n-3}$ is chosen, there are at least two choices for~$B_{2n-3}$, and not both of their neighbors in~$\cN_{2n-2}$ can be equal to the prescribed last matching~$B_{2n-2}=M'$, so one of the neighbors can become~$A_{2n-2}$.

\begin{figure}[h!]
\includegraphics[page=4]{d2flips}
\caption{Four different cases when selecting the matchings~$B_i$ in the proof of Theorem~\ref{thm:CED-typeii-lace}.
The top two edges are fixed by the order in which the sets~$\cM_j^+$ and~$\cM_j^-$ are visited, and the bottom two edges are chosen so that the transition can be realized with a type-ii flip.}
\label{fig:AiBi}
\end{figure}

By induction, for $i=1,\ldots,2n-2$ there is a path~$P_i$ in the flip graph from~$A_i$ to~$B_i$ through all matchings in~$\cN_i$.
The concatenation $P_1,P_2,\ldots,P_{2n-2}$ is the desired Hamilton path in the entire flip graph.
\end{proof}

\begin{proof}[Proof of Theorem~\ref{thm:CED-typei-lace}]
We argue by induction on~$m$.
For $m=4$ the flip graph has 1680 vertices, and can be checked to be Hamilton-laceable with computer help.

The proof of the induction step is analogous to the preceding proof of Theorem~\ref{thm:CED-typeii-lace}.
There is, however, one subtlety, namely flips of type~i are more restrictive than type-ii flips in that they only allow transitions~$\cM_i^+\leftrightarrow \cM_j^-$ for some $i,j\in[2,n]$ (shown in the top right and bottom left of Figure~\ref{fig:AiBi}), but never $\cM_i^+\leftrightarrow \cM_j^+$ or $\cM_i^-\leftrightarrow \cM_j^-$ (top left or bottom right in the figure, respectively).
In other words, we have to alternate between sets of matchings in which the fixed edge incident with vertex~1 goes out of it and those in which it goes into it.
This causes problems if both the start matching~$M$ and target matching~$M'$ are of the first type (+), namely if $(1,2)\in M$ (i.e., $M\in\cM_2^+$) and $(1,3)\in M'$ (i.e., $M'\in\cM_3^+$).
These problems can be resolved by splitting one of the intermediate sets~$\cM_i^+$, $i\in[2,n]$, into two subsets, thus allowing a consistent alternating pattern.
We omit the details.
\end{proof}

We also establish the following result, analogous to Theorem~\ref{thm:CE2-hconn}.

\begin{theorem}
\label{thm:CED-hconn}
Let $m\geq 2$ and~$n\coloneq 2m$.
The flip graph of directed perfect matchings of~$K_n$ under 2-flips of type~iv and type~v plus exactly one type from~$\{i,ii,iii\}$ is Hamilton-connected.
\end{theorem}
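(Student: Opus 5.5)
We argue by induction on~$m$, following the scheme of Theorems~\ref{thm:CE2-hconn} and~\ref{thm:CED-typeii-lace}. For the base case $m=2$, the flip graph has $2^2\cdot 3=12$ vertices (Figure~\ref{fig:G4d}~(a)), and Hamilton-connectedness for each of the three choices of type i, ii or iii together with types iv and~v is checked directly, or by computer. Since type~iv and type~v flips are inverse to each other, their union gives undirected edges. The extra type (i, ii or iii) is exactly what destroys the parity obstruction of Theorem~\ref{thm:CED-basic}: types i and ii toggle the parity, type~iv/v preserves it, and type~iii also toggles it (it is a transposition on the $K_{m,m}$ side). The combined graph is therefore non-bipartite, which is consistent with Hamilton-connectedness.

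For the induction step, let $m\geq 3$ and let $M\neq M'$. Label the vertices so that $(1,2)\in M$ or $(2,1)\in M$. We then choose label~$3$ so that the edge at vertex~1 in~$M'$ lies in a set different from that of~$M$. If $M$ and~$M'$ agree on the edge at every vertex, swap the roles; if they differ only in orientation at vertex~1, their fixed-edge sets still differ. As in the proof of Theorem~\ref{thm:CED-typeii-lace}, partition all matchings into the $2n-2$ sets $\cM_j^{\pm}$ according to the directed edge at vertex~$1$. Each set induces a copy of the flip graph for $m-1$, since fixing one directed edge leaves an arbitrary directed perfect matching on $n-2$ vertices with the same flip types. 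Hence each $\cM_j^\pm$ is Hamilton-connected by induction, and we no longer need parity bookkeeping for the endpoints $A_i\neq B_i$.

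Next, order the sets as $\cN_1,\ldots,\cN_{2n-2}$ with $M\in\cN_1$ and $M'\in\cN_{2n-2}$, so that consecutive sets are joined by the available flips. Type~iv/v flips alone connect $\cM_j^{s}$ to $\cM_k^{t}$ for $j\neq k$ and suitable orientations $s,t$: a type~iv flip at vertex~1 keeps the edge at~1 outgoing or incoming according to the pattern in Figure~\ref{fig:dir-2flips}, and the reverse flip (type~v) supplies the complementary orientation change. The additional type provides the remaining transitions. For each admissible consecutive pair, I would list, as in Figure~\ref{fig:AiBi}, which choices of the two lower edges make the transition a single flip. Then I would verify that each $\cN_i$ contains at least three matchings with a neighbor in $\cN_{i+1}$. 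This holds because $m-1\geq 2$ leaves free edges to choose. With this, the greedy choice of $A_i,B_i$ with $A_{i+1}$ adjacent to~$B_i$ goes through, and the final step avoids $M'$ exactly as before. Concatenating the inductive paths gives the Hamilton path.

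The main obstacle is constructing the ordering~$\cN$. It must be a Hamilton path in the auxiliary ``quotient'' graph on the $2n-2$ sets $\cM_j^\pm$, and it must start and end at prescribed sets, for each of the three possible extra types. For type~iii the allowed transitions at vertex~1 are the most restricted, since type~iii flips preserve out/in status at every vertex. I would first determine the quotient graph explicitly for iv/v alone. I expect it to contain all transitions $\cM_j^{+}\leftrightarrow\cM_k^{-}$ and $\cM_j^{+}\leftrightarrow\cM_k^{+}$ in some combination that already makes it Hamilton-connected on $2n-2\geq 10$ vertices. Then I would show that the extra type only adds edges. If some start/end combination still fails, I would fall back on splitting one intermediate set into two halves, as in the proof of Theorem~\ref{thm:CED-typei-lace}.
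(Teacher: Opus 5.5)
Your overall scheme is the paper's: induction from $m-1$ to $m$, partition into the $2n-2$ sets $\cM_j^{\pm}$, Hamilton-connectedness inside each set by induction, chaining through $A_i,B_i$, and the base case $m=2$ checked by hand or computer. The gap is the step you flag as the main obstacle. You never determine which transitions $\cM_j^{s}\to\cM_k^{t}$ a single flip realizes. You only expect the quotient graph to admit the required ordering, with a fallback if it does not.

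Your heuristic is also inaccurate. It is not true that type~iv and type~v supply complementary orientation patterns at vertex~$1$, nor that the extra type ``provides the remaining transitions''. The missing observation concerns the alternating $4$-cycle of a type-iv flip, with old edges $(a,b),(c,d)$ and new edges $(a,d),(b,c)$. Each of the four combinations (matching edge in/out, non-matching edge in/out) occurs at exactly one vertex: $a$ is (out,out), $b$ is (in,out), $c$ is (out,in), $d$ is (in,in). The same holds for type~v.

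Place vertex~$1$ at the corner whose combination is $(s,t)$. This shows that for all $j\neq k$ and all $s,t\in\{+,-\}$, a single type-iv flip leads from $\cM_j^{s}$ to $\cM_k^{t}$. The admissible $B_i$ are exactly the matchings in $\cM_j^{s}$ in which $k$ is matched to some third vertex with a prescribed orientation of that edge. There are $(n-3)\,2^{m-2}(n-5)!!\geq 6$ of them for $m\geq 3$, and the flip out of each is uniquely determined.

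Conversely, no $2$-flip of any type joins $\cM_j^{+}$ and $\cM_j^{-}$, since every $2$-flip changes the partner of vertex~$1$. So the quotient graph is $K_{2n-2}$ minus the perfect matching $\{\cM_j^{+},\cM_j^{-}\}$. This trivially has a Hamilton path between any two distinct sets, which is the ordering with the non-consecutiveness constraint used in the type-ii proof. No splitting of sets is needed.

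In particular, the extra type from $\{\mathrm{i},\mathrm{ii},\mathrm{iii}\}$ plays no role in the induction step. It enters only through the base case $m=2$, via the induction hypothesis. With types iv/v alone, the same step would go through, but the base fails because of the two parity components. Finally, in your labeling, vertex~$1$ must simply be a vertex at which the directed edges of $M$ and $M'$ differ, which exists since $M\neq M'$. Your clause about $M$ and $M'$ agreeing at every vertex is vacuous.
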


The set of flip types for achieving Hamilton-connectedness is minimal, as any subset of flip types from $\{i,ii,iii\}$ always yields a bipartite flip graph.

\begin{proof}
We first observe that the inductive argument given in the proof of Theorem~\ref{thm:CED-typeii-lace} goes through already for $m\geq 3$ and the stronger property of Hamilton-connectedness (replacing the parity condition on the~$A_i$ and~$B_i$ simply by $A_i\neq B_i$) for either of the following types of flips: type~ii, type~iv or type~v.
These three types of flips have in common that in the alternating 4-cycles shown in Figure~\ref{fig:dir-2flips}, each of the four possible combinations, namely in-edge/out-edge and matching edge/non-matching edge appears at exactly one of the 4 vertices.
This means we can choose the bottom two edges in Figure~\ref{fig:AiBi} appropriately for each of these flip types, so that all of the required transitions can be realized.

Thus it is enough to check the three base cases for $m=2$, either by hand using Figure~\ref{fig:G4d}~(a) or with computer help.
\end{proof}

\subsection{Directed almost-perfect matchings}

We now consider \emph{directed} almost-perfect matchings of the complete graph with an odd number of vertices.
In the undirected case, there was a one-to-one correspondence between perfect matchings of~$K_{2m}$ and almost-perfect matchings of~$K_{2m-1}$, given by removing a fixed vertex.
In the directed case, the same operation is a two-to-one correspondence, because the direction of the removed edge is `forgotten'.
Consequently, the number of directed almost-perfect matchings of~$K_n$, $n=2m+1$, is $2^m n!!$ (OEIS~A000407).

There are four different types of 1-flips, shown in Figure~\ref{fig:dir-1flips}.

\begin{figure}[h!]
\includegraphics[page=5]{d2flips}
\caption{The four different types of directed 1-flips.
The solid edge is in the original matching, and the dashed edge is in the target matching.}
\label{fig:dir-1flips}
\end{figure}

Flips of type~i and~ii are self-inverse, whereas type~iii and~iv are inverse to each other.

\begin{figure}[h!]
\includegraphics[page=3]{G4dir}
\caption{Flip graph of directed almost-perfect matchings of~$K_3$, with edges colored according to the different flip types.
For type~iii and~iv, the same color is used, and each of these two types corresponds to one of the two orientations of the edge.
Matchings with even and odd parity are non-shaded and shaded, respectively.}
\label{fig:G3d}
\end{figure}

\subsection{Basic properties}

For a permutation~$\pi=(\pi_1,\ldots,\pi_n)$ of $[n]=[2m-1]$, we let $M_\pi$ be the directed almost-perfect matching with the edges~$(\pi_1,\pi_2),(\pi_3,\pi_4),\ldots,(\pi_{n-2},\pi_{n-1})$, leaving the vertex~$\pi_n$ unmatched.
Note that exactly $(m-1)!$ permutations~$\pi$ of~$[n]$ correspond to the same matching~$M_\pi$, namely those that differ only in the ordering of the pairs~$(\pi_{2i-1},\pi_{2i})$, $i\in[m-1]$.
All of those permutations have the same parity, which defines the \defi{parity} of the matching.

\begin{theorem}
\label{thm:COD-basic}
Let $m\geq 1$ and~$n\coloneq 2m+1$.
The flip graph of directed almost-perfect matchings of~$K_n$ under 1-flips of exactly one of the four types has the following properties:
\begin{itemize}[topsep=1mm,leftmargin=4mm]
\item type~i or ii: bipartite and has exactly $\binom{n}{m}=\binom{2m+1}{m}$ isomorphic connected components;
\item type~iii or iv: non-bipartite and has exactly two isomorphic connected components.
\end{itemize}
\end{theorem}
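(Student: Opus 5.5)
The plan is to treat the two pairs of flip types separately. For each, I will find an invariant that separates components, and then show connectivity within each invariant class by reducing to an earlier result or to a group-generation fact.

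To fix the types, consider a 1-flip with unmatched vertex~$j$ and matching edge between~$k$ and~$\ell$, replaced by an edge between~$j$ and~$\ell$. The vertex~$\ell$ either keeps its role or switches it. Keeping the role means tail stays tail, $(\ell,k)\to(\ell,j)$, or head stays head, $(k,\ell)\to(j,\ell)$; these are the self-inverse types~i and~ii. Switching the role, $(k,\ell)\to(\ell,j)$ and its inverse, gives types~iii and~iv.

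\textbf{Types i and ii.} Say type~i keeps $\ell$ as a tail; type~ii is symmetric, with heads and tails exchanged. Then the set $T$ of tail vertices is invariant, and $|T|=m$, which gives at least $\binom{n}{m}$ classes. Fix~$T$. A matching is then an injection from~$T$ into the $m+1$ vertices outside~$T$, that is, an almost-perfect matching of $K_{m,m+1}$ with classes $T$ and $[n]\setminus T$. A type-i flip is exactly a 1-flip there, since a head is exchanged with the unmatched vertex. By the encoding before Theorem~\ref{thm:COB-lace}, this is the graph of star transpositions on permutations of $[m+1]$. Star transpositions generate $S_{m+1}$, so each class is connected. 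Each star transposition changes parity, so each class is bipartite. For $m=1$ the class is a single edge, which fits both claims. Relabelling vertices maps any~$T$ to any other, so the components are isomorphic.

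\textbf{Types iii and iv.} Encode $M=M_\pi$ with $\pi_n$ the unmatched vertex. The flip turning the pair $(k,\ell)$ with unmatched~$j$ into the pair $(\ell,j)$ with unmatched~$k$ turns $(\dots,k,\ell,\dots,j)$ into $(\dots,\ell,j,\dots,k)$. This is a 3-cycle on the positions $\{2i-1,2i,n\}$, hence even, so parity is invariant and there are at least two components. Applying the same-type flip three times returns to the start, via $(k,\ell),j \to (\ell,j),k \to (j,k),\ell \to (k,\ell),j$. Hence the graph contains a triangle, so it is not bipartite. This also shows that reachability is symmetric despite the graph being directed.

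For connectivity within a parity class, the reachable permutations are obtained by composing the position 3-cycles $(2i-1,2i,n)$, $i\in[m]$, with block reorderings of the pairs, which do not change the matching. I will invoke the standard fact that 3-cycles whose supports form a connected hypergraph generate the alternating group on the union of their supports. Here all supports share position~$n$ and together cover $[n]$, so they generate $A_n$. Thus any two matchings of equal parity are connected. This is the step I expect to need the most care: I must check that the position-based description of flips is consistent with the freedom of reordering pairs. I must also confirm that both parities actually occur, which holds because swapping the two entries of any pair flips parity. Finally, the two components are isomorphic, since the transposition of two vertex labels is a flip-graph automorphism exchanging them, or simply by vertex-transitivity.
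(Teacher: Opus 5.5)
Your proof is correct. For types~i and~ii it follows the paper: the tail set is invariant, and each class is identified with 1-flips in $K_{m,m+1}$. You make this slightly more self-contained by deriving connectivity and bipartiteness directly from the fact that star transpositions generate $S_{m+1}$ and are odd; this also covers $m\le 2$, where Theorem~\ref{thm:COB-lace} is not stated.

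For types~iii and~iv you use the same parity invariant as the paper and a triangle, but your connectivity argument is genuinely different. The paper builds a gadget of four type-iii flips whose net effect is the 3-cycle $123\cdots n\mapsto 231\cdots n$ inside two pairs, leaving the unmatched vertex fixed. It then appeals to the fact that such shifts generate the alternating group. You instead observe that a single flip acts on any representative~$\pi$ as right multiplication by the position 3-cycle on $\{2i-1,2i,n\}$. Hence the matchings reachable from~$M_\pi$ are exactly the $M_{\pi w}$ with $w$ in the group these 3-cycles generate. This also settles your worry about block reorderings: they are never needed.

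Your route needs no flip gadget, treats $m=1$ uniformly, and uses generators that already move the unmatched position. The price is a less elementary group fact: 3-cycles whose supports form a connected hypergraph generate the alternating group. That fact is true, but give it a line of justification rather than calling it standard. Connectedness gives transitivity. A 3-cycle whose support meets two or three blocks cannot preserve a nontrivial block system, so the group is primitive. Jordan's theorem then yields $A_n$.
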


\begin{proof}
The proof is split into cases according to the different types of flips.

{\bf Type-i and type-ii flips:}
Reversing the direction of all edges in the matchings is an isomorphism between the two flips graphs.
Thus, for the rest of the proof it is sufficient to consider only type-i flips.
Note that under this type of flip, every vertex retains the property of being incident with an out-edge.
Thus, the entire flip graph has $\binom{n}{m}$ connected components, one for each choice of which $m$ vertices out of the $n$ vertices have out-edges, and each of these components is isomorphic to the flip graph for almost-perfect matchings of~$K_{m,m+1}$ discussed in Theorem~\ref{thm:COB-lace}.

{\bf Type-iii and type-iv flips:}
As they are inverse to each other, the two corresponding flip graphs differ only in the orientation of their edges.
Thus, for the rest of the proof it is sufficient to consider only type-iii flips.

As can be see from Figure~\ref{fig:G3d}, the flip graph has triangles, so it is not bipartite.

To argue that the flip graph has at least two connected components, consider a permutation~$\pi$ associated to the matching.
A type-iii flip in the matching changes a pair of entries~$(\pi_{2i-1},\pi_{2i})$ and the last entry~$\pi_n$ to $(\pi_n,\pi_{2n-1})$ and~$\pi_{2i}$, respectively, which can be seen as a sequence of two transpositions.
Thus the flip graph has at least two connected components, induced by all matchings with even or odd parity.

\begin{figure}[b!]
\includegraphics[page=6,scale=0.8]{d2flips}
\caption{Sequence of type-iii flips to cyclically shift three consecutive elements in the permutation.}
\label{fig:d1flips-typeiii}
\end{figure}

It remains to show that each of these two subgraphs is connected.
For $m=1$ this can be checked by consulting Figure~\ref{fig:G3d}.
For $m\geq 2$ we consider two permutations~$\pi$ and~$\sigma$ that differ in cyclically shifting three consecutive elements, w.l.o.g.\ $\pi=12345\cdots n=\id$ and $\sigma=23145\cdots n$.
Figure~\ref{fig:d1flips-typeiii} shows a sequence of four type-iii flips applied to the matching~$M_\pi$ to exchange the edges $(1,2)$ and $(3,4)$ for the edges $(1,4)$ and $(2,3)$, and we note that this yields a matching~$M_\sigma$.
It is well-known that shifting 3-cycles generate the alternating group, and thus any two matchings with the same parity are connected to each other.
\end{proof}

We note that for any combination of two out of four types, except for the combination~i+ii, the corresponding flip graph is connected.
Indeed, one can check that the two types of flips are enough to reverse a single edge.

\subsection{Hamiltonicity properties}

We also establish the following result, analogous to Theorem~\ref{thm:CO1-hconn}.

\begin{theorem}
\label{thm:COD-hconn}
Let $m\geq 1$ and~$n\coloneq 2m+1$.
The flip graph of directed almost-perfect matchings of~$K_n$ under 1-flips of all four types is Hamilton-connected.
\end{theorem}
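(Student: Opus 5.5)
We plan to mimic the partition-and-concatenate induction used for Theorem~\ref{thm:CED-typeii-lace} and Theorem~\ref{thm:COd-hconn}. The 1-flip types~iii and~iv alone already connect a parity class (Theorem~\ref{thm:COD-basic}), and types~i and~ii add flips that change the parity. So with all four types the flip graph is non-bipartite, and the only target is Hamilton-connectedness. We induct on~$m$; each inductive step removes one fixed directed edge incident with vertex~$1$, which reduces to directed almost-perfect matchings of~$K_{n-2}$, an instance of the same problem with $m-1$. The base case $m=1$ is the 12-vertex graph of Figure~\ref{fig:G3d}, which we check by hand. We also check $m=2$ by computer, to be safe where the sets below are too small for the free choices.

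For the step, fix distinct $M,M'$ and label vertex~$1$ as in the earlier proofs. If $M\triangle M'$ (ignoring directions) contains a path, let vertex~$1$ be an endpoint of a terminal edge of that path, so that $1$ is unmatched in one of $M,M'$ and matched in the other. Otherwise choose vertex~$1$ on an edge of $M$ whose directed version is not in~$M'$. This edge exists, and its incident edge at~$1$ in~$M'$ differs in partner or in orientation. We partition the matchings into $\cM_0$ (vertex~$1$ unmatched) and $\cM_i^{+}$, $\cM_i^{-}$ for $i\in[2,n]$ (edge $(1,i)$ or $(i,1)$ present). This gives $2n-1$ classes. Each $\cM_i^\pm$ is isomorphic to the flip graph for $m-1$ on $K_{n-2}$, restricted to the same four flip types. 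The class $\cM_0$ is isomorphic to directed perfect matchings of~$K_{2m}$ under the directed 2-flips that a 1-flip pair produces, which is awkward.

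To avoid this awkward class, we instead split $\cM_0$ further according to the matched edge at vertex~$2$. That is, we use two fixed vertices, in the spirit of Case~(a) of Theorem~\ref{thm:CO1-hconn}, so that every class is isomorphic to a smaller instance of the same problem. Alternatively, we absorb $\cM_0$ with the ``tick'' trick of Figure~\ref{fig:tick2}. Every matching with $1$ unmatched is one 1-flip away from a matching in some $\cM_i^\pm$, so we can visit each such matching as a detour. Then we order the classes $\cN_1,\dots,\cN_N$ with $M\in\cN_1$ and $M'\in\cN_N$, and pick $A_i\ne B_i$ in $\cN_i$ with $A_{i+1}$ one flip from~$B_i$. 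The key point is the transitions. Between $\cM_j^\pm$ and $\cM_k^\pm$, a 1-flip changing the edge at vertex~$1$ needs $k$ to be unmatched. With all four types available, the new edge can be given either orientation. So any ordering of the classes is realizable, and each class contains at least three matchings, with $k$ unmatched and the other edges free, that admit the transition. As before, this leaves at least two choices for $B_{N-1}$, so $A_N\neq M'$ can be ensured.

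The main obstacle is the class of matchings with vertex~$1$ unmatched, and generally making sure every part really is a smaller instance of the same problem. Our first attempt is the $\cY/\cZ$ insertion pattern of Theorem~\ref{thm:CO1-hconn}. Along a path in $\cM_j^{+}$ whose unmatched vertex is $k\neq j$, the detour $u(\cdot,1)$ followed by returning via vertex~$j$ (using a type~iii or~iv flip to re-orient) visits a matching of $\cM_0$ and lands in~$\cM_j^{-}$. We would pair $\cM_j^{+}$ with $\cM_j^{-}$ and snake as in $P_{i,i+1}$, using that the class sizes $2^{m-1}(n-2)!!$ are even times odd. The parity of the snake must be rechecked, and this is where we expect the real work to be.
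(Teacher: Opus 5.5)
\textbf{Verdict: genuine gap.} Your proposal stops at the step that carries the proof, and the ideas you offer there do not work as stated.

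\emph{$\cM_0$ cannot be a class.} It does not induce a flip graph of directed perfect matchings. Every 1-flip changes the unmatched vertex, so $\cM_0$ is an \emph{independent set} of size $2^m(2m-1)!!$. The same holds for every refinement of it, such as fixing the edge at vertex~2. Your first alternative therefore produces classes that admit no internal path at all.

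\emph{Endpoints.} $\cM_0$ can only be covered by detours $M_s\to N_s\to\overline{M}_s$ with $N_s\in\cM_0$. Within that scheme you need $M,M'\notin\cM_0$. Your labeling, however, deliberately makes vertex~1 unmatched in $M$ or $M'$ when $M\triangle M'$ contains a path. Instead, take vertex~1 matched in both, with different directed incident edges; such a vertex always exists.

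\emph{Counting, the real failure.} A snake cannot skip a detour, because $M_s\in\cM_j^+$ and $\overline{M}_s\in\cM_j^-$ differ by reversing the edge at vertex~1, which is not a 1-flip. So snaking the pair $\cM_j^{\pm}$ consumes exactly $|\cM_j^+|=2^{m-1}(2m-1)!!=|\cM_0|/2$ distinct matchings of $\cM_0$. Doing this for every $j$ ``as in $P_{i,i+1}$'' would need $m|\cM_0|$ distinct detour vertices. You must snake exactly two indices and make the two consumed halves of $\cM_0$ complementary. Nothing in the proposal does this, and ``every matching with 1 unmatched is one flip away from some $\cM_i^\pm$'' does not supply it. The parity of $\ell$ that you flag is harmless: for even $\ell$ the snake simply starts and ends in $\cM_j^+$.

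\emph{How to close the gap in your one-step framework.} Pick two indices $a,b$ different from the indices of the classes containing $M$ and $M'$. For $a$, orient the new edge $\{a,k\}$ of each detour vertex so that $N_s$ has even parity as a directed perfect matching on $[2,n]$; for $b$, choose odd parity. Reversing one edge toggles parity, so the two snakes' detours are bijections onto the even and the odd matchings of $\cM_0$, respectively. You also need to forbid $\cM_j^+$ being directly followed by $\cM_j^-$: no 1-flip joins them, so ``any ordering is realizable'' is false. With these changes you get a valid induction $m-1\to m$, simpler than the paper's route.

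\emph{What the paper does instead.} It mirrors Theorem~\ref{thm:CO1-hconn}, inducting from $m-2$ to $m$ by separating the vertices $n-1,n$. It uses the sets $\cX_{i,j},\cY_i,\cZ_i,\cY_i'$ in four orientation varieties. The edgeless sets $\cY_i,\cZ_i,\cY_i'$ are in exact bijection with $\cX_{i,i+1}$. They are absorbed by the 4-step tick along the diagonal edges of four interleaved copies of the listing from Lemma~\ref{lem:pairs}, at the cost of twelve labeling cases.

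Minor: the graph for $m=1$ has 6 vertices (it is the octahedron), not 12.
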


\begin{figure}[b!]
\makebox[0cm]{ 
\includegraphics[page=1]{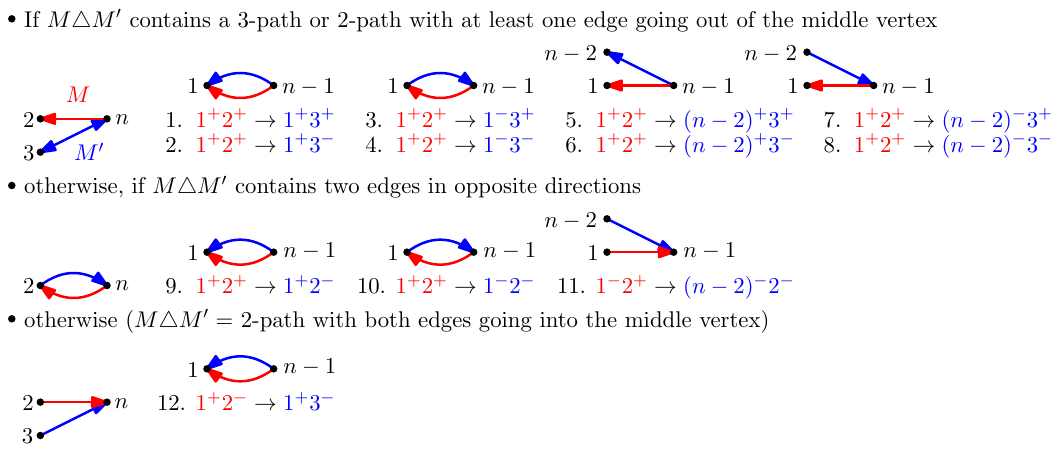}
}
\caption{Labeling of vertices in twelve different cases for the symmetric difference~$M\triangle M'$.}
\label{fig:dir-diff}
\end{figure}

\begin{proof}
\begin{figure}[t!]
\includegraphics[page=2,scale=0.85]{tick1}
\caption{Interleaving of pair connections in cases 1--8+12 from Figure~\ref{fig:dir-diff}.}
\label{fig:tableau1}
\end{figure}

\begin{figure}[t!]
\includegraphics[page=3,scale=0.85]{tick1}
\caption{Interleaving of pair connections in cases 9--11 from Figure~\ref{fig:dir-diff}.}
\label{fig:tableau2}
\end{figure}

The proof follows the same overall strategy as the proof of Theorem~\ref{thm:CO1-hconn} for the undirected setting presented in Section~\ref{sec:CO1-hconn}.
Namely, we use induction on~$m$.
The base cases~$m=1,2,3$ can be checked with computer help (the flip graphs have 6, 60 and 840 vertices, respectively).
For the induction step, let $m\geq 4$.
The induction step goes back by 2~steps from~$m$ to~$m-2$.
We start by suitably labeling the vertices~$1,\ldots,n$, where we distinguish twelve cases for the symmetric difference~$M\triangle M'$, as shown in Figure~\ref{fig:dir-diff}.

The set of all matchings is partitioned into sets~$\cX_{i,j}$, $\cY_i$, $\cY_i'$, $\cZ_i$, similarly to before, but now we also distinguish the direction of the two edges that are incident with the vertices~$n$ and~$n-1$ (or their neighbor~$i+1$; recall Figure~\ref{fig:tick2}) that are removed in the induction step.
Consequently, each of these sets comes in four varieties, indicated by two superscripts from the set~$\{+,-\}$ that specify the direction of the two relevant edges.
I.e., we have the sets $\cX_{i,j}^{++},\cX_{i,j}^{+-},\cX_{i,j}^{-+}$ and $\cX_{i,j}^{--}$, and similarly for the other three, yielding sixteen sets in total.
The rest of the proof is analogous to before.
Specifically, in each case, we traverse four copies of the listing of pairs that specifies the end vertices of the edges incident with~$n$ and~$n-1$.
The corresponding listings of pairs are shown in Figures~\ref{fig:tableau1} and~\ref{fig:tableau2} for each of the twelve cases.
Three of the twelve cases are symmetric, so only nine traversals are shown in the figures.
As before, these traversals use every diagonal edge~$\{(i,i+1),(i+1,i)\}$ from each variety, and when traversing such an edge we interleave the corresponding matchings in which vertex~$n-1$ or~$n$ are unmatched.
\end{proof}

\section{Geometric setting}
\label{sec:geom}

In this section we consider the setting of (undirected) straight-line non-crossing matchings on a set of points in the plane.
We mostly consider points in convex position, only the very last result (Theorem~\ref{thm:GOP}) is about points in general position.

\subsection{Non-crossing perfect matchings}

We first recap the results on perfect matchings on a convex set of points obtained by Hernando, Hurtado and Noy~\cite{MR1939072}, already discussed in the introduction.

\begin{theorem}[{\cite[Thm.~5.3]{MR1939072}}]
\label{thm:GE-even}
Let $m\geq 4$ be even and $n\coloneq 2m$.
The flip graph of non-crossing perfect matchings on $n$ points in convex position under 2-flips has a Hamilton cycle.
\end{theorem}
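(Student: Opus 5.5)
We plan to prove Theorem~\ref{thm:GE-even} by induction on~$m$, in steps of two, using the decomposition of non-crossing perfect matchings according to the partner of point~$1$. Label the points $1,\ldots,n$ in convex (cyclic) order. In a non-crossing perfect matching, point~$1$ is matched to an even-indexed point~$2k$, $k\in[m]$. The edge $\{1,2k\}$ splits the remaining points into an inner interval $[2,2k-1]$ with $2(k-1)$ points and an outer interval $[2k+1,n]$ with $2(m-k)$ points. Hence the class~$\cM_{2k}$ of matchings containing $\{1,2k\}$ is isomorphic to the Cartesian product of the flip graphs on $k-1$ and $m-k$ edges. Flips inside either side stay within the class. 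A flip involving $\{1,2k\}$ and an edge $\{2k\pm1,\cdot\}$ adjacent along the hull moves to class $\cM_{2k\pm 2}$. The basic flip we use is $\{1,2k\},\{2k+1,2k+2\}\mapsto\{1,2k+2\},\{2k,2k+1\}$, which links $\cM_{2k}$ to $\cM_{2k+2}$ whenever the outer side contains the boundary edge $\{2k+1,2k+2\}$.

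The plan is to traverse the classes in the order $\cM_2,\cM_4,\ldots,\cM_{2m}$ and then close the cycle with a flip from $\cM_{2m}$ back to $\cM_2$ (for instance $\{1,2m\},\{2,3\}\mapsto\{1,2\},\{3,2m\}$). Within each class we need a Hamilton path of the product graph with prescribed endpoints that contain the required hull edges. For this we need a stronger inductive hypothesis than the existence of a Hamilton cycle. A natural choice is a Hamilton path between two specified matchings containing the boundary edges $\{1,2\}$ and $\{n,1\}$, or a Hamilton cycle that uses a flip containing a boundary edge.

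Products of paths and cycles are handled by standard snake (boustrophedon) arguments. If one factor has a Hamilton path from $a$ to $a'$ and the other has a Hamilton path from $b$ to $b'$, then the product has a Hamilton path from $(a,b)$ to $(a',b)$ or to $(a',b')$, depending on the parity of the factor lengths. This parity bookkeeping is exactly where the restriction to even~$m$ enters. By the result of Hernando, Hurtado and Noy, the flip graph is bipartite, with class sizes determined by the parity of~$m$, and the Catalan numbers $C_j$ are odd exactly when $j=2^r-1$. We must check that the chosen endpoints in each class lie in the correct bipartition classes, so that consecutive classes glue together.

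The main obstacle is the parity bookkeeping. Some factors have small, odd, or imbalanced Catalan sizes (e.g.\ $C_1=1$, $C_2=2$, $C_3=5$), where Hamilton paths between arbitrary endpoints fail. We would handle these by treating classes with $k-1$ or $m-k$ at most~$3$ by hand, and by choosing the traversal direction of each class so that entry and exit matchings have the right colors. The base case $m=4$, with $C_4=14$ vertices, we verify directly. If the snake construction breaks for odd-sized factors, the fallback is to pair up the classes $\cM_{2k}$ and $\cM_{2m+2-2k}$, which are symmetric under reflection, and traverse each pair jointly.
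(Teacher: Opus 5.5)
The paper does not reprove this statement; it only cites Hernando, Hurtado and Noy. Judged on its own, your proposal has a genuine gap, and the gap is structural rather than a matter of parity bookkeeping. Your plan visits $\cM_2,\cM_4,\ldots,\cM_{2m}$ one after another, each class in a single contiguous block, and this is impossible for every even $m\geq 6$. The class $\cM_2$ (and symmetrically $\cM_{2m}$) is isomorphic to the flip graph on $m-1$ edges, and $m-1$ is odd. By Theorem~\ref{thm:GE-odd} applied to $m-1\geq 5$, its colour classes differ by $C_{(m-2)/2}$, so every path in it misses at least $C_{(m-2)/2}-1\geq 1$ vertices. Hence $\cM_2$ has no Hamilton path for any endpoints, and choosing traversal directions cannot help. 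The quantity that matters is this colour-class imbalance, not whether $C_j$ is odd.

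The same obstruction breaks the snake argument more broadly. Write $G_j$ for the flip graph on $j$ edges. Each class is $G_{k-1}\square G_{m-k}$ with exactly one odd index, and the odd-index factor has no Hamilton path once it has at least $5$ edges (e.g.\ $K_2\square G_5$ for $m=8$). The snake, however, needs Hamilton paths in both factors. Your induction in steps of two only tells you about even-index factors, and for odd index at least $5$ every Hamiltonian-type strengthening is false.

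So two ingredients are missing. First, you need a product lemma that uses only a weak property of the odd factor, for instance of Batagelj--Pisanski type ($T\square C_L$ is Hamiltonian whenever the tree $T$ has maximum degree at most $L$). Second, you need a way to glue cycles of different classes, e.g.\ along $4$-cycles formed by commuting flips, instead of concatenating fixed-endpoint Hamilton paths.

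In addition, the two extreme classes must be interleaved. Each $M\in\cM_2$ has exactly one neighbour in $\cM_{2m}$, via $\{1,2\},\{a,2m\}\mapsto\{1,2m\},\{2,a\}$, so $\cM_2\cup\cM_{2m}$ induces a prism $G_{m-1}\square K_2$. This prism is balanced because the reflection fixing point~$1$ swaps the colour classes when $m$ is even.

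Your pairing fallback points in this direction, but it comes with no construction, and the strengthened inductive hypothesis is never fixed. As written, the proposal does not establish the theorem.
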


For $m=2$ the flip graph is a single edge, and for $m=3$ it is $K_{2,3}$ (see Figure~\ref{fig:settings}~(c)), so in both cases there is a Hamilton path.

They also proved that the flip graph is bipartite (for all~$m$), and that it has hugely imbalanced partition classes for odd~$m$, leading to the following result.

\begin{theorem}[{\cite[Lem.~5.1+Thm.~5.2]{MR1939072}}]
\label{thm:GE-odd}
Let $m\geq 5$ be odd and $n\coloneq 2m$.
In the flip graph of non-crossing perfect matchings on $n$ points in convex position under 2-flips, any path misses at least $C_{(m+1)/2-1}-1=\Theta(2^m/m^{3/2})$ many vertices. 
In particular, there is no Hamilton path (nor cycle).
\end{theorem}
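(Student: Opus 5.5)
Since the theorem is attributed to \cite[Lem.~5.1+Thm.~5.2]{MR1939072}, the plan is to rebuild their argument from two ingredients: bipartiteness of the flip graph, and a lower bound on the imbalance of its partition classes when $m$ is odd. The key observation is that in any path in a bipartite graph the numbers of vertices taken from the two classes differ by at most one. So if the classes have sizes $a\ge b$, any path visits at most $2b+1$ vertices and therefore misses at least $a-b-1$ of them. Everything thus comes down to (i) exhibiting a proper 2-colouring and (ii) computing or bounding $a-b$.

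For (i), label the points $1,\ldots,n$ in convex order. In a non-crossing perfect matching every edge $\{i,j\}$ has $j-i$ odd, since the points strictly between $i$ and $j$ must be matched among themselves. Thus every edge joins an odd and an even point. I would assign to each matching a sign computed from the permutation sending the odd points to their partners; this is the same parity idea used in Section~\ref{sec:directed} and for $K_{n,n}$. A 2-flip replaces $\{a,b\},\{c,d\}$ by $\{a,c\},\{b,d\}$, or by the other non-crossing pairing. Viewed as a matching of the bipartite graph of odd versus even points, such a flip is exactly one transposition of partners, so it toggles the sign. Hence the sign is a proper 2-colouring. An alternative I would keep in reserve is the sign given by the number of edges of a given ``length class''.

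For (ii) the key tool is a sign-reversing involution. I want to evaluate $a-b$, the signed count $\sum_M \mathrm{sgn}(M)$. I would set up a recursion by decomposing according to the partner $2k$ of point $1$. This splits the matching into a non-crossing matching inside $[2,2k-1]$ and one inside $[2k+1,n]$, and the sign should factor as a product times a correction $(-1)^{\text{something in }k}$. Writing $D_m$ for the signed count, this gives a convolution recursion of the form $D_m=\sum_k \varepsilon_k D_{k-1}D_{m-k}$. I expect that the terms cancel in pairs when $m$ is even, forcing $D_m=0$ there; this is consistent with the Hamilton cycle of Theorem~\ref{thm:GE-even}. For odd $m$ the recursion should collapse to $|D_m|=C_{(m-1)/2}$, the Catalan number of half the size. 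Equivalently, one can find an explicit involution, such as reflecting the innermost region or swapping a distinguished pair of nested edges, whose fixed points are the matchings symmetric in a suitable sense. These fixed points are counted by $C_{(m+1)/2-1}$ and all lie in one colour class. Subtracting one for the path endpoint then gives the stated bound $C_{(m+1)/2-1}-1$. Its asymptotics $\Theta(2^m/m^{3/2})$ follow from $C_k=\Theta(4^k/k^{3/2})$ with $k\approx m/2$. Non-existence of a Hamilton path and cycle is then immediate once $m\ge5$, as the bound is positive.

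The main obstacle is (ii): getting the sign correction in the convolution exactly right, or finding the involution whose fixed-point set is precisely Catalan of half size. I would first compute $D_m$ for $m\le 5$ by hand, using $D_3=C_1$ from the $K_{2,3}$ case, to pin down the signs before proving the recursion in general.
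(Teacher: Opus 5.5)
Your proposal is correct and follows the route the paper relies on (it only cites Hernando, Hurtado and Noy): a proper 2-colouring of the flip graph, a class-size imbalance of $C_{(m-1)/2}$ for odd $m$, and hence that any path misses at least the imbalance minus one. The step you left open does go through: decomposing by the partner $2k$ of point~$1$, your permutation restricted to $[k]$ is (the inverse of) the inner permutation composed with a $k$-cycle, so $\varepsilon_k=(-1)^{k-1}$; then the terms $k$ and $m+1-k$ cancel for even $m$, and for $m=2p+1$ the numbers $(-1)^pD_{2p+1}$ satisfy the Catalan recursion, giving $|D_m|=C_{(m-1)/2}=C_{(m+1)/2-1}$.
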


\subsection{Non-crossing almost-perfect matchings}

We consider $n$~points in convex position, labeled $1,\ldots,n$ in clockwise order.
For some integer~$m\geq 1$, we define $n\coloneq 2m+1$, and we let~$\cM_m$ be the set of non-crossing almost-perfect matchings on those $n$ points, leaving exactly one of the points unmatched.
Furthermore, we define $n'\coloneq n-1=2m$ and let~$\cP_m$ be the set of non-crossing perfect matchings on $n'$ points in convex position.
A matching edge is \defi{visible} from some point if the triangle spanned by the point and the two edge endpoints does not intersect any other matching edges.
The \defi{length} of a matching edge~$e=\{i,j\}$ is defined as the minimum number of points on either of its two sides, plus~1.
For example, the length of the edge~$e=\{2,5\}$ in a matching~$M$ is $\ell(e)=3$ if $M\in\cM_4$, and it is $\ell(e)=2$ if $M\in\cM_2$.
We sometimes refer to an edge of length~$\ell$ as an \defi{$\ell$-edge}.

We start by counting these matchings.

\begin{lemma}
We have $|\cM_m|=n |\cP_m|=n C_m$.
\end{lemma}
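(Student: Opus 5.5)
Our plan is to establish a bijection between $\cM_m\times\{\ast\}$ and $[n]\times\cP_m$, or more precisely to show that for every fixed point $k\in[n]$, the set of matchings in $\cM_m$ leaving $k$ unmatched is in bijection with $\cP_m$. Summing over the $n$ choices of the unmatched point then gives $|\cM_m|=n|\cP_m|$. The equality $|\cP_m|=C_m$ is the classical fact, recalled in the introduction, that non-crossing perfect matchings on $2m$ points in convex position form a Catalan family.

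For the bijection, fix the unmatched point $k$ and delete it from the convex point set. The remaining $n-1=2m$ points are still in convex position, and they inherit the cyclic (clockwise) order. The first thing to check is that non-crossingness of a straight-line matching on points in convex position depends only on the cyclic order of the endpoints. Two chords $\{a,b\}$ and $\{c,d\}$ cross if and only if their endpoints interleave along the circle. Interleaving is unaffected by removing a point that is not an endpoint of either chord.

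Hence a set of $m$ disjoint chords on $[n]\setminus\{k\}$ is non-crossing in the original configuration exactly when it is non-crossing as a matching on the $2m$ remaining points. Relabeling these points $1,\ldots,2m$ in clockwise order starting after $k$ identifies the matchings of $\cM_m$ with unmatched point $k$ with $\cP_m$. This map is clearly invertible: reinsert $k$ and leave it unmatched.

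The only step requiring any care is the crossing-criterion argument, namely that crossing of straight segments between points in convex position is equivalent to cyclic interleaving. This is standard, since the convex hull ordering makes the chord $\{a,b\}$ separate the points into the two arcs between $a$ and $b$. The rest is immediate. Alternatively, one can appeal to the fact that $|\cP_m|$ depends only on $m$ and not on the specific convex configuration. We would also remark that the statement has no parity restriction, since the $n=2m+1$ choices for the unmatched point are all symmetric under rotation.
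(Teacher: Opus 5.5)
Your proposal is correct and is essentially the paper's argument. The paper inserts an unmatched point into each non-crossing perfect matching and rotates the result to all $n$ positions, which is just the inverse of your decomposition by unmatched point followed by deletion; your cyclic-interleaving justification makes explicit what the paper leaves implicit.
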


\begin{proof}
It is well-known that non-crossing perfect matchings with $m$ edges are counted by the Catalan numbers~$C_m$.
To each such perfect matching we can insert one additional unmatched $n$th point in a fixed position, and rotate the resulting matching in all $n$ possible ways.
\end{proof}

Given a matching~$M\in\cM_m$, its degree in the flip graph is twice the number of matching edges that are visible from the unmatched point.
In particular, if the unmatched point is~$i$ and the edge~$\{i-1,i+1\}$ is present in~$M$, then $M$ has degree~2 in the flip graph.
One can check that the number of such matchings is~$n C_{m-1}$.
Consequently, the fraction of degree-2 vertices among all vertices is~$C_{m-1}/C_m=1/4(1+o(1))$, i.e., converging to~$1/4$ as $m$ grows.
This imposes severe constraints on finding a Hamilton path or cycle.

\begin{figure}[b!]
\includegraphics[page=1,scale=0.7]{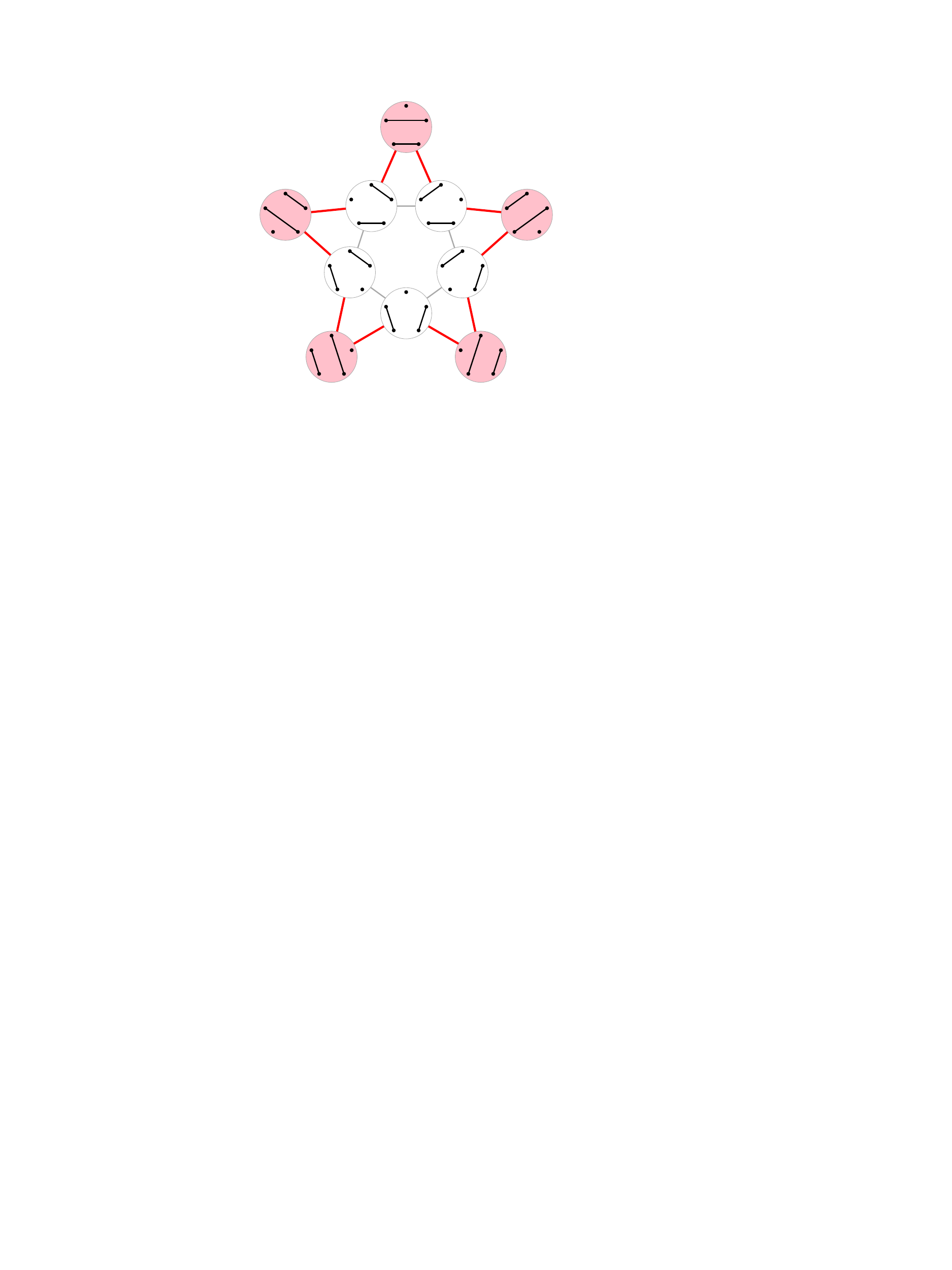}
\caption{Flip graph of non-crossing matchings on 5 points ($m=2$ edges), with degree-2 vertices and their incident edges highlighted.}
\label{fig:G5geom}
\end{figure}

In particular, it was observed by Aichholzer, Dorfer, Rieck and Verciani~\cite{booklet-problem} that there is a set of $n$ degree-2 vertices in the flip graph whose incident edges link up to a cycle of length~$2n$; see Figures~\ref{fig:G5geom} and~\ref{fig:G7geom}.
Specifically, these are matchings containing a single 2-edge and all other edges have length~1 (i.e., they lie on the boundary).
For $m=2$ this cycle of length~$2n$ is a Hamilton cycle, but for larger~$m$ it is not, so a Hamilton cycle in the flip graph is ruled out for~$m\geq 3$.

In fact, we prove the following much stronger result.

\begin{theorem}
\label{thm:GO-miss}
For integers $a\geq 5$ and $0\leq b\leq 4$, let $m\coloneq 9+5a+b$ and $n\coloneq 2m+1$.
In the flip graph of non-crossing almost-perfect matchings on $n$ points in convex position under 1-flips, any path misses at least $2^{m/5}/16=\Theta(2^{m/5})$ many vertices.
In particular, there is no Hamilton path (nor cycle).
\end{theorem}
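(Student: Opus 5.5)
The plan is to exploit degree-2 vertices, as the paragraph before the statement suggests. The basic obstruction is this: if a vertex $v$ has degree~2 with neighbors $x,y$, then any path containing $v$ as an interior vertex must use both edges $xv$ and $vy$. Hence a neighbor $x$ that is adjacent to three or more degree-2 vertices cannot have all of them as interior path vertices, since $x$ has only two path edges. More generally, consider a set $S$ of degree-2 vertices and let $N(S)$ be their neighborhood. Count path edges incident to $N(S)$: each interior vertex of $S$ on the path forces two edges into $N(S)$, while each vertex of $N(S)$ carries at most two path edges. This gives
\[
2\,|S\cap P|-O(1)\le 2|N(S)|,
\]
where the $O(1)$ accounts for the endpoints of the path $P$. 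So the number of vertices of $S$ missed by $P$ is at least $|S|-|N(S)|-O(1)$. The aim is therefore to build, inside the flip graph, many disjoint local gadgets in which the degree-2 vertices outnumber their neighbors.

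Step one is to design a concrete gadget, a configuration of a few edges on a small arc of the convex polygon. Its local states are matchings in which the unmatched point lies inside the gadget and sees only a short edge $\{i-1,i+1\}$. Within the gadget, the degree-2 states should have fewer common neighbors than there are such states; the cycle of length $2n$ from~\cite{booklet-problem} is the model, but it needs to be localized. I would look for a block of about $5$ points (consistent with the $2^{m/5}$ and the decomposition $m=9+5a+b$) where, for a fixed outside configuration, the local degree-2 matchings and their neighbors form a subgraph with strictly more degree-2 vertices than neighbors. The rest of the matching is frozen, since the unmatched point is inside the block and only visible edges can flip. The neighbors of such a degree-2 vertex are also matchings with the unmatched point in the block, so the whole local structure stays inside the block.

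Step two is to make these gadgets independent and numerous. I would fix a skeleton, for instance a chain of $a$ blocks of $5$ points each plus a constant-size remainder (the $9+b$). Each block is chosen among two non-crossing patterns, except the one hosting the unmatched point. This yields roughly $a\cdot 2^{a-1}\ge 2^{m/5}/16$ distinct gadget instances. Distinct choices of the frozen blocks give disjoint vertex sets $S$ and $N(S)$, because the frozen edges are determined by the vertex. Each instance contributes a deficit of at least one after accounting for the path endpoints. The endpoints can only spoil $O(1)$ instances, so summing gives the bound. The constant $16$ and the condition $a\ge5$ absorb the endpoint losses and the rounding from $b$.

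The main obstacle is step one: finding a gadget whose degree-2 vertices genuinely exceed their neighborhood, while guaranteeing that neighbors do not leak into other gadgets' neighborhoods. The leaking would double count capacity. I would first test small explicit configurations by hand, using the $m=2$ picture (Figure~\ref{fig:G5geom}) and the $m=3$ case (Figure~\ref{fig:G7geom}) as guides. I would then verify disjointness by showing that every vertex in $N(S)$ still determines the frozen blocks uniquely.
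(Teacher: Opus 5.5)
\textbf{There is a genuine gap, and it is in your step one.} The gadget you are looking for does not exist. Every set $S$ of degree-2 vertices in this flip graph satisfies $|S|\le|N(S)|$, so your inequality can never give more than an additive constant.

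To see this, let $x$ have unmatched point $j$. Suppose its neighbor $u(x,k)$, which replaces $\{k,\ell\}$ by $\{j,\ell\}$, has degree~2, i.e., contains the edge $\{k-1,k+1\}$. This edge cannot be an old edge of $x$: in convex position it would cross $\{k,\ell\}$, because $\ell\neq k\pm1$. Hence it equals $\{j,\ell\}$, which forces $\{k,\ell\}=\{j+1,j+2\}$ or $\{k,\ell\}=\{j-2,j-1\}$. So every vertex has at most two degree-2 neighbors. For $n\ge5$, degree-2 vertices are pairwise non-adjacent, and each has exactly two neighbors. Double counting the edges between $S$ and $N(S)$ therefore gives $2|S|\le 2|N(S)|$.

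Your own model, the $2n$-cycle, already has $|S|=|N(S)|=n$. It is an obstruction not because degree-2 vertices are in surplus, but because the forced edges close up into a cycle, and a path cannot contain a cycle.

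This closing-up also cannot be localized to a block of $5$ points. A degree-2 vertex $v$ with unmatched point $i$ has neighbors $u(v,i+1)=f(v)$ and $u(v,i-1)=f^{-1}(v)$. So every forced edge is an edge of the cycle factor $\cC_m$. Along a chain of forced edges the unmatched point advances by one position at a time in a fixed direction. Any closed chain of forced edges must therefore wrap around the whole polygon.

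\textbf{The missing idea is to force an entire factor cycle into the path, rather than to count.} The paper does this with the cycles $C(M_\alpha)$, where $M_\alpha$ consists of 1-edges and twins $\{i,i+3\},\{i+1,i+2\}$. Each vertex on $C(M_\alpha)$ falls into one of three cases:
\begin{itemize}
\item it has degree~2, so both its cycle edges are forced;
\item it has degree~4 with a 4-edge $\{i-1,i+3\}$;
\item it sits between vertices of the first two kinds.
\end{itemize}

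Take a degree-4 vertex $M^0$. Its two non-cycle edges are both excluded:
\begin{itemize}
\item The edge to $N^0=u(M^0,i+3)$ cannot be used. $N^0$ lies on the cycle $C(M_\alpha')$ of a twin-removed matching, where both of its cycle neighbors have degree~2, so both path edges at $N^0$ are already forced.
\item The edge to $M^2=f^2(M^0)$ cannot be used, because it would close a triangle with the forced degree-2 vertex $M^1=f(M^0)$.
\end{itemize}
Hence a path whose endpoints lie off these cycles, and which contains all their vertices, would contain all of $C(M_\alpha)$, which is impossible.

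Finally, the $2^a$ bitstrings $\alpha$ give gadgets that are pairwise disjoint, except that neighboring $\alpha$'s may share twin-removed cycles. After discounting the two endpoints, this yields the bound $(2^a-4)/2\ge 2^{m/5}/16$.
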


\begin{figure}[t!]
\includegraphics[page=3,scale=0.7]{G57geom}
\caption{Flip graph of non-crossing matchings on 7 points ($m=3$ edges), with degree-2 vertices and their incident edges highlighted.}
\label{fig:G7geom}
\end{figure}

Note that the previous theorem starts to kick in only for relatively large values of~$m$, and probably the proof could be optimized to decrease those a little bit.
In fact, computer experiments show that a Hamilton path in the flip graph exists for $m=3,4,5,6,7$, but not for $m=8$.

Complementing Theorem~\ref{thm:GO-miss}, we can prove the existence of long cycles in the flip graph, namely cycles that visit almost all vertices, i.e., a $(1-o(1))$-fraction as $n$ tends to infinity.

\begin{theorem}
\label{thm:GO-long}
Let $m\geq 1$ and $n\coloneq 2m+1$.
The flip graph of non-crossing almost-perfect matchings on~$n$ points in convex position under 1-flips has a cycle that visits at least a $(1-14/n)$-fraction of all vertices.
\end{theorem}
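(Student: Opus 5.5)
We construct the cycle explicitly by exploiting the rotational structure of $\cM_m$. By the preceding lemma, $|\cM_m|=nC_m$, and every matching in $\cM_m$ is obtained from a non-crossing perfect matching $P\in\cP_m$ by inserting the unmatched point at some position and rotating. Concretely, for $P\in\cP_m$ on points $1,\ldots,2m$ and a gap $g\in\{0,\ldots,2m\}$, inserting the unmatched point into gap~$g$ gives an element of $\cM_m$. The idea is to take a Hamilton-type path in a flip graph on $\cP_m$, lift it to $\cM_m$, and let the unmatched point ``sweep'' around the circle.

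The basic gadget is the following observation. Suppose the unmatched point~$j$ lies next to a boundary edge $\{j+1,j+2\}$. Unmatching $j+2$ moves the unmatched point two steps clockwise, and this operation preserves the underlying perfect matching up to relabeling. So, for fixed~$P$, the unmatched point can travel around the circle through all gaps that lie between boundary edges. In particular, the set $R(P)$ of rotations and insertions of a single $P$ decomposes into short paths along which the unmatched point jumps over 1-edges. To switch from $P$ to a neighbor $P'$ in the perfect-matching 2-flip graph, I will use a different mechanism: when the unmatched point $j$ sees an edge $\{k,\ell\}$ and the 2-flip is realizable via two consecutive 1-flips through~$j$, we pass through an intermediate almost-perfect matching. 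The plan is:

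\begin{enumerate}
\item Choose a long cycle in an auxiliary graph whose vertices are classes of $\cM_m$ under some grouping, for instance by the underlying perfect matching after deleting the unmatched point and contracting.
\item Within each class, traverse all its members by moving the unmatched point.
\item Join consecutive classes by a single 1-flip.
\end{enumerate}

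This mirrors the $\cX/\cY/\cZ$ interleaving in the proof of Theorem~\ref{thm:CO1-hconn}. There, a path through one class is ``thickened'' by inserting short detours ($M_j^*$ alternately reversed), and the parity trick (odd length) lets the path end on the correct side. Here, I will run an induction on~$m$ rather than on perfect matchings. Partition $\cM_m$ by the edge incident to a fixed point, or by the unmatched point being that point. Each part is a product of two smaller convex instances, on the two sides of the fixed edge, and exactly one side is odd. I will combine a long cycle on the odd side (by induction) with a boustrophedon traversal of the even side.

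The main obstacle is handling the even side. The even side is a non-crossing perfect matching instance, which we cannot reconfigure freely with 1-flips alone; it only changes when the unmatched point is on that side. Consequently, the product structure is not a true Cartesian product. To overcome this, I would shrink the fixed edge to a boundary edge (length~1) as often as possible, so that one side is empty, and accept losing the parts where the fixed edge is long and the even side has few reachable states. I will charge the losses only to configurations where the unmatched point is enclosed by a short edge, like the degree-2 vertices. Counting the discarded vertices against $nC_m$ via the ratio $C_{m-1}/C_m\approx 1/4$ and a constant number of exceptional positions per rotation class should give a loss of $O(|\cM_m|/n)$, which is where the constant~$14$ enters.

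Finally, the rotation $i\mapsto i+1$ will close the path into a cycle, as in the $m=3$ base case of Theorem~\ref{thm:CO1-hconn}. Concretely, I build a path through representatives of all but $O(C_m)$ rotation classes that ends at a rotated copy of its start, and then concatenate the $n$ rotated copies. This closing step is the part I expect to need the most care.
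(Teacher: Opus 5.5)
Your proposal is a list of candidate strategies rather than a proof, and the steps it does commit to fail.

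\textbf{The gadget loses vertices.} Unmatching $j+2$ across a 1-edge $\{j+1,j+2\}$ is exactly the chord $M\mapsto f^2(M)$, where $f(M)=u(M,j+1)$. It skips the matching $f(M)$. That matching has the 2-edge $\{j,j+2\}$ around the unmatched point $j+1$, so it has degree~2. Hence any long cycle that uses your gadget loses that vertex, and such vertices make up about a quarter of $\cM_m$.

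\textbf{The induction on the edge at a fixed point has no product structure.} Fix the edge $\{1,k\}$. The unmatched point lies on the odd side. A flip touching the even side would create an edge crossing $\{1,k\}$, and unmatching $1$ or $k$ leaves the part. So each part induces a \emph{disjoint union} of copies of the smaller odd flip graph, one for each perfect matching of the even side. The part where point~1 is unmatched is even an independent set. There is therefore no ``boustrophedon'' over the even side.

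\textbf{The losses are a constant fraction, not $O(1/n)$.} You cannot discard the parts with a long edge at point~1: a fraction $(m-1)/(2m+1)\approx 1/2$ of $\cM_m$ has point~1 on an edge of length $\geq 2$. Charging losses to degree-2-type configurations also costs a constant fraction, which is what $C_{m-1}/C_m\approx 1/4$ measures. To reach $1-14/n$ you may lose only $O(1)$ vertices per rotation class (each of size $n$). Nothing in the plan achieves this, so the constant~14 is not derived.

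\textbf{The closing step only restates the problem.} There are exactly $C_m$ rotation classes, so ``all but $O(C_m)$'' is vacuous. You would need a closed walk in the quotient by rotation that misses only $O(C_m/n)$ classes and whose net rotation is coprime to~$n$. No construction is given.

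The idea you are missing is that the unmatched point can always move by a \emph{single} step. With $i$ unmatched, $f(M)=u(M,i+1)$ is always a legal non-crossing flip and leaves $p(M)$ unchanged up to rotation. Iterating $f$ therefore gives a cycle factor $\cC_m$. Each cycle $C(P)$ consists of \emph{all} insertions of an unmatched point into all rotations of $P\in\cP_m$, has length $n\ord(P)$, and corresponds to a plane tree.

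The paper then glues these cycles together in three steps:
\begin{enumerate}
\item Since the flip graph has no 4-cycles, it uses gluing cycles $D(M,e)$ of length $2d+2$, built from an ear $e=(i,d)$ and the ear removal $M-e$; each gluing loses $2d-2$ vertices.
\item The gluings follow a spanning tree on $\cP_m/{\sim}$ defined by $\tau$ (ear removal, or creating a 3-ear next to an open or half-open ear). Acyclicity follows from the lexicographic potential $\varphi$.
\item The gluing cycles are made edge-disjoint by rotating them, which shifts their footprints on $C(M)$ by multiples of $\ord(M)$.
\end{enumerate}
All footprints on $C(M)$ lie in a window of length $2\ord(M)$, and at most 7 of them overlap on any edge (Lemma~\ref{lem:intervals}). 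Hence at most $14\ord(M)$ of the $n\ord(M)$ vertices of each cycle are lost, which is exactly the $14/n$ bound.
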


As mentioned before, Theorem~\ref{thm:GO-long} does not contradict Theorem~\ref{thm:GO-miss}, because the fraction of missed vertices in Theorem~\ref{thm:GO-miss} is only~$o(1)$, in fact exponentially decreasing, whereas $14/n$ is polynomially decreasing.

The proof of Theorem~\ref{thm:GO-long} follows a 2-step strategy that has been used successfully in several previous papers.
Specifically, we first build a \defi{cycle factor}, i.e., a collection of disjoint cycles that together visit all vertices in the flip graph.
We then glue these cycles together to a single cycle.
In the gluing step, we lose a few vertices that do not get included into the final cycle.

In the remainder of this section, we prove Theorems~\ref{thm:GO-miss} and~\ref{thm:GO-long}.
We start with the proof of Theorem~\ref{thm:GO-long}, and will reuse some of the terminology also in the proof of Theorem~\ref{thm:GO-miss} presented afterwards.

\subsubsection{More notation and terminology on non-crossing matchings}

Given a matching~$M\in\cM_m$ or~$M\in\cP_m$, we write $r(M)$ for the matching obtained from~$M$ by rotating it one step in counterclockwise direction.
For a perfect matching~$M\in\cP_m$, we write~$\ord(M)$ for the smallest integer~$i>0$ such that $r^i(M)=M$.
Note that $\ord(M)\geq 2$ and that $\ord(M)$ divides~$n'$.

We write $M^1\in\cP_m$ for the matching that has all edges of length~1, namely the edges~$\{1,2\},\allowbreak\{3,4\},\ldots,\{n'-1,n'\}$.
Note that its order is $\ord(M^1)=2$.
The two matchings in~$[M^1]$ are the only ones whose order is~2, whereas all other matchings have order at least~3.

For two matchings~$M,M'\in\cM_m$, we write $M\sim M'$ if $M'=r^i(M)$ for some integer~$i\geq 0$.
This defines an equivalence relation on the set~$\cM_m$, and we write $[M]\coloneq \{M'\in\cM_m\mid M'\sim M\}=\{r^i(M)\mid i\geq 0\}$ for the equivalence class of~$M$ under rotation, and $\cM_m/{\sim}\coloneq \{[M]\mid M\in\cM_m\}$ for the set of equivalence classes.
The same notations are also defined analogously for perfect matchings~$\cP_m$.
For any matching $M\in\cM_m$ each equivalence class has size $|[M]|=n$, whereas for any perfect matching $M\in\cP_m$ the size is $|[M]|=\ord(M)$.

Given a matching~$M\in\cM_m$ with unmatched point~$i\in[n]$, we write $p(M)\in\cP_m$ for the perfect matching obtained by removing the unmatched point, decreasing the labels of all points $i+1,\ldots,n$ by~1 to become $i,\ldots,n-1$.
Conversely, given a perfect matching~$M\in\cP_m$, we write $c_i(M)\in\cM_m$, $i=1,\ldots,n$, for the matching obtained by increasing the labels of all points~$i,\ldots,n-1$ by~1 to become $i+1,\ldots,n$ and then inserting an unmatched point~$i$.
Note that if the point~$i$ is unmatched in~$M\in\cM_m$, then $c_i(p(M))=M$.
Also note that $c_n(M)=r(c_1(M))$ for all $M\in\cP_m$.

A \defi{plane tree} is a tree in which every vertex has a circular ordering of its neighbors.
We think of it as a tree embedded in the plane where the circular ordering is given by the ordering of neighbors around every vertex, in clockwise direction, say.
We write~$\cT_m$ for the set of plane trees with $m$ edges.
The number plane trees with $m\geq 1$ edges is $1,1,2,3,6,14,34,95,280,854,\ldots$ (OEIS A002995), which satisfies $|\cT_m|=(1+o(1))4^m/(\sqrt{\pi}m^{5/2})$.
Plane trees with $m$ edges are in one-to-one correspondence to equivalence classes of non-crossing perfect matchings with $m$ edges, i.e., we have $|\cT_m|=|\cP_m/{\sim}|$.
Specifically, for any equivalence class $[M]\in\cP_m/{\sim}$, the corresponding plane tree is the dual graph of (any rotation of) the matching~$M$, without a dual vertex for the outer face; see Figure~\ref{fig:cfac}.

\subsubsection{Cycle factor construction}
\label{sec:cfac}

Given a matching~$M\in\cM_m$ in which point~$i\in[n]$ is unmatched, we write $f(M)\in\cM_m$ for the matching obtained from~$M$ by unmatching the next point~$i+1$ (in clockwise direction), i.e., $f(M)\coloneq u(M,i+1)$.
Formally, if $j$ is the neighbor of~$i+1$ in~$M$, we replace the edge $\{i+1,j\}$ by~$\{i,j\}$.
This rule is clearly invertible, and we thus obtain a collection of disjoint cycles that together visit all vertices of the flip graph; see Figure~\ref{fig:cfac}.
Specifically, the cyclic sequence containing the matching~$M$ is defined as
\begin{subequations}
\label{eq:cfac}
\begin{equation}
C(M)\coloneq (f^i(M))_{i\geq 0},
\end{equation}
and the cycle factor as
\begin{equation}
\cC_m\coloneq \{C(M)\mid M\in\cM_m\}.
\end{equation}
\end{subequations}

\begin{figure}[t!]
\includegraphics[page=1,scale=0.65]{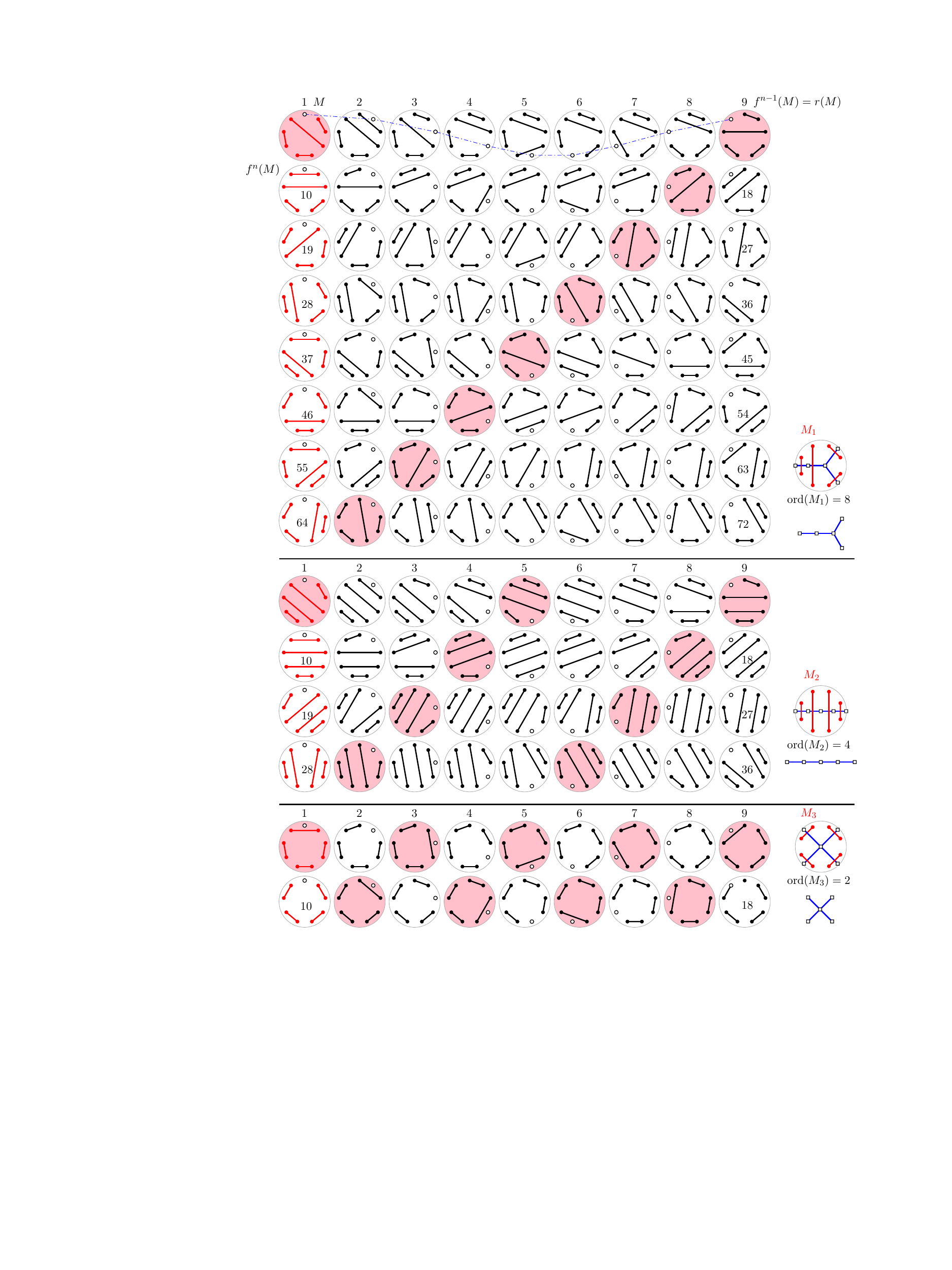}
\caption{Construction of the cycle factor~$\cC_4$, which has three cycles of lengths 72, 36, 27, corresponding to the three plane trees with 4~edges shown on the right (of order~8, 4 and 2, respectively).
The 9 cyclic rotations of the first matching in each cycle are highlighted in red.
In column~$i=1,\ldots,9$, the unmatched point is~$i$, and going down corresponds to rotating the matching on the remaining $n-1$ points in counterclockwise direction.
}
\label{fig:cfac}
\end{figure}

Clearly, if point~$i\in[n]$ is unmatched in~$M$, then in~$f^j(M)$ the point~$i+j$ is unmatched (all indices are considered modulo~$n$).
In particular, in $M'\coloneq f^n(M)$, the same point~$i$ is unmatched again.
Furthermore, the matching~$M'$ differs from~$M$ by rotating the matching on the remaining $n-1$ points by one step in counterclockwise direction (ignoring the unmatched point~$i$), i.e., we have $p(M')=r(p(M))$.
Consequently, we have $f^{n-1}(M)=r(M)$.
It follows that $[M]\subseteq C(M)$, and that the cycles of~$\cC_m$ are in one-to-one correspondence to equivalence classes of perfect matchings from~$\cP_m$ under rotation.

\begin{lemma}
\label{lem:cfac}
For any matching~$M\in\cM_m$, the cycle $C(M)$ contains exactly all matchings obtained by rotating~$p(M)$ arbitrarily and inserting one additional unmatched point.
Consequently, the cycle~$C(M)$ has length~$n \ord(p(M))$.
The cycles in the factor~$\cC_m$ are in bijection with equivalence classes of perfect matchings in~$\cP_m$ under rotation, or equivalently, with plane trees in~$\cT_m$, i.e., we have $|\cC_m|=|\cT_m|$.
\end{lemma}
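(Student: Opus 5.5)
The plan is to make precise the observation already made before the statement: along the cycle $C(M)$, the walk advances the unmatched point by one step clockwise each time, and after every $n-1$ steps the underlying perfect matching is rotated. Concretely, I would prove by induction on~$j$ the formula
\[
f^j(M)=c_{i+j}\bigl(r^{\lfloor j/(n-1)\rfloor}(\text{shift}_j(p(M)))\bigr),
\]
but it is cleaner to avoid explicit index bookkeeping and argue structurally. Let $i$ be the unmatched point of~$M$. The matching $f(M)$ has unmatched point~$i+1$. Deleting the unmatched point from~$M$ or from~$f(M)$ gives the same cyclic arrangement of $n-1$ points with the same chords, once we identify point~$i+1$ of~$M$ with point~$i$ of~$f(M)$: the flip just relabels which of the two adjacent positions $i,i+1$ is the ``hole''. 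So, viewed as a cyclically ordered perfect matching on $n-1$ points together with a marked gap between two consecutive points where the hole sits, a single application of~$f$ keeps the perfect matching fixed and moves the gap forward past one point. I will make this precise using the maps $p$ and~$c_i$. Either $p(f(M))=p(M)$ exactly (when $i+1\neq 1$ in the relabeling), or it equals $r(p(M))$ or $r^{-1}(p(M))$ at the wrap-around. The only change in $p$ comes from the shift of labels $i+1,\ldots,n$ by one.

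From this I get two facts. First, every matching in $C(M)$ has the form $c_k(r^s(p(M)))$. Second, $f^{n}(M)$ has the same hole~$i$ and $p(f^n(M))=r(p(M))$, hence $f^{n-1}(M)=r(M)$ as stated in the text. Iterating, $C(M)$ contains all $c_k(r^s(p(M)))$ for every $k\in[n]$ and $s\ge 0$. This gives the first claim: $C(M)$ consists exactly of all rotations of $p(M)$ with a hole inserted at any position. For the length, these matchings are distinct for distinct pairs $(k, r^s(p(M)))$. Indeed, $k$ is recovered as the unmatched point, and $c_k$ is injective because $p\circ c_k=\mathrm{id}$. The number of distinct $r^s(p(M))$ is $\ord(p(M))$. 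Hence the length is $n\ord(p(M))$. Since $f$ is a permutation of $\cM_m$, its orbits are disjoint cycles. This needs a small check, namely that distinct pairs are visited exactly once, which holds because $f$ is a bijection and the orbit set has been determined.

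Finally, two cycles $C(M)$ and $C(M')$ coincide if and only if $p(M')\in[p(M)]$, by the characterization just proved. So $M\mapsto[p(M)]$ induces a bijection $\cC_m\to\cP_m/{\sim}$. It is surjective because $p(c_1(P))=P$. Composing with the stated bijection $\cP_m/{\sim}\leftrightarrow\cT_m$ via dual plane trees gives $|\cC_m|=|\cT_m|$. As a sanity check, summing lengths gives $\sum_{[P]} n\ord(P)=n|\cP_m|=|\cM_m|$, consistent with the counting lemma. The main obstacle is the index bookkeeping in the first step, namely verifying that one $f$-step leaves the matching on the remaining points unchanged up to the relabeling, and that the wrap-around after $n-1$ or $n$ steps produces exactly one rotation~$r$ in the counterclockwise direction. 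I would verify this by checking the relabeling of the single moved edge $\{i+1,j\}\mapsto\{i,j\}$ against the definition of~$p$.
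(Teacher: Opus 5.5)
Your proposal is correct and is essentially the paper's argument, which the paper gives in the paragraph just before the lemma. There, $f$ advances the hole by one point, after $n$ steps the hole returns with $p(f^n(M))=r(p(M))$, and this yields the orbit description, the length $n\ord(p(M))$, and the bijection $C(M)\mapsto[p(M)]$ with $\cP_m/{\sim}$ and hence with $\cT_m$; your checks simply fill in bookkeeping the paper leaves implicit, namely that $p(f(M))=p(M)$ whenever the hole is not at $n$, and that the single wrap-around step (hole moving from $n$ to $1$, which shifts all labels down by one) applies exactly $r$ rather than $r^{-1}$.
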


Based on this lemma we may also define $C(p(M))\coloneq C(M)$ for all~$M\in\cM_m$.

\subsubsection{Gluing cycles together}

An \defi{alternating cycle} in the flip graph is a cycle that alternately uses one edge on a cycle of the factor~$\cC_m$ and one edge between two of these cycles.
Thus, taking the symmetric difference of the edge sets of the alternating cycle and the cycles of the factor glues them together to a single cycle.
A shortest possible alternating cycle has length~4, i.e., a 4-cycle that has two opposite edges on the cycles and the other two edges between the cycles.
Unfortunately, the flip graph of non-crossing matchings does not have any 4-cycles.
Instead, we use longer cycles for gluing pairs of cycles from the factor together, at the cost of losing some vertices from the cycles in the process.
Figure~\ref{fig:gluing} illustrates this operation schematically (see also Figure~\ref{fig:ear}~(d)).

\begin{figure}[h!]
\includegraphics{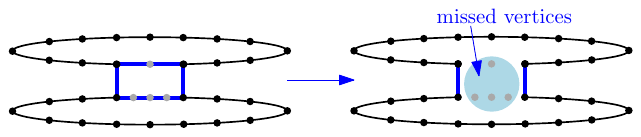}
\caption{Illustration of the gluing step.}
\label{fig:gluing}
\end{figure}

An \defi{ear} of a perfect matching~$M\in\cP_m$ is an ordered pair~$(i,d)$ with $1\leq i\leq n'$ and $3\leq d\leq m$, such that $\{i,j\}$ for $j\coloneq i+d$ (modulo~$n'$) is an edge of~$M$ of length $\ell(\{i,j\})=d$, and all other edges of~$M$ on the points~$i+1,i+2,\ldots,j-1$ have length~1; see Figure~\ref{fig:ear}~(a).
We refer to $d$ as the \defi{length} of the ear~$(i,d)$.
Furthermore, we refer to the edge $\{i,j\}$ of~$M$ as an \defi{ear edge}, and we write $E(M)$ for the set of all ear edges in~$M$.

Note that every 3-edge~$\{i,i+3\}$ of a matching is an ear edge, as the two points~$i+1,i+2$ admit only one edge between them, namely the 1-edge~$\{i+1,i+2\}$.
Furthermore, observe that every matching~$M\in\cP_m\setminus[M^1]$ has at least one ear edge, consider for example any shortest edge of length~$>1$.

Given a perfect matching~$M\in\cP_m$ with an ear~$e=(i,d)$, we write $M-e\in\cP_m$ for the matching obtained from~$M$ by rotating the sub-matching on the points~$i,i+1,\ldots,j$, where $j\coloneq i+d$, by one step, i.e., in~$M-e$ all edges on the points~$i,i+1,\ldots,j$ have length~1; see Figure~\ref{fig:ear}~(b).
We refer to this operation as \defi{ear removal}.

\begin{figure}[t!]
\includegraphics{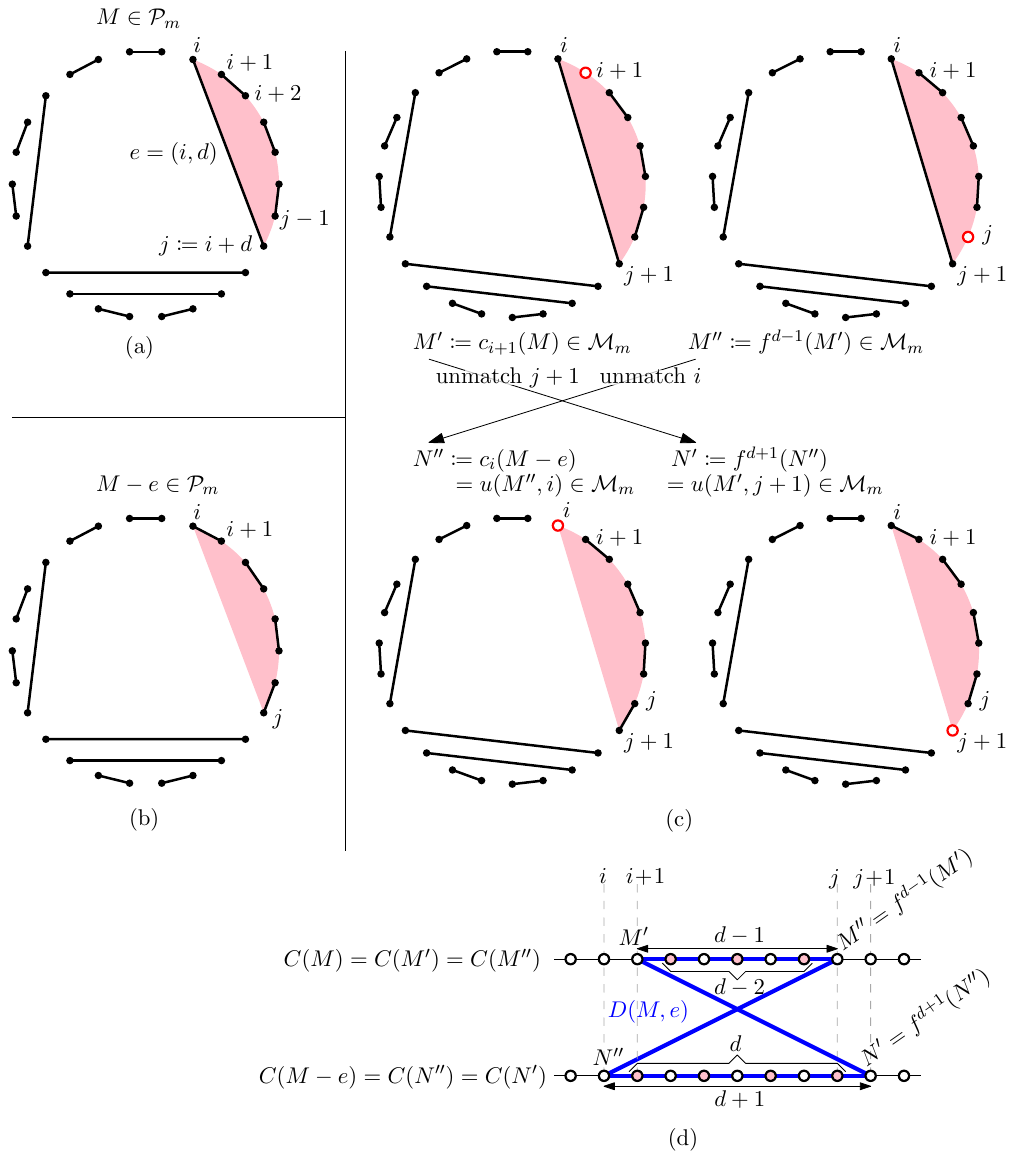}
\caption{Illustration of (a) an ear $e=(i,d)$; (b) of the ear removal operation~$M-e$; (c)+(d) the corresponding gluing cycle~$D(M,e)$.
}
\label{fig:ear}
\end{figure}

Given a matching~$M\in\cP_m$ and an ear~$e=(i,d)$ of~$M$, consider the matchings~$M'\coloneq c_{i+1}(M)\in\cM_m$ and~$M''\coloneq f^{d-1}(M')$; see Figure~\ref{fig:ear}~(c).
These matchings lie in distance~$d-1$ on the cycle~$C(M)=C(M')=C(M'')$.
Now consider the matchings~$N''\coloneq c_i(M-e)\in\cM_m$ and~$N'\coloneq f^{d+1}(N'')\in\cM_m$, which are in distance~$d+1$ on the cycle~$C(M-e)=C(N'')=C(N')$.
Note that $N''$ and~$N'$ can also be obtained from~$M''$ and~$N'$, respectively, by unmatching the point~$i$ or~$j+1$.
Consequently, there is a cycle~$D(M,e)$ containing the edges between~$M'$ and~$M''$ on $C(M')$, the edges between~$N''$ and~$N'$ on~$C(N'')$, plus the two edges~$\{M',N'\},\{M'',N''\}$.
We refer to~$D(M,e)$ as a \defi{gluing cycle}.
The length of the cycle~$D(M,e)$ is~$(d-1)+(d+1)+2=2d+2$.
The symmetric difference of the edge sets of the cycles~$C(M')$ and~$C(N'')$ with the cycle~$D(M,e)$ yields one cycle plus two paths with $d-2$ and~$d$ vertices, i.e., the total number of vertices on one of the original cycles but not on the resulting cycle is~$(d-2)+d=2d-2$.

\subsubsection{Connecting cycles along a spanning tree}

We say that an ear~$e=(i,d)$ of~$M$ of length~$d\geq 5$ is \defi{open}, if $\{i-4,i-3\},\{i-2,i-1\}\in M$ and $\{j+1,j+2\},\{j+3,j+4\}\in M$ where $j\coloneq i+d$, \defi{half-open} if exactly one of these two conditions holds, and \defi{closed} if neither of them holds.

Next, we choose a set $\cP_m'\subseteq \cP_m\setminus[M^1]$ of representatives, exactly one from each equivalence class of~$(\cP_m\setminus[M^1])/{\sim}$, arbitrarily.
We then define a mapping
\begin{subequations}
\label{eq:tau}
\begin{equation}
\tau:\cP_m'\rightarrow \cP_m,
\end{equation}
as follows.
For a perfect matching~$M\in\cP_m'$, we consider all ears of maximum length in~$M$.
If the maximum length is~3 or the maximum length is at least~5 and all such ears are closed, let $e=(i,d)$ be one of them, and define
\begin{equation}
\label{eq:tau1}
\tau(M)\coloneq M-e.
\end{equation}
If the maximum length is at least~5 and one of those ears is open or half-open, let $e=(i,d)$ be one of them, and proceed as follows:
If $\{i-4,i-3\},\{i-2,i-1\}\in M$, we define
\begin{equation}
\begin{aligned}
\label{eq:tau2}
\tau(M)&\coloneq M' \text{ for } M'\coloneq (M\setminus\{\{i-4,i-3\},\{i-2,i-1\}\})\cup\{\{i-4,i-1\},\{i-3,i-2\}\}.
\end{aligned}
\end{equation}
Note that $f\coloneq(i-4,3)$ is an ear of length~3 of~$M'$ and we have $M'-f=M$.
Similarly, if $\{j+1,j+2\},\{j+3,j+4\}\in M$ where $j\coloneq i+d$, we define
\begin{equation}
\begin{aligned}
\label{eq:tau3}
\tau(M)&\coloneq M' \text{ for } M'\coloneq (M\setminus\{\{j+1,j+2\},\{j+3,j+4\}\})\cup\{\{j+1,j+4\},\{j+2,j+3\}\}. \\
\end{aligned}
\end{equation}
\end{subequations}
Note that $f\coloneq (j+1,3)$ is an ear of length~3 of~$M'$ and we have $M'-f=M$.
For open ears both conditions are satisfied and then we define~$\tau(M)$ according to~\eqref{eq:tau2}.

\begin{lemma}
\label{lem:sptree}
The set of pairs $([M],[\tau(M)])$ for all $M\in\cP_m'$ defines a spanning tree on the set of equivalence classes~$\cP_m/{\sim}$.
\end{lemma}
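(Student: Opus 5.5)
The plan is to show that the directed graph on $\cP_m/{\sim}$ with arcs $[M]\to[\tau(M)]$, $M\in\cP_m'$, has no directed cycles. Every class other than $[M^1]$ has exactly one outgoing arc, and $[M^1]$ has none. An acyclic functional graph of this kind is an in-tree rooted at $[M^1]$, and its underlying undirected graph is then a spanning tree: it has $|\cP_m/{\sim}|-1$ edges and no cycles. Two points need care. First, well-definedness: the class $[\tau(M)]$ does not depend on the representative, because all constructions commute with the rotation~$r$. Second, $\tau(M)\not\sim M$; this follows from the potential function below, which strictly decreases.

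To rule out cycles I will define a potential $\Phi$ on matchings that is invariant under rotation and show $\Phi(\tau(M))<\Phi(M)$ lexicographically. A natural choice is the sorted (non-increasing) vector of edge lengths. In case~\eqref{eq:tau1}, ear removal replaces the ear edge of length $d\geq 3$ and the 1-edges inside it by 1-edges. The ear edge $\{i,j\}$ becomes $\{i,i+1\}$, and the inner points are re-paired as $\{i+2,i+3\},\ldots$, all of length~1. No other edge changes, so one entry $d\geq 3$ of the vector drops to~$1$ and the vector strictly decreases.

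Cases~\eqref{eq:tau2} and~\eqref{eq:tau3} instead create a new 3-edge, so the sorted length vector increases. Here I will use a finer potential that takes the ear structure into account. The plan is to compare $\Phi$ as the pair (maximum ear length, number of maximum-length ears that are open or half-open), or something similar, and to check the following. Applying~\eqref{eq:tau2} does not change the length-$d$ ear $e=(i,d)$ itself. It does destroy the two 1-edges $\{i-4,i-3\},\{i-2,i-1\}$, so $e$ becomes closed on that side. One must check that no new open or half-open ear of length $\geq d$ is created. The new ear $f=(i-4,3)$ has length $3<5$, and the only other edge lengths that change are those of edges containing $i-4,\ldots,i-1$. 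Hence $\Phi$ strictly decreases in a suitable lexicographic order. When all maximum ears are closed, case~\eqref{eq:tau1} removes one maximum ear, and I must ensure that this does not increase the maximum or create new open ears of maximal length. Ear removal only shortens edges locally, so the number of maximum-length ears drops. To make all of this consistent I will define the potential lexicographically as
(length vector of maximal ears, number of non-closed maximal ears),
and verify monotonicity in each case.

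The main obstacle is making the potential work uniformly across the three cases. Ear removal can turn previously non-ear edges into ears, and it can change the open/closed status of neighbouring ears, since the 1-edges at $i-4,\ldots,i-1$ or $j+1,\ldots,j+4$ may themselves lie inside other ears. If the lexicographic potential fails, my fallback is the following. Let $\Psi$ be the number of edges of length $\geq 5$ (counted with multiplicity of lengths as a sorted vector restricted to lengths $\geq 4$), and note two facts: cases~\eqref{eq:tau2} and~\eqref{eq:tau3} leave all edges of length $\geq 4$ untouched, while case~\eqref{eq:tau1} strictly decreases $\Psi$ or, when $d=3$, the count of 3-edges. A cycle would then have to consist only of steps of the form \eqref{eq:tau2}/\eqref{eq:tau3}, together with $d=3$ removals, with $\Psi$ constant. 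I will then argue that each \eqref{eq:tau2}/\eqref{eq:tau3} step strictly reduces the number of length-1 edges adjacent to a maximum ear, while $d=3$ removals cannot occur when a maximum ear of length $\geq 5$ exists, since ear length is then maximal. This yields a contradiction.
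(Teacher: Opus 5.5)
\textbf{Verdict: gap in your main potential, but your fallback can be completed into a valid proof.} Your reduction to acyclicity is the same as in the paper: out-degree one except at $[M^1]$, a strictly decreasing rotation-invariant potential, hence an in-tree and so a spanning tree.

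\textbf{The max-ear potential is not monotone.} Your claim that ``ear removal only shortens edges locally, so the number of maximum-length ears drops'' is false. Removing an ear can turn an enclosing non-ear edge into a longer ear. Take $n'=20$ and $M=\{\{1,10\},\{2,7\},\{3,4\},\{5,6\},\{8,9\},\{11,12\},\ldots,\{19,20\}\}$. Its only ear of length at least $3$ is $(2,5)$. This ear is closed, since $\{18,19\},\{20,1\},\{10,11\}\notin M$. So $\tau(M)=M-(2,5)$ by \eqref{eq:tau1}, and in $\tau(M)$ the edge $\{1,10\}$ is an ear of length $9$. The same happens for $d=3$: remove the $3$-ear $\{2,5\}$ from inside $\{1,8\}$, which also encloses $\{3,4\},\{6,7\}$. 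Any potential led by maximum ear length therefore increases. This is exactly why the paper's $\varphi=(n_1,\dots,n_5)$ puts first $n_1$, the number of non-ear edges of length $\geq5$, so that creating a new long ear counts as progress.

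\textbf{What the fallback still needs.} Your fallback is the right idea, and it finishes differently from the paper. As written, though, the final ``contradiction'' does not follow from the two facts you list. A $d=3$ removal can create a long ear, as in the example above, and thereby raise your quantity $Q$. The missing step is persistence.
\begin{itemize}
\item Steps \eqref{eq:tau2}/\eqref{eq:tau3} only change the $1$-edges on $i-4,\dots,i-1$ (resp.\ $j+1,\dots,j+4$). These points lie strictly inside no ear of length $\geq5$: an ear whose inner arc contains $i-1$ would have to contain $e$ or end at $i$. Hence these steps neither create nor destroy ears of length $\geq5$; in particular $e$ survives.
\item On a hypothetical cycle, the number of edges of length $\geq5$ is constant, so no removal with $d\geq5$ occurs. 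Once some matching on the cycle has a long ear, every later one does. So either every step is \eqref{eq:tau2}/\eqref{eq:tau3}, or every step is a $d=3$ removal.
\item In the second case the number of $3$-edges strictly decreases. In the first case $Q$ strictly decreases, and the set of maximum ears stays fixed along the way.
\item When checking $Q$, note that the new $1$-edge $\{i-3,i-2\}$ can occupy a neighbour slot of a maximum ear ending at $i-6$. So $Q$ drops by at least $1$, not necessarily by $2$.
\end{itemize}

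\textbf{Comparison with the paper.} With these additions your argument is a valid alternative. It replaces the paper's bookkeeping of ear versus non-ear long edges with a trivially monotone count of long edges plus this phase argument. The paper instead handles all three cases in a single lexicographic comparison.
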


\begin{proof}
Recall that $E(M)$ denotes the set of ear edges of a matching~$M\in\cP_m$.
For any perfect matching~$M\in\cP_m$ we define a 5-tuple of non-negative integers
\begin{subequations}
\label{eq:phi}
\begin{equation}
\varphi(M)\coloneq (n_1,n_2,n_3,n_4,n_5)
\end{equation}
where
\begin{equation}
\begin{aligned}
n_1 &\coloneq \text{number of non-ear edges $e\in M\setminus E(M)$ with $\ell(e)\geq 5$}, \\
n_2 &\coloneq \text{number of ear edges $e\in E(M)$ with $\ell(e)\geq 5$}, \\
n_3 &\coloneq \text{number of ear edges $e\in E(M)$ with $\ell(e)\geq 5$ corresponding to open ears}, \\
n_4 &\coloneq \text{number of ear edges $e\in E(M)$ with $\ell(e)\geq 5$ corresponding to half-open ears}, \\
n_5 &\coloneq \text{number of ear edges $e\in E(M)$ with $\ell(e)=3$.}
\end{aligned}
\end{equation}
\end{subequations}

A cycle in a directed graph is called \defi{oriented} if all of its edges have the same orientation along the cycle, and \defi{mixed} otherwise.

We think of the pairs~$([M],[\tau(M)])$ for all $M\in\cP_m'$ as directed edges on the set of vertices~$\cP_m/{\sim}$; see Figure~\ref{fig:sptree}.
By definition, every vertex has exactly one outgoing edge, which rules out the existence of mixed cycles.
We will argue in the following that
\begin{equation}
\label{eq:phi-decrease}
\varphi(M)>\varphi(\tau(M))
\end{equation}
holds for all $M\in\cP_m'$, where the comparison operation~$>$ denotes lexicographic order.
This rules out the existence of oriented cycles, and proves the existence of a path to the unique equivalence class~$[M]$ with $\varphi(M)=(0,0,0,0,0)$, namely the class~$[M^1]$.
From these observations it follows that the directed edges~$([M],[\tau(M)])$ form a spanning tree.

\begin{figure}[t!]
\includegraphics[scale=0.7]{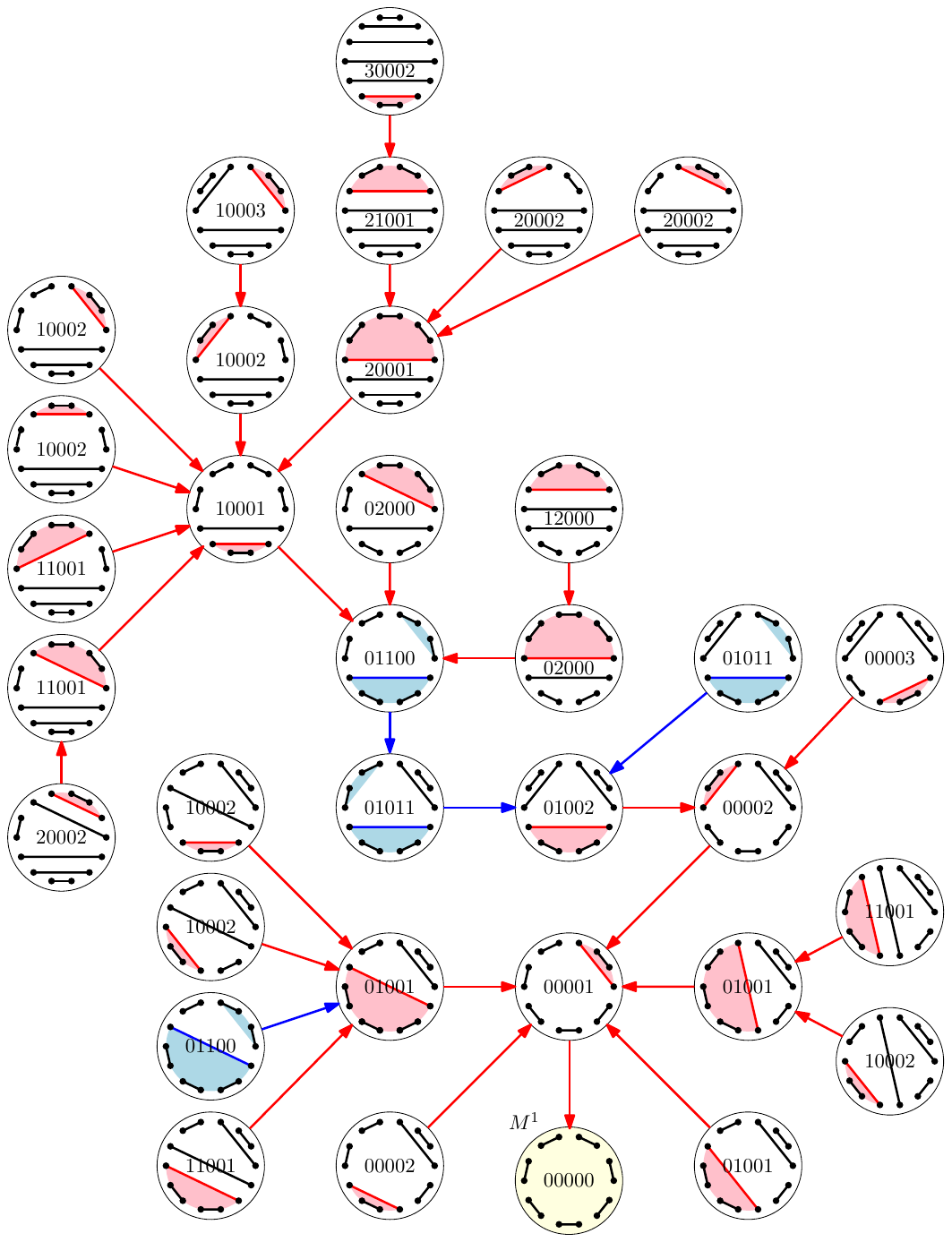}
\caption{Illustration of Lemma~\ref{lem:sptree} for $m=7$.
The red edges are closed ears, while the blue edges are open or half-open ears.
The 5-digit labels are as defined in~\eqref{eq:phi}, and they decrease lexicographically along directed edges of the tree.
The matching~$M^1$ with $\varphi(M^1)=00000$ is highlighted at the bottom.}
\label{fig:sptree}
\end{figure}

To prove~\eqref{eq:phi-decrease}, we distinguish the three cases for the ear~$e=(i,d)$ of maximum length chosen in the definition~\eqref{eq:tau}.
In case~\eqref{eq:tau1}, we either have $d=3$ or $d\geq 5$ and all such ears are closed.
In the former case, we have $\varphi(M)=(0,0,0,0,n_5)$ and $\varphi(M-e)=(0,0,0,0,n_5-1)$, so~\eqref{eq:phi-decrease} is proved.
In the latter case, we have $\varphi(M)=(n_1,n_2,0,0,n_5)$, as no open or half-open ears are present.
Removing the ear~$e$ in~$M$ to obtain~$M-e$ may turn a non-ear edge of length at least~5 into an ear edge, thus decreasing~$n_1$, or not, in which case $n_1$ is unchanged and $n_2$ decreases.
In both cases we have a lexicographic decrease.

In case~\eqref{eq:tau2}, we have $d\geq 5$ and the ear~$e$ is open or half-open, namely $\{i-4,i-3\},\{i-2,i-1\}\in M$.
Replacing the length~1 edges~$\{i-4,i-3\},\{i-2,i-1\}$ by $\{i-4,i-1\},\{i-3,i-2\}$ creates an additional ear edge of length~3 (increasing~$n_5$ by~1), and does not change the number of ear or non-ear edges of length at least~5 (so $n_1$ and~$n_2$ do not change).
However, it changes the status of the ear~$e$ from open to half-open or from half-open to closed, and possibly also for another ear (from an ear edge incident to the point~$i-5$ or~$i-7$), thus decreasing $n_3$ or~$n_4$, again a lexicographic decrease.

The argument in case~\eqref{eq:tau3} is symmetric.

This completes the proof of the lemma.
\end{proof}

\subsubsection{Edge-disjointness of the gluing cycles}
\label{sec:gluing}

By Lemma~\ref{lem:sptree}, the cycles of the factor~$\cC_m$ may be glued together to a single cycle (losing some vertices in the process) using suitable gluing cycles, as described by the spanning tree defined by~$\tau$.
In the next step, we need to make sure that these gluing cycles are edge-disjoint, so that the gluing operations do not interfere with each other.

For this we define, for any perfect matching~$M\in\cP_m'\cup\{M^1\}$, a set of intervals~$I(M)$ of integers.
Each interval in~$I(M)$ describes a subpath of the cycle~$C(M)$ (by a sequence of consecutive vertices) whose edges will be deleted when taking the symmetric difference with the gluing cycles that join~$C(M)$ to the other cycles of the factor~$\cC_m$, as described by the spanning tree.
Recall that for a gluing cycle~$D(M,e=(i,d))$, these intervals on the cycles~$C(M)$ and~$C(M-e)$ are $[i+1,i+d]$ and~$[i,i+d+1]$, respectively (see Figure~\ref{fig:ear}~(d)).
We choose these intervals so that their starting points are minimal ($\geq 1$).

Formally, the set~$I(M)$ is defined as follows.
Let $\Gamma(M)$ be the set of representatives of equivalence classes that are neighbors of~$[M]$ in the spanning tree, i.e., $\Gamma(M)\coloneq \{M'\in\cP_m'\mid [\tau(M')]=[M] \text{ or } [\tau(M)]=[M']\}$.

For each $M'\in\Gamma(M)$, we distinguish two cases:

Case~1 (incoming edges in the spanning tree): $[\tau(M')]=[M]$.

Case~1a: If $\tau(M')=M'-e'$ for some ear~$e'$ of~$M'$ (this is case~\eqref{eq:tau1} in the definition of~$\tau$), then we let $M''\in[M']$ and the corresponding ear~$e''=(i,d)$ of~$M''$ be such that $M''-e''=M$ and $i\in[1,\ord(M)]$, and we add $[i,i+d+1]$ to the set~$I(M)$.

Case~1b: If $[M-f]=[M']$ for some ear~$f=(i,3)$ of~$M$ (cases~\eqref{eq:tau2} and~\eqref{eq:tau3}), we choose this ear so that~$i\in[1,\ord(M)]$, and we add $[i+1,i+3]$ to the set~$I(M)$. 

Case~2 (the outgoing edge in the spanning tree): $[\tau(M)]=[M']$.

Case~2a: If $\tau(M)=M-e$ for some ear~$e=(i,d)$ of~$M$, we choose this ear so that~$i\in[1,\ord(M)]$, and we add~$[i+1,i+d]$ to the set~$I(M)$.

Case~2b: If $[M'-f']=[M]$ for some ear~$f'$ of length~3 of~$M'$, then we let $M''\in[M']$ and the corresponding ear~$f''=(i,3)$ of~$M''$ be such that $M''-f''=M$ and $i\in[1,\ord(M)]$, and we add $[i,i+4]$ to the set~$I(M)$.

\begin{lemma}
\label{lem:intervals}
We have $I(M^1)=\{[1,5]\}$.
Furthermore, for any $M\in\cP_m'$ each interval~$[i,j]\in I(M)$ satisfies $i\in [1,\ord(M)+1]$ and $j\in[1,2\ord(M)]$, and any pair~$[i,i+1]$ for $i\in[1,2\ord(M)-1]$ is contained in at most 7 intervals of~$I(M)$.
\end{lemma}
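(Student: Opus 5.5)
The lemma has three parts, and I would prove them in order: the computation of $I(M^1)$, the range bounds for the endpoints, and the overlap count. Throughout I would work with the explicit descriptions of the intervals in Cases~1a, 1b, 2a and~2b.

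\emph{Computing $I(M^1)$.} Since $M^1\notin\cP_m'$, only incoming edges (Case~1) contribute. Case~1b is impossible because $M^1$ has no ear. In Case~1a we need $M''-e''=M^1$, so $M''$ has exactly one edge of length $d>1$ and all other edges of length~1.
\begin{itemize}
\item If $d\geq 5$, the complementary side of that edge contains at least $n'-d-1\geq m-1\geq 4$ points, all covered by 1-edges. Hence the ear is open, $\tau(M'')$ is defined by \eqref{eq:tau2} or~\eqref{eq:tau3}, and $[\tau(M'')]\neq[M^1]$.
\item Therefore only the class with a single 3-edge maps to $[M^1]$. Since $\ord(M^1)=2$, the minimal admissible choice is $i=1$, $d=3$, which yields the single interval $[1,5]$.
\end{itemize}

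\emph{Endpoint bounds.} In all four cases the left endpoint is $i$ or $i+1$ with $i\in[1,\ord(M)]$, which gives the first bound immediately. For the right endpoint, the interval has length at most $d+2$, where $d$ is the relevant ear length ($d=3$ in Cases~1b and~2b). So it suffices to show $\ord(M)\geq d+1$, or $\ord(M)\geq d+2$ where the interval is longer.
\begin{itemize}
\item In Cases~1a and~2a, the matching $M$ (or the ear-removed $M$) contains a run of $d+1$ consecutive points carrying a prescribed local pattern: an ear edge of length $d$, or $(d+1)/2$ consecutive 1-edges flanked by a non-1 edge.
\item Rotation by $\ord(M)$ fixes $M$. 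If $\ord(M)\leq d$, this run would be mapped onto an overlapping run, which forces either a crossing or all edges to have length~1, that is $M\in[M^1]$.
\item In Cases~1b and~2b only $\ord(M)\geq 4$ is needed. I would derive this from $\ord(M)\geq 3$, the fact that $\ord(M)\mid n'$, and a direct check that a non-crossing matching with order~3 cannot contain the 3-ear configuration involved.
\end{itemize}
This step needs some care with the exact offsets, but it is elementary.

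\emph{Overlap bound of~7.} There is exactly one outgoing interval (Case~2a or~2b), which contributes~1. For the incoming intervals I would associate each one with a position in $M$:
\begin{itemize}
\item In Case~1b, it corresponds to a 3-ear $(i,3)$ of $M$ whose removal lands in the class of an $M'$ with $\tau(M')$ given by \eqref{eq:tau2} or~\eqref{eq:tau3}.
\item In Case~1a, it corresponds to a block $[i,i+d]$ of consecutive 1-edges of $M$, such that re-inserting an ear there produces a matching $M''$ whose chosen maximum ear is closed (or has length~3).
\end{itemize}
The closedness condition is the key constraint. It says that just outside the block, $M$ does not have two consecutive 1-edges on either side. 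Consequently, the block endpoints must sit essentially at the ends of maximal runs of 1-edges of $M$. A fixed pair $[i,i+1]$ is therefore covered by only a bounded number of such blocks, for instance those starting or ending near the ends of the run containing $i$. Similarly, only a bounded number of 3-ears of $M$ have their interval $[i+1,i+3]$ covering the pair. Adding these contributions with care (say at most 2 from Case~1b, at most 4 from Case~1a, and 1 from Case~2) should give~7.

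I expect this last count to be the main obstacle: pinning down exactly which block lengths $d$ can occur at a given position. My first attempt would be a case analysis on the length of the maximal run of 1-edges containing the pair, using the maximality and closedness conditions in the definition of $\tau$.
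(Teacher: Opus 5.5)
Your computation of $I(M^1)$ is correct, and more explicit than the paper, which only asserts that exactly one class maps to $[M^1]$. Your endpoint bounds follow the paper's periodicity/crossing argument. Two small corrections there:
\begin{itemize}
\item In Case~2a you only need $\ord(M)\ge d$. Your stronger target $\ord(M)\ge d+1$ is false when the rotation fixes the ear edge: for $m=3$, $M=\{\{1,4\},\{2,3\},\{5,6\}\}$ has $\ord(M)=3=d$ and falls under Case~2a.
\item In Case~1b only $\ord(M)\ge 3$ is needed. The order-$3$ exclusion is needed only in Case~2b, where $M$ contains $\{i,i+1\},\{i+2,i+3\}$.
\end{itemize}

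The genuine gap is in the overlap count. It is not just a matter of doing the case analysis carefully: your split ``at most $2$ from Case~1b, at most $4$ from Case~1a, $1$ from Case~2'' is false. Your Case~1a bound relies on closedness pinning the blocks to the ends of maximal runs. But Case~1a also contains the insertions with $d=3$. There $M''$ has maximum ear length~$3$, $\tau$ is given by \eqref{eq:tau1} with no closedness requirement, and the twin may sit anywhere inside a run.

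For example, let $m\ge 12$ and $M=\{\{1,12\},\{2,3\},\{4,5\},\ldots,\{10,11\},\{13,14\},\ldots,\{n'-1,n'\}\}$, so $\ord(M)=n'$.
\begin{itemize}
\item Inserting the closed ears $(2,9),(4,7),(2,7),(4,5)$ yields the Case-1a intervals $[2,12],[4,12],[2,10],[4,10]$.
\item Inserting a twin at $4$ or at $6$ yields $[4,8]$ and $[6,10]$. In these two matchings the twin is the only ear, so $\tau$ is forced.
\end{itemize}
All six intervals contain the pair $[6,7]$. On the other hand, Case~1b contributes at most one interval per pair. So the true case-wise maxima are $1,6,1$, which sum to $8$, and no case-by-case count can give $7$.

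The missing idea is to group the intervals by the configuration they occupy in $M$, not by the case that produced them; this is what the paper does.
\begin{itemize}
\item Cases~1b and~2a both come from ears $(i,d)$ of $M$, with interval $[i+1,i+d]$. Distinct ears have disjoint arcs, so at most one of these intervals contains a given pair.
\item Cases~2b and~1a with $d=3$ both come from two consecutive $1$-edges $\{i,i+1\},\{i+2,i+3\}$, with interval $[i,i+4]$. Two admissible starts in $[p-3,p]$ cannot differ by $1$ or $3$, so at most two of these contain $[p,p+1]$.
\item Only Case~1a with $d\ge 5$ carries the closedness condition. There your argument (offset $0$ or $1$ at each end of a maximal run) gives at most four.
\end{itemize}
This yields $1+2+4=7$.
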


The proof of Lemma~\ref{lem:intervals} is illustrated in Figure~\ref{fig:intervals}.

\begin{figure}[t!]
\includegraphics{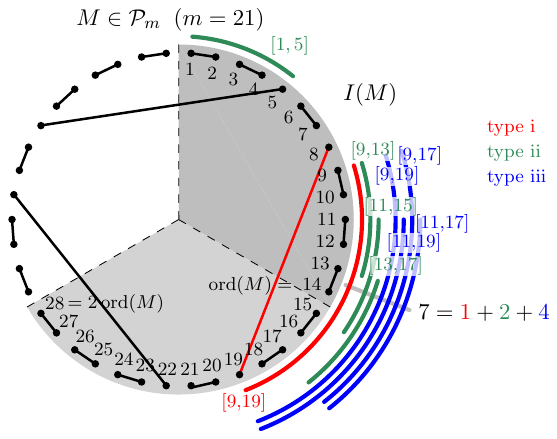}
\caption{Illustration of Lemma~\ref{lem:intervals}.}
\label{fig:intervals}
\end{figure}

\begin{proof}
We have $|\Gamma(M^1)|=1$, namely, there is exactly one matching~$M'\in\cP_m'$ with $[\tau(M')]=[M^1]$.
Its equivalence class contains the matching~$M''\coloneq \{\{1,4\},\{2,3\},\{5,6\},\ldots,\{n'-1,n'\}\}$ with the ear~$e''=(1,3)$ (case~1a), resulting in~$I(M)=\{[1,1+3+1]\}=\{[1,5]\}$.

For the rest of the proof consider a perfect matching~$M\in\cP_m'$.
Each interval in~$I(M)$ is associated with one of the following configurations in~$M$:
\begin{itemize}[topsep=1mm,leftmargin=4mm]
\item Type~i (from cases~1b and~2a of the definition): An ear~$(i,d)$ of~$M$ with $i\in[1,\ord(M)]$ contributes the interval~$[i+1,i+d]$.
\item Type~ii (from case~2b and case~1a when $d=3$): Two consecutive 1-edges~$\{i,i+1\},\{i+2,i+3\}$ in~$M$ with $i\in[1,\ord(M)]$ contribute the interval~$[i,i+4]$.
\item Type~iii (from case~1a when $d\geq 5$): Three or more consecutive 1-edges~$\{i,i+1\},\{i+2,i+3\},\ldots,\{j-1,j\}$, with $i\in[1,\ord(M)]$ with the extra condition that $\{i-4,i-3\},\{i-2,i-1\}\notin M$ and~$\{j+1,j+2\},\{j+3,j+4\}\notin M$ (recall from~\eqref{eq:tau1} the requirement for the ear of length at least~5 to be closed), contribute the interval~$[i,j+1]$.
\end{itemize}

Clearly, for each of the three types of intervals, the starting point of the interval lies within $[1,\ord(M)+1]$.
For type~i the ending point is~$i+d$ where $i\in[1,\ord(M)]$.
We will argue that $d<\ord(M)$, which implies that $i+d\leq 2\ord(M)$.
Indeed, if $\ord(M)=n'$ we have $d\leq m=n'/2<\ord(M)$, and if $\ord(M)<n'$ we also have $d<\ord(M)$, otherwise the edge~$\{i,i+d\}$ in~$M$ and the rotated edge~$\{i+\ord(M),i+d+\ord(M)\}$ would cross.
For type~ii the ending point is~$i+4$ where $i\in[1,\ord(M)]$.
As $M\notin[M^1]$ we have $\ord(M)\geq 3$.
Furthermore, the case $\ord(M)=3$ is ruled out by the presence of the edges~$\{i,i+1\},\{i+2,i+3\}$.
Consequently, we have $\ord(M)\geq 4$ and thus $i+4\leq 2\ord(M)$.
For type~iii the ending point is~$j+1$ and there are 1-edges on all points~$i,i+1,\ldots,j$ where $i\in[1,\ord(M)]$.
As $M$ has not only 1-edges, we must have $j<2\ord(M)$ and thus $j+1\leq 2\ord(M)$.

Consider any pair~$[i,i+1]$ with $i\in[1,2\ord(M)-1]$.
It is easy to check the following; see Figure~\ref{fig:intervals}:
At most one interval of type~i contains this pair.
At most two intervals of type~ii contain this pair.
At most four intervals of type~iii contain this pair.
We thus have at most 1+2+4=7 containments.
\end{proof}

\subsubsection{Proof of Theorem~\ref{thm:GO-long}}

\begin{proof}[Proof of Theorem~\ref{thm:GO-long}]
To construct a long cycle in the flip graph, we start with the cycle factor~$\cC_m$ defined in~\eqref{eq:cfac}.
We glue these cycles together, by taking the symmetric difference with suitable pairwise edge-disjoint gluing cycles, along the spanning tree defined by the mapping~$\tau$ from~\eqref{eq:tau} (recall Lemma~\ref{lem:sptree}), losing some vertices in the process.
In the end we bound the number of vertices that are not included in the cycle.

For $m\leq 6$, the statement of the theorem is void, so we will assume that $m\geq 7$, i.e., $n\geq 15$, for the rest of the proof.

\begin{figure}[t!]
\makebox[0cm]{ 
\includegraphics[page=2,scale=0.55]{cfac}
}
\caption{Shifting of gluing cycles applied in the proof of Theorem~\ref{thm:GO-long}.
The example continues the one from Figure~\ref{fig:cfac}, showing the first two cycles from the factor displayed there at the outer perimeter, and the gluing cycles between them (in red).
All 9 cyclic rotations of one matching on each of the two cycles are highlighted.}
\label{fig:shift}
\end{figure}

Recall Figure~\ref{fig:ear} for the following definitions.
Consider a perfect matching $M\in\cP_m'$ with an ear~$e=(i,d)$, and define $M'\coloneq c_{i+1}(M)$, $M^-\coloneq M-e$ and $N''\coloneq c_i(M^-)$.
The gluing cycle~$D(M,e)$ intersects with~$C(M)$ in the vertices~$f^s(M')$ for~$s=0,\ldots,d-1$, and it intersects with~$C(M^-)$ in the vertices~$f^s(N'')$ for $s=0,\ldots,d+1$.
As mentioned before, taking the symmetric difference of the cycle~$D(M,e)$ with~$C(M)$ and~$C(M^-)$ joins those two cycles together, at the cost of losing~$2d-2$ vertices.
The crucial observation is that there are actually $n$ distinct cycles that achieve the same gluing effect, obtained by cyclically rotating the involved matchings.
Specifically, we let $D^j\coloneq r^j(D(M,e))$ for $j=0,\ldots,n-1$ be the cycle obtained from~$D(M,e)$ by rotating all involved matchings by $j$ steps; see Figure~\ref{fig:shift}.
Note that from these $n$ sequences of matchings on~$C(M)$ (and $C(M^-)$, respectively), $n-1=n'$ many correspond to gluing cycles~$D(M^j,e^j)$ where $(M^j,e^j)\coloneq r^j(M,e)$ for $j=0,\ldots,n-2$, but one of them does not (drawn dashed in the figure), namely the one whose first matching has the vertex~1 (vertex~$n$) unmatched.
Then the cycle~$D^j$ intersects with~$C(M)$ in the vertices~$f^{j\cdot (n-1)+s}(M')$ for~$s=0,\ldots,d-1$, and it intersects with~$C(M^-)$ in the vertices~$f^{j\cdot (n-1)+s}(N'')$ for $s=0,\ldots,d+1$.
As the cycle~$C(M)$ has length~$n\ord(M)$ (recall Lemma~\ref{lem:cfac}), these are exactly the vertices~$f^{k\cdot\ord(M)+s}(M')$ for $s=0,\ldots,d-1$ and some $k\in\{0,\ldots,n-1\}$ (and $k$ ranges over all values from this set, as $j$ ranges from $0,\ldots,n-1$).
Similarly, as $C(M^-)$ has length~$n\ord(M^-)$, these are exactly the vertices~$f^{k\cdot\ord(M^-)+s}(N'')$ for $s=0,\ldots,d-1$ and some $k\in\{0,\ldots,n-1\}$.
Therefore, selecting one of the cycles~$D^j$, $j=0,\ldots,n-1$, to glue the cycles~$C(M)$ and~$C(M^-)$ together corresponds to shifting the subpath in which it intersects with~$C(M)$ and with~$C(M^-)$, respectively, by multiples of~$\ord(M)$ or~$\ord(M^-)$, respectively.

For any perfect matching~$M\in\cP_m'$, one copy of the subpaths in which gluing cycles intersect with~$C(M)$ is recorded by the set of intervals~$I(M)$ defined in Section~\ref{sec:gluing}.
By Lemma~\ref{lem:intervals}, whenever two such subpaths share an edge (which does not happen if $M=M^1$), applying a shift by a multiple of~$2\ord(M)$ makes them edge-disjoint.
By the same lemma, at most 7 subpaths overlap, and since $n\geq 15$ we have that the cycle length satisfies $n \ord(M)\geq 7\cdot 2\ord(M)=14\ord(M)$, so all subpaths can be shifted appropriately to be edge-disjoint.
Note that these shifting amounts are fixed cycle after cycle, following an arbitrary traversal of the spanning tree defined by~$\tau$, so that at each step, at most one subpath is fixed by previous decisions, and all others can be shifted freely.
We write~$L$ for the resulting cycle, obtained by taking the symmetric difference of the edge sets of the cycle factor~$\cC_m$, and the appropriately shifted (and thus pairwise edge-disjoint) gluing cycles.

To complete the proof, we bound the number of vertices in the flip graph not contained in~$L$.
For this, note that for any perfect matching~$M\in\cP_m'$, by Lemma~\ref{lem:intervals}, the total length of all intervals in~$I(M)$ is at most $7\cdot 2\ord(M)=14\ord(M)$.
Consequently, the total number of vertices from the cycle~$C(M)$ not included in~$L$ is at most $14\ord(M)$.
As the length of the cycle~$C(M)$ is~$n\ord(M)$, the cycle~$L$ misses at most a $14/n$-fraction of vertices from the cycle~$C(M)$.
As this bound applies uniformly to every cycle from the factor, it also applies to the entire cycle~$L$.

The proof of the theorem is complete.
\end{proof}

\torsten{The bound $14/n$ can probably be improved. Currently the bound is established for every cycle. However, looking at Figure~\ref{fig:sptree} it seems to be much better to do the estimation over all edges of the entire spanning tree (because in most cases, the joining cycle is short/constant, and the cycle from the factor has quadratic length (as $\ord(M)=n'$ for most matchings) so only much fewer vertices are missed). Can we thus prove only a $\Theta(1/n^2)$-fraction of missed vertices?}

\subsubsection{Proof of Theorem~\ref{thm:GO-miss}}

This proof reuses some of the terminology introduced in the previous sections, specifically the definition of the mapping~$f$ and of the cycles~$C(M)$ for perfect matchings~$M\in \cP_m$ given after Lemma~\ref{lem:cfac}.

\begin{proof}[Proof of Theorem~\ref{thm:GO-miss}]
We refer to a pair of edges~$\{i,i+3\},\{i+1,i+2\}$ of lengths~3 and~1 as a \defi{twin}.
For any bitstring $\alpha\in\{0,1\}^a$, we define a perfect matching~$M_\alpha\in\cP_m$ as follows; see Figure~\ref{fig:twin1}:
\begin{enumerate}[topsep=1mm,leftmargin=7mm,label=(\arabic*)]
\item on the first 18 points $1,\ldots,18$, first place two 1-edges, then a twin, then a 1-edge, then a twin, then two 1-edges (blue edges in the figure);
\item for each $i=1,\ldots,a$, on the next 10 points $19+10(i-1),\ldots,28+10(i-1)$, place a 1-edge, then a twin that is followed by another 1-edge, if $\alpha_i=0$, or preceded by another 1-edge if $\alpha_i=1$, and then another 1-edge (red edges in the figure);
\item on the remaining $2b$ points $19+10a,\ldots,18+10a+2b$, place $b$ many 1-edges (green edges in the figure).
\end{enumerate}
Note that the parameter~$\alpha$ controls the placement of the twins in~$M_\alpha$ added in step~(2).
On the other hand, the twins added in step~(1) are independent of~$\alpha$, and they serve as a unique reference to ensure disjointness.
The $b\leq 4$ many 1-edges added in step~(3) are simply used to fill up the remaining pairs of points that are not enough to create another block of 5 pairs in the second step. 

\begin{figure}[t!]
\includegraphics[page=1,scale=0.7]{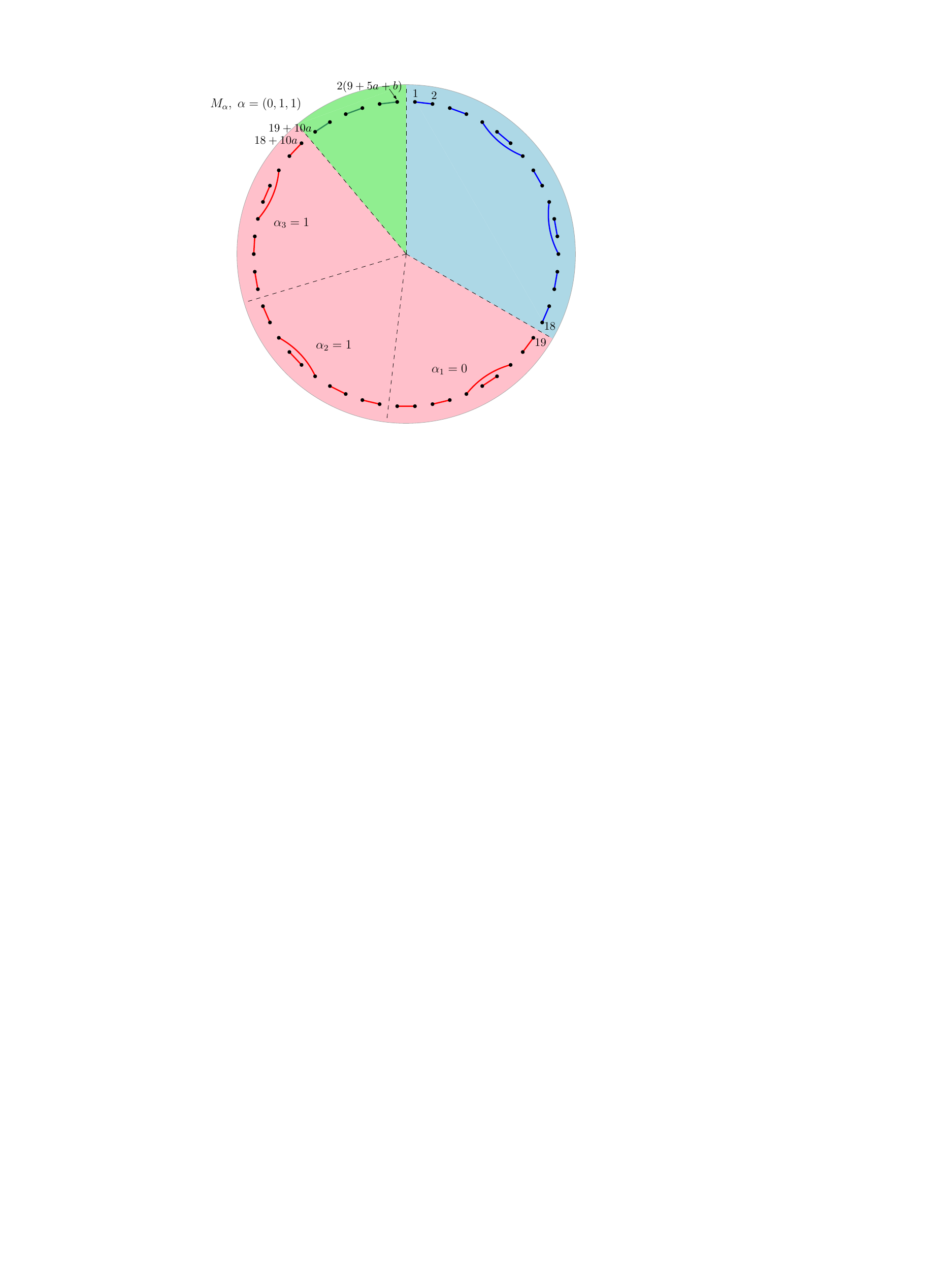}
\caption{Definition of the matchings~$M_\alpha$.
The three colors correspond to the three steps of the construction.}
\label{fig:twin1}
\end{figure}

For any matching~$M_\alpha$, let $M'$ be a matching obtained from~$M_\alpha$ by replacing one of its twins by two 1-edges, an operation that we refer to as \defi{twin removal}.
Using that the two twins added in step~(1) are the unique twins separated by only a single 1-edge, one can check the following:
Let $\alpha,\beta\in\{0,1\}^a$ be such that $\alpha\neq \beta$, and let $M_\alpha'$ and~$M_\beta'$ be obtained from~$M_\alpha$ and~$M_\beta$, respectively, by a twin removal.
Then we have
\begin{subequations}
\label{eq:diff-cycles}
\begin{equation}
[M_\alpha]\neq [M_\beta], [M_\alpha]\neq [M_\beta'], \text{ and } [M_\alpha']\neq [M_\beta].
\end{equation}
Furthermore, we have
\begin{equation}
[M_\alpha']\neq [M_\beta'], \text{ unless}
\end{equation}
\end{subequations}
$\alpha$ and $\beta$ differ in a single bit, and $M_\alpha'$ and~$M_\beta'$ are obtained from~$M_\alpha$ and~$M_\beta$, respectively, by removing the corresponding twin in which the two matchings differ.

\begin{figure}[t!]
\makebox[0cm]{ 
\includegraphics[page=2,scale=0.8]{twin}
}
\caption{Illustration of the proof of Theorem~\ref{thm:GO-miss}.
The gray areas represent an arbitrary fixed matching on the remaining points.
We argue in the proof that the gray edges in the flip graph cannot be used, and thus the blue edges must be used.}
\label{fig:twin2}
\end{figure}

For any perfect matching~$M_\alpha$, consider the cycle~$C(M_\alpha)$.
Each matching~$M'\in C(M_\alpha)$ is of one of the following three types; see Figure~\ref{fig:twin2}:
\begin{itemize}[topsep=1mm,leftmargin=4mm]
\item Type~i: $M'$ has a 2-edge $\{i-1,i+1\}$, where $i$ is the unmatched point, in which case we have $\deg(M')=2$ in the flip graph.
\item Type~ii: $M'$ has no 2-edge but a 4-edge $\{i-1,i+3\}$, where $i$ or $i+2$ is the unmatched point, in which case we have $\deg(M')=4$.
\item Type~iii: $M'$ has neither 2- nor 4-edges, then both neighbors of~$M'$ on~$C(M_\alpha)$ are of type~i or~ii.
\end{itemize}

We consider a matching~$M^0\in C(M_\alpha)$ of type~ii with a 4-edge $\{i-1,i+3\}$, and assume w.l.o.g.\ that the unmatched point is~$i$.
Note that from point~$i$, the 4-edge~$\{i-1,i+3\}$ and the 1-edge~$\{i+1,i+2\}$ are visible.
Then $N^0\coloneq u(M^0,i+3)$ is one of the four neighbors of~$M^0$ and lies on the cycle~$C(M_\alpha')$ where $M_\alpha'$ is obtained from~$M_\alpha$ by removing one of its twins.
The other three neighbors of~$M^0$ are $M^{-1}\coloneq f^{-1}(M^0)=u(M^0,i-1)$, $M^1\coloneq f(M^0)=u(M^0,i+1)$ and~$M^2\coloneq f^2(M^0)=u(M^0,i+2)$.

We observe the following: Any path~$P$ in the flip graph that has its first and last vertex neither on~$C(M_\alpha)$ nor on~$C(M_\alpha')$ and contains all vertices of these two cycles (in arbitrary order, i.e., not necessarily in the order along the cycles), must not use the edges~$\{M^0,N^0\}$ and~$\{M^0,M^2\}$.
Indeed, the edge~$\{M^0,N^0\}$ cannot be used, as both of the two neighboring vertices~$N^{-1}\coloneq f^{-1}(N^0)$ and~$N^1\coloneq f(N^0)$ on the cycle~$C(M_\alpha')$ have degree~2 (any twin is preceded and followed by at least one 1-edge) and thus each of their two incident edges must be used.
The edge~$\{M^0,M^2\}$ cannot be used, as the neighboring vertex~$M^1$ on the cycle~$C(M_\alpha)$ has degree~2 and thus its two incident edges must be used, and the edge~$\{M^0,M^2\}$ would create a triangle.

Applying this argument for all vertices of type~ii on~$C(M_\alpha)$ would imply that~$P$ must contain all edges of the cycle~$C(M_\alpha)$, which is impossible.
We conclude that any path~$P$ in the flip graph that has its first and last vertex neither on~$C(M_\alpha)$ nor on~$C(M_\alpha')$ for any of the matchings~$M'$ obtained from~$M_\alpha$ by twin removal misses at least one vertex from one of these cycles.

Using~\eqref{eq:diff-cycles}, and the fact there are $2^a=2^{(m-b-9)/5}$ choices for $\alpha\in\{0,1\}^a$, we conclude that any path misses at least $(2^a-4)/2=2^{a-1}-2=2^{(m-b-14)/5}-2\geq 2^{(m-18)/5}-2\geq 2^{m/5}/16$ many vertices.
\end{proof}

\subsubsection{General point sets}

For point sets in general position, the bounds provided by Theorem~\ref{thm:GO-miss} can be improved, and the arguments simplified considerably.

\begin{figure}[t!]
\includegraphics[page=2,scale=0.7]{G57geom}
\caption{10-cycle in flip graph of non-crossing matchings on 11 points induced by 5 degree-2 vertices and their incident edges (cf. Figure~\ref{fig:G5geom}).
On the inner 6 points, an arbitrary perfect matching can be prescribed.}
\label{fig:G11geom}
\end{figure}

\begin{theorem}
\label{thm:GOP}
Let $m\geq 5$ and $n\coloneq 2m+1$.
There is a set of $n$ points such that in the flip graph of non-crossing almost-perfect matchings under 1-flips, any path misses at least $C_{m-2}-2=\Theta(4^m/m^{3/2})$ many vertices.
In particular, there is no Hamilton path (nor cycle).
\end{theorem}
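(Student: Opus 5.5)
We build the point set from the configuration in Figure~\ref{fig:G11geom}: five points in convex position on an outer pentagon-like ring, and inside them $n-5=2(m-2)$ points, also in convex position and placed so that every straight segment between two inner points stays inside the outer ring. We choose the outer points $o_1,\ldots,o_5$ so that inner points can only see outer points through narrow channels. Concretely, the outer points should be placed far away and arranged so that, in any almost-perfect matching with a prescribed outer structure, the only 1-flips that change the outer structure are the ones depicted.

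The key gadget is the following. Fix an arbitrary non-crossing perfect matching $Q$ on the $2(m-2)$ inner points; there are $C_{m-2}$ choices. Together with $Q$, consider the almost-perfect matchings that use two edges among the five outer points and leave one outer point unmatched. There are $5$ such matchings, one for each unmatched outer point, and we want each of them to have degree~$2$ in the flip graph. This means the unmatched outer point $o_i$ should see exactly one matching edge, namely the outer edge $\{o_{i-1},o_{i+1}\}$ that shields it, while all inner edges of~$Q$ remain hidden. Flipping this edge yields two neighbours, and these neighbours are again of the same kind, but now with a single outer edge of a special form. Following Figure~\ref{fig:G11geom}, these $5$ degree-$2$ vertices and their incident edges then form a $10$-cycle~$Z_Q$. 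The geometric step I expect to be the main obstacle is to choose coordinates that enforce two things simultaneously. First, for every $Q$ and every $i$, the triangle spanned by $o_i,o_{i-1},o_{i+1}$ must be the only empty visibility triangle. Second, all relevant segments between outer and inner points must be non-crossing with the outer edges exactly as in the picture, so that no inner edge is visible from $o_i$. To achieve this, I would place the outer points on a large regular pentagon and shrink the inner convex set towards the centre, so that the segment $\{o_{i-1},o_{i+1}\}$ separates $o_i$ from the whole inner region. Points must also be in general position, which is achieved by a small perturbation.

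Given these gadgets, the counting is a standard degree-2 argument. A Hamilton path must use both edges at every degree-$2$ vertex that is not an endpoint of the path. If no vertex of~$Z_Q$ is an endpoint, the path would contain all $10$ edges of the cycle~$Z_Q$, which is impossible unless $Z_Q$ is the entire graph. It follows that any path visiting all vertices of~$Z_Q$ must have an endpoint on~$Z_Q$. The cycles $Z_Q$ for distinct $Q$ are vertex-disjoint, since their inner matchings differ, and a path has only two endpoints. Hence, among the $C_{m-2}$ cycles, at least $C_{m-2}-2$ are not fully covered by the path, and so at least $C_{m-2}-2$ vertices are missed. Finally, $C_{m-2}=\Theta(4^m/m^{3/2})$ follows from the Catalan asymptotics recalled in Section~\ref{sec:prelim}. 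The assumption $m\geq 5$ ensures that $C_{m-2}-2>0$, which gives non-Hamiltonicity.
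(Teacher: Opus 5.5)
Your proposal is correct and follows essentially the paper's proof. The ingredients match: an outer pentagon with the inner convex set shrunk into the central region bounded by the diagonals, one $10$-cycle of degree-$2$ vertices and their incident edges for each of the $C_{m-2}$ inner matchings $Q$, and the observation that a path covers such a cycle only if it has an endpoint on it. One slip to fix: for fixed $Q$ there are $10$ outer configurations (two per unmatched $o_i$), not $5$. Only the $5$ containing the shielding diagonal $\{o_{i-1},o_{i+1}\}$ have degree~$2$, and their flip-neighbours are the other $5$ (two hull sides), not matchings ``of the same kind''. This alternation is what makes the union a $10$-cycle.
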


\begin{proof}
We place 5 points~$1,2,3,4,5$ in convex position, and the remaining $n-5$ points in convex position within the 5-gon that sits inside each of the triangles formed by three outer points~$i,i+1,i+3$ for $i=1,\ldots,5$ (modulo~5); see Figure~\ref{fig:G11geom}.
Then for any fixed choice of a perfect matching on the inner~$n-5$ points, there are 10 ways to match the outer 5 points (one of them remains unmatched), and among those 10 matchings, 5 have degree~2 in the flip graph, so together with their incident edges they form a 10-cycle in the flip graph.
The number of those 10-cycles is given by the number of perfect matchings on~$n-5$ points, namely $C_{m-2}$.
Clearly, any path misses at least one vertex from all but at most two of these cycles.
\end{proof}

\bibliographystyle{alpha}
\bibliography{refs}

\end{document}